\newtheorem{theorem}{Theorem}[section]
\newtheorem{lemma}[theorem]{Lemma}
\newtheorem{proposition}[theorem]{Proposition}
\theoremstyle{definition}
\newtheorem{definition}[theorem]{Definition}
\theoremstyle{remark}
\newtheorem{remark}[theorem]{Remark}
\numberwithin{equation}{section}
\begin{document}

\title[Composition operators on weighted Hardy spaces]{Composition operators on weighted Hardy spaces of polynomial growth}

\author{Bingzhe Hou}
\address{Bingzhe Hou, School of Mathematics, Jilin University, 130012, Changchun, P. R. China}
\email{houbz@jlu.edu.cn}

\author{Chunlan Jiang}
\address{Chunlan Jiang, Department of Mathematics, Hebei Normal University, 050016, Shijiazhuang, P. R. China}
\email{cljiang@hebtu.edu.cn}

\date{}
\subjclass[2010]{Primary 47B33, 47A30, 47A25, 47A53; Secondary 47B38, 46E20.}
\keywords{Composition operators, weighted Hardy spaces of polynomial growth, norms, spectra, (semi-)Fredholm operators.}
\thanks{}
\begin{abstract}
In the present paper, we study the composition operators acting on weighted Hardy spaces of polynomial growth, which are concerned with norms, spectra and (semi-)Fredholmness. Firstly, we estimate the norms of the composition operators with symbols of disk automorphisms. Secondly, we discuss the spectra of the composition operators with symbols of disk automorphisms. In particular, it is proven of that the spectrum of a composition operator with symbol of any parabolic disk automorphism is always the unit circle. Thirdly, we consider the Fredholmness of the composition operator $C_{\varphi}$ with symbol $\varphi$ which is an analytic self-map on the closed unit disk. We prove that $C_{\varphi}$ acting on a weighted Hardy space of polynomial growth has closed range (semi-Fredholmness) if and only if $\varphi$ is a finite Blaschke product. Furthermore, it is obtained that $C_{\varphi}$ is Fredholm if and only if $\varphi$ is a disk automorphism.
\end{abstract}
\maketitle
\tableofcontents

\section{Introduction}

Denote by ${\rm Hol}(\mathbb{D})$ and ${\rm Hol}(\overline{\mathbb{D}})$ the space of all analytic functions on the open unit disk $\mathbb{D}$ and the closed unit disk $\overline{\mathbb{D}}$, respectively. Denote by ${\rm Aut}(\mathbb{D})$ the analytic automorphism group on $\mathbb{D}$, which is the set of all analytic bijections from $\mathbb{D}$ to itself. As well known, each $\varphi\in{\rm Aut}(\mathbb{D})$, called disk automorphism or M\"{o}bius transformation, could be written as the following form
$$
\varphi(z)={\rm e}^{\mathbf{i}\theta}\cdot \frac{z_0-z}{1-\overline{z_0}z}, \ \ \ \text{for some} \ z_0\in\mathbb{D}.
$$
For any $f\in {\rm Hol}(\mathbb{D})$, denote the Taylor expansion of $f(z)$ by
$$
f(z)=\sum\limits_{k=0}^{\infty}\widehat{f}(k)z^k.
$$
Let $w=\{w_k\}_{k=1}^{\infty}$ be a sequence of positive numbers. Write $\beta=\{\beta_k\}_{k=0}^{\infty}$,
$$
\beta_0=1, \ \ \text{and} \ \ \beta_k=\prod\limits_{j=1}^{k}w_{j}, \ \ \text{for} \ k\geq1.
$$
The weighted Hardy space $H^2_{\beta}$ induced by the weight sequence $w$ (or $\beta$) is defined by
$$
H^2_{\beta}=\{f(z)=\sum\limits_{k=0}^{\infty}\widehat{f}(k)z^k; \ \sum\limits_{k=0}^{\infty}|\widehat{f}(k)|^{2}{\beta}_k^2<\infty\}.
$$
Moreover, the weighted Hardy space $H^2_{\beta}$ is a complex separable Hilbert space, on which the inner product is defined by,  for any $f,g \in H^2_{\beta}$
$$
\langle  f, g\rangle_{H^2_{\beta}}=\sum\limits_{k=0}^{\infty}{\beta}_k^2\overline{\widehat{g}(k)}\widehat{f}(k)
$$
Then, any $f\in H^2_{\beta}$ equips the following norm
$$
\|f(z)\|_{H^2_{\beta}}=\sqrt{\langle  f, f\rangle_{H^2_{\beta}}}=\sqrt{\sum\limits_{k=0}^{\infty}{\beta}_k^2|\widehat{f}(k)|^{2}}.
$$
One can see that $\{z^n\}_{n=0}^{\infty}$ for each $n\in \mathbb{\mathbb{N}}$. Moreover, we denote by $\|T\|_{H^2_{\beta}}$ the operator norm of the linear operator $T$ acting on $H^2_{\beta}$. In addition, we denote by $\|f\|$ the norm of the function $f$ in $H^2$, and denote by $\|T\|$ the operator norm of the linear operator $T$ acting on $H^2$.

In the present article, we always assume the weight sequence $w$ satisfies the regular condition as follows
$$
\lim\limits_{k\rightarrow\infty}w_k=1.
$$
Then every weighted Hardy space $H^2_{\beta}$ is contained in ${\rm Hol}(\mathbb{D})$.

An analytic self-map $g: \mathbb{D}\rightarrow \mathbb{D}$ introduces a linear operator $C_g: {\rm Hol}(\mathbb{D})\rightarrow {\rm Hol}(\mathbb{D})$, defined by
\[
C_g(f)=f(g),  \ \ \ \text{for any} \ f\in {\rm Hol}(\mathbb{D}).
\]
Then the operator $C_g$ is said to be a composition operator. In this paper, we are interested the composition operators with the symbols of analytic functions on the closed unit disk acting on weighted Hardy spaces of polynomial growth, which are concerned with norms, spectra and (semi-)Fredholmness.

It is a natural problem to study the boundedness or norms of the composition operators $C_g$ acting on weighted Hardy spaces $H^2_{\beta}$. There numerous authors have contributed to it, see \cite{Zor90}, \cite{Kri92}, \cite{C95}, \cite{Mac96}, \cite{H03}, \cite{Bou04}, \cite{H10}, \cite{Gal13} and \cite{Gal16} for instance. We have known that the composition operators induced by some analytic self-maps acting on some weighted Hardy spaces. For instance, the composition operators induced by linear fractional transformations are continuous on the classical Hardy space $H^2$, the weighted Bergman spaces, the weighted Hardy spaces which are larger than $H^2$, the weighted Dirichlet spaces and so on. However, it is still an open problem to determine the boundedness of the composition operators $C_g$ acting on weighted Hardy spaces $H^2_{\beta}$ in general, even for the composition operators induced by disk automorphisms. Recently, inspired by M. Gromov's work in the geometry group theory \cite{G81}, the authors in a previous article \cite{Hou} introduced the notion of weighted Hardy spaces of polynomial growth and proved that each composition operators induced by a disk automorphism acting on a weighted Hardy space of polynomial growth is bounded and the composition operators induced by a disk automorphism acting on a weighted Hardy space of intermediate growth is possible to be unbounded.
\begin{definition}
Let $w=\{w_k\}_{k=1}^{\infty}$  be a sequence of positive numbers with $w_k\rightarrow 1$.
\begin{enumerate}
 \item \ If $\sup\limits_{k}(k+1)|w_k-1|<\infty$, we say that the weighted Hardy space $H^2_{\beta}$ is of polynomial growth.
 \item \ If $\sup\limits_{k}(k+1)|w_k-1|=\infty$, we say that the weighted Hardy space $H^2_{\beta}$ is of intermediate growth.
\end{enumerate}
\end{definition}
\begin{remark}
Notice that many functions spaces such as the classical Hardy space, the weighted Bergman spaces, and the weighted Dirichlet spaces are all the weighted Hardy spaces of polynomial growth.

In addition, the condition $\sup_{k}(k+1)|w_k-1|<\infty$ holds if and only if there exists a positive number $M$ such that for each $k\in\mathbb{N}$,
\[
\frac{k+1}{k+M+1}\leq w_k \leq \frac{k+M+1}{k+1}.
\]
Moreover, such weighted Hardy space $H^2_{\beta}$ is said to be of $M$-polynomial growth.
\end{remark}
In this paper, we focus on the composition operators acting on weighted Hardy spaces of polynomial growth. In Section 2, we will estimate the norms of the composition operators with symbols of disk automorphisms acting on weighted Hardy spaces of polynomial growth.

The research in the spectra of composition operators began formally with Nordgren's paper \cite{Nor}, in which he determined the spectra of invertible composition operators
on the classical Hardy space $H^2$. Then, Kamowitz studied the same problem on $H^p$ and the disk algebra in \cite{Kam75} and \cite{Kam78}, respectively. In the following decades, this topic catched widely attention, we refer to the context by C. Cowen \cite{C95} and the references therein. It is worthy to note that when consider the small space (the weighted Hardy spaces contained in $H^2$ such as Dirichlet spaces and weighted Dirichlet spaces), the effective techniques are different from the case of classical Hardy space or Bergman space. Following from P. Hurst \cite{Hurst97}, E. Gallardo-Guti\'{e}rrez and A. Montes-Rodr\'{\i}guez \cite{Gal04}, W. Higdon \cite{Hig05}, A. Pons \cite{Pons10}, J. Pau and P. P\'{e}rez \cite{Pau}, E. Gallardo-Guti\'{e}rrez and R. Schroderus \cite{Gal16}, one can see the spectra of the composition operators induced by disk automorphisms and linear fractional non-automorphisms acting on the weighted Dirichlet spaces. In Section 3,  we discuss the spectra of the composition operators with symbols of disk automorphisms acting on weighted Hardy spaces of polynomial growth. In particular, it is proven of that the spectrum of a composition operator with symbol of any parabolic disk automorphism is always the unit circle.


Recall that a bounded linear operator $T$ acting on a Hilbert space to itself is called a semi-Fredholm operator, if $T$ has closed range and at least one of $\textrm{dim} \ \textrm{Ker}(T)<\infty$ and $\textrm{dim} \ \textrm{Ker}(T^*)<\infty$ is established. Moreover, $T$ is called a Fredholm operator, if $T$ has closed range and both $\textrm{dim} \ \textrm{Ker}(T)<\infty$ and $\textrm{dim} \ \textrm{Ker}(T^*)<\infty$ are established. Fredholm composition operators on the Hardy spaces of the open unit disk were characterized in \cite{Cima74}, \cite{Bou90} and \cite{Cao97}, Fredholm composition operators on the Bergman space were characterized in \cite{Ak12} and \cite{Bou90}, and Fredholm composition operators on the Dirichlet space were characterized in \cite{Cima76}. And there have been so many other articles to study the Fredholm operators on the spaces of holomorphic functions on domains in $\mathbb{C}$ or $\mathbb{C}^n$, see \cite{Cao05}, \cite{Gali10}, \cite{Kum80}, \cite{Mac97}, \cite{Zor94} and \cite{Zor98} for instance. In particular, G. Cao, L. He and K. Zhu \cite{Cao19} in 2019 proved that for an holomorphic self-map $g$ on a domain $\Omega$ in $\mathbb{C}^n$, if the reproducing kernel $K(w,w)$ of the Hilbert space tends to infinity as $w$ approaches the boundary of $\Omega$, $C_g$ is a Fredholm operator if and only if $C_g$ is a Fredholm operator, if and only if $g$ is a disk automorphism. Recently, G. Cao, L. He and J. Li \cite{Cao22} obtained the same conclusion without the assumption that $K(w,w)\rightarrow\infty$ as $w$ approaches $\partial\Omega$.

Notice that the composition operators induced by non-trivial analytic self-maps acting on weighted Hardy spaces are always injective. Then such composition operator is semi-Fredholm if it has closed range. The closed range composition operators on various function spaces also has attracted wide attention. In 1974, J. Cima, J. Thompson and W. Wogen \cite{Cima74} determined the composition operators on $H^2$ with closed range via the boundary behaviour of the inducing maps. Then, N. Zorboska \cite{Zor94} characterized the closed range composition operators via the properties of the range of the inducing maps on the unit disk, which answered a question in \cite{Cima74}. The closed range composition operators on Bergman space and weighted Bergman spaces were studied by D. Luecking in \cite{L81}, \cite{L84}, and J. Akeroyd and S. Fulmer in \cite{Ak12}, in which one can see that the Navanlinna counting function inducing a reverse Carleson measure is a necessary and sufficient condition for the closed range composition operators on the classical Hardy space and weighted Bergman spaces. However, in the case of the Dirichlet space, this seems to be more difficult. Luecking proved that the Navanlinna counting function inducing a reverse Carleson measure is a necessary condition for the closed range composition operators on the Dirichlet space in \cite{L85}, but is not sufficient condition \cite{L99}. More recently, G. Cao and L. He \cite{Cao23} gave a necessary and sufficient condition for the closed range composition operators on the Dirichlet space.

In Section 4, we consider the Fredholmness of the composition operator $C_{\varphi}$ with symbol of an analytic self-map $\varphi$ on the closed unit disk. We prove that $C_{\varphi}$ acting on a weighted Hardy space of polynomial growth has closed range (semi-Fredholmness) if and only if $\varphi$ is a finite Blaschke product. Furthermore, it is obtained that $C_{\varphi}$ is Fredholm if and only if $\varphi$ is a disk automorphism.

Up to now, there have been numerous research work on the composition operator acting on the classical Hardy space, weighted Bergman spaces and weighted Dirichlet spaces, in which it plays an important role of measure methods (integral representation, Carleson measure, Navanlinna counting function and so on). However, when considering weighted Hardy spaces in general, we may have no appropriate measure to use. The techniques in the present paper involve the operator theory, base theory and some topology as well as the composition operators theory, functional theory and complex analysis.

To avoid confusion, we denote by $(f)^n$ or $(f(z))^n$ the $n$-th power of a function $f$, and denote by $f^n$ the $n$-th iteration of a function $f$ if the iteration makes sense.

\section{Estimations of norms of the composition operators with symbols of disk automorphisms}

In the process of studying the similar representation of analytic functions in \cite{Hou}, the authors obtained the boundedness of the composition operators with symbols of disk automorphisms acting on weighted Hardy spaces of polynomial growth. For an analytic function $g\in{\rm Hol}(\mathbb{D})$, denote by $M_g$ the multiplication operator, which means for any $f\in{\rm Hol}(\mathbb{D})$, $M_g(f(z))=g(z)f(z)$.

\begin{theorem}[Theorem 3.1 in \cite{Hou}]\label{Mz}
Let $H^2_{\beta}$ be the weighted Hardy space of polynomial growth induced by a weight sequence $w=\{w_k\}_{k=1}^{\infty}$.  Then for any $\varphi\in {\textrm Aut(\mathbb{D})}$, $M_z\sim M_{\varphi}$, i.e., $M_z$ is weakly homogeneous on $H^2_{\beta}$. In fact, the composition operator $C_{\varphi}:H^2_{\beta}\rightarrow H^2_{\beta}$ is an isomorphism.
\end{theorem}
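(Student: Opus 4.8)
The plan is to reduce the whole statement to a single analytic fact — that $C_\varphi$ is a \emph{bounded} operator on $H^2_\beta$ for every $\varphi\in{\rm Aut}(\mathbb{D})$ — and then obtain the rest formally. Since ${\rm Aut}(\mathbb{D})$ is a group, $\varphi^{-1}\in{\rm Aut}(\mathbb{D})$, and a term-by-term computation with Taylor series gives $C_\varphi C_{\varphi^{-1}}=C_{\varphi^{-1}\circ\varphi}=I=C_{\varphi^{-1}}C_\varphi$; hence, once boundedness is available for all disk automorphisms, $C_\varphi$ is automatically invertible with bounded inverse $C_{\varphi^{-1}}$, i.e.\ an isomorphism of $H^2_\beta$. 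The similarity is then immediate: from $C_\varphi(zf)=\varphi\cdot C_\varphi f$ one reads off $C_\varphi M_z=M_\varphi C_\varphi$, so $M_\varphi=C_\varphi M_z C_\varphi^{-1}$, and as $\varphi$ ranges over all disk automorphisms this exhibits $M_z$ as weakly homogeneous on $H^2_\beta$. So the whole theorem rests on boundedness, and that is what I would work on.

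First I would peel off the rotation: writing a general automorphism as $\varphi=\rho_\theta\circ\varphi_a$ with $\rho_\theta(z)={\rm e}^{\mathbf{i}\theta}z$ and $\varphi_a(z)=\frac{a-z}{1-\overline{a}z}$, $a\in\mathbb{D}$, the operator $C_{\rho_\theta}$ merely multiplies $\widehat{f}(k)$ by ${\rm e}^{\mathbf{i}k\theta}$ and hence is unitary on $H^2_\beta$, and $C_\varphi=C_{\varphi_a}C_{\rho_\theta}$; so it suffices to bound $C_{\varphi_a}$ for each $a$. For this I would run a Littlewood--Paley (dyadic) argument: decompose $f=\sum_{j\ge0}f_j$, where $f_j$ gathers the Taylor coefficients of $f$ with index in $[2^j,2^{j+1})$. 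Two ingredients drive the estimate. First, a \emph{spectral localization} of $C_{\varphi_a}$: $\varphi_a$ restricts to a real-analytic diffeomorphism of $\partial\mathbb{D}$ whose derivative is pinched between $\frac{1-|a|}{1+|a|}$ and $\frac{1+|a|}{1-|a|}$, so a contour-shift (non-stationary-phase) estimate applied to the oscillatory integrals $\widehat{(\varphi_a(z))^{n}}(k)=\frac{1}{2\pi}\int_0^{2\pi}{\rm e}^{\mathbf{i}(n\arg\varphi_a({\rm e}^{\mathbf{i}t})-kt)}\,dt$ shows that $C_{\varphi_a}f_j$ has Taylor coefficients concentrated on a band of indices comparable to $2^j$, with a remainder decaying exponentially away from that band; the part inside the band is controlled via $\|C_{\varphi_a}f_j\|\le\|C_{\varphi_a}\|\,\|f_j\|$, the classical Littlewood boundedness on $H^2$. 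Second, the polynomial-growth hypothesis, in its equivalent form $c\,(k+1)^{-M}\le\beta_k\le C\,(k+1)^{M}$, forces $\beta$ to be slowly varying across scales: $\beta_k\asymp\beta_{2^j}$ whenever $k$ runs over a range comparable to $2^j$ (and the exponential tails then survive multiplication by $\beta_k^2$). Combining the two, $\|C_{\varphi_a}f_j\|_{H^2_\beta}\asymp\beta_{2^j}\|C_{\varphi_a}f_j\|\lesssim\beta_{2^j}\|f_j\|\asymp\|f_j\|_{H^2_\beta}$; and since the blocks $C_{\varphi_a}f_j$ have spectra with bounded overlap — the band for block $j'$ meets that for block $j$ only when $|j-j'|=O_a(1)$ — almost-orthogonality in the weighted $\ell^2$ of Taylor coefficients, together with the exponential control of the cross-terms and of the tails, yields $\|C_{\varphi_a}f\|_{H^2_\beta}^2\lesssim\sum_j\|C_{\varphi_a}f_j\|_{H^2_\beta}^2\lesssim\sum_j\|f_j\|_{H^2_\beta}^2\asymp\|f\|_{H^2_\beta}^2$.

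The hard part, I expect, is the quantitative spectral localization. The easy estimate for $\widehat{(\varphi_a(z))^{n}}(k)$ coming from the partial-fraction expansion of $(\varphi_a(z))^{n}$, or from a naive Cauchy estimate, is far too lossy — it essentially sees $\sup_{|z|=r}|\varphi_a(z)|^{n}$ instead of the true size of the coefficient — because the boundedness of $C_{\varphi_a}$ already on $H^2$ rests on a large cancellation in those oscillatory integrals, and that cancellation must be captured, e.g.\ by shifting the contour into the strip of analyticity of the boundary circle map $t\mapsto\arg\varphi_a({\rm e}^{\mathbf{i}t})$ and estimating the shifted integral. Two further points need care, although they become routine once the localization is in hand: making ``bounded overlap of the bands'', and the resulting almost-orthogonality, precise; and checking that the slowly-varying step genuinely uses \emph{polynomial} growth rather than only $w_k\to1$ — a sub-polynomial rate such as $|w_k-1|\sim k^{-1/2}$ already destroys $\beta_k\asymp\beta_{2^j}$ over a single dyadic block, and, in line with the intermediate-growth phenomena recorded in the companion paper, it can then make $C_{\varphi_a}$ unbounded.
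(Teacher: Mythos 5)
Your formal reductions are all correct: peeling off the rotation as a diagonal unitary, getting invertibility for free from $C_{\varphi}C_{\varphi^{-1}}=C_{\varphi^{-1}\circ\varphi}=I$ once boundedness is known for \emph{all} automorphisms, and reading the similarity $M_{\varphi}=C_{\varphi}M_zC_{\varphi}^{-1}$ off the intertwining relation $C_{\varphi}M_z=M_{\varphi}C_{\varphi}$. Be aware, though, that the statement you were given is imported verbatim from \cite{Hou}; the present paper does not reprove it, and the machinery displayed around Theorem \ref{C} shows that the route taken there is entirely different from yours. There one sets $B_0=z\varphi_{z_0}$, observes that $C_{\varphi_{z_0}}$ is precisely the change-of-basis operator between the systems $\widetilde{\mathfrak{F}}_{1,\beta}$ and $\widetilde{\mathfrak{F}}_{2,\beta}$ built from powers of the order-two Blaschke product $B_0$, and proves both are Riesz bases of $H^2_{\beta}$ by Gram-matrix computations (the identities (\ref{F1})--(\ref{F2}) and the block-diagonal matrix $D_{tm}$). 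Your Littlewood--Paley scheme is a genuinely different, harmonic-analysis-flavoured attack; if completed it would make transparent exactly where polynomial growth enters (your closing observation that a rate $|w_k-1|\sim k^{-1/2}$ already destroys $\beta_k\asymp\beta_{2^j}$ across a dyadic block is the right heuristic for the intermediate-growth failure), whereas the paper's basis-theoretic route has the advantage of feeding directly into the quantitative norm estimates of Section 2.

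The substantive gap is the one you yourself flag: the quantitative spectral localization of $C_{\varphi_a}$ on dyadic blocks is asserted, not proved, and it is where essentially all of the analytic content of the theorem lives. The claim is plausibly true --- the phase $t\mapsto n\arg\varphi_a({\rm e}^{\mathbf{i}t})-kt$ has derivative $n\psi'(t)-k$ with $\psi'(t)=\frac{1-|a|^2}{|1-\overline{a}{\rm e}^{\mathbf{i}t}|^2}\in[\frac{1-|a|}{1+|a|},\frac{1+|a|}{1-|a|}]$, and $\psi$ continues analytically to the strip $|\mathrm{Im}\,t|<|\log|a||$ (the zero at $z=a$ and the pole at $z=1/\overline{a}$ both sit at that height), so a contour shift does give exponential decay of $\widehat{(\varphi_a)^n}(k)$ for $k$ outside a fixed dilate of $[n\frac{1-|a|}{1+|a|},\,n\frac{1+|a|}{1-|a|}]$ --- but until that estimate is carried out, together with the bookkeeping that the exponential tails survive multiplication by the polynomially growing weight and that the cross-terms in the almost-orthogonal summation are controlled, what you have is a credible plan rather than a proof. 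As it stands I would accept the reduction steps and the architecture, and require the localization lemma to be stated precisely and proved before counting the boundedness (and hence the theorem) as established.
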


Moreover, the composition operators with symbols of finite Blaschke products and analytic self-maps acting on weighted Hardy spaces of polynomial growth were studied.

\begin{theorem}[Theorem 3.3 in \cite{Hou}]\label{CompBHol}
Let $H^2_{\beta}$ be the weighted Hardy space of polynomial growth induced by a weight sequence $w=\{w_k\}_{k=1}^{\infty}$.
If $\psi(z)$ is an analytic function on $\overline{\mathbb{D}}$ with $\psi(\mathbb{D})\subseteq \mathbb{D}$, then $C_{\psi}$ is bounded on $H^2_{\beta}$.
\end{theorem}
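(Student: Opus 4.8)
The plan is to trap $H^2_\beta$ between two classical spaces on which $C_\psi$ is easy to control, and then recover boundedness on $H^2_\beta$ by interpolation. Fix a positive integer $M$ with $\frac{k+1}{k+M+1}\le w_k\le \frac{k+M+1}{k+1}$ for all $k$ (if $w\equiv 1$ then $H^2_\beta=H^2$ and the statement is classical, so assume $w\not\equiv 1$). Let $\mathcal{D}_s$ be the weighted Hardy space attached to the weight $\beta_k=(k+1)^s$; thus $\mathcal{D}_0=H^2$, $\mathcal{D}_M$ is an analytic Sobolev space ($f^{(M)}\in H^2$), and, up to equivalence of norms, $\mathcal{D}_{-M}$ is the weighted Bergman space on $\mathbb{D}$ with weight $(1-|z|^2)^{2M-1}$. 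Multiplying out the growth inequalities and telescoping gives
\[
\Big(\tfrac{n+1}{n'+1}\Big)^{M}\ \lesssim\ \frac{\beta_n}{\beta_{n'}}\ \lesssim\ \Big(\tfrac{n'+1}{n+1}\Big)^{M}\qquad(n\le n'),
\]
in particular $c(k+1)^{-M}\le\beta_k\le C(k+1)^{M}$, so there are continuous inclusions with norm comparison $\mathcal{D}_M\hookrightarrow H^2_\beta\hookrightarrow \mathcal{D}_{-M}$.

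The first task is to verify that $C_\psi$ is bounded on each endpoint. Since $\psi$ extends holomorphically past $\overline{\mathbb{D}}$ and $\psi(\mathbb{D})\subseteq\mathbb{D}$, we may use: $\psi$ has finite valence on $\mathbb{D}$; $\sup_{\overline{\mathbb{D}}}|\psi'|<\infty$; $\sup_{z\in\mathbb{D}}\frac{1-|\psi(z)|^2}{1-|z|^2}<\infty$, this quotient tending to $0$ at boundary points where $|\psi|<1$ and to the angular derivative $|\psi'(\zeta)|$ — nonzero by the Julia--Carath\'{e}odory theorem — at the finitely many $\zeta\in\partial\mathbb{D}$ with $|\psi(\zeta)|=1$; and $\min\{|\psi'(\zeta)|:|\psi(\zeta)|=1\}>0$. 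With these facts a change of variables shows that the measure $(1-|z|^2)^{2M-1}\,dA$ pushed forward by $\psi$ is a Carleson measure for the weighted Bergman space, so $C_\psi$ is bounded on $\mathcal{D}_{-M}$. For $\mathcal{D}_M$ I would use the equivalent norm $\|f\|_{\mathcal{D}_M}^2\asymp\sum_{j\le M}|\widehat{f}(j)|^2+\int_{\mathbb{D}}|f^{(M+1)}(z)|^2(1-|z|^2)\,dA(z)$, expand $(f\circ\psi)^{(M+1)}$ by the Fa\`{a} di Bruno formula as $f^{(M+1)}(\psi)\,(\psi')^{M+1}$ plus a finite sum of terms $f^{(j)}(\psi)\cdot(\text{bounded holomorphic})$ with $1\le j\le M$, bound the leading term by the same change of variables and the lower-order ones crudely (after the change of variables they are controlled by $\int_{\mathbb{D}}|f^{(j)}(w)|^2(1-|w|^2)\,dA(w)\lesssim\|f\|_{\mathcal{D}_M}^2$), and absorb the finitely many low-order Taylor coefficients using boundedness of $g\mapsto g^{(j)}(\psi(0))$ on $\mathcal{D}_M$.

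The decisive step is to transfer boundedness from $\mathcal{D}_{\pm M}$ to $H^2_\beta$. Passing to Taylor coefficients, each of $\mathcal{D}_{-M},\mathcal{D}_M,H^2_\beta$ becomes a weighted $\ell^2$-space and $(\mathcal{D}_{-M},\mathcal{D}_M)$ is a diagonal Hilbert couple, for which the interpolation spaces are classified: up to equivalence of norm, $\ell^2(v)$ is an interpolation space for $(\ell^2(v_0),\ell^2(v_1))$ precisely when $v/v_0$ is a quasi-concave function of $v_1/v_0$. Here $v_0(n)=(n+1)^{-2M}$, $v_1(n)=(n+1)^{2M}$, $v(n)=\beta_n^2$, so one only needs $n\mapsto\beta_n^2(n+1)^{2M}$ to be quasi-increasing and $n\mapsto\beta_n^2(n+1)^{-2M}$ to be quasi-decreasing, and both follow at once from the telescoping bounds displayed above (the values $(n+1)^{4M}$ are log-dense because $w_k\to1$, so quasi-concavity at these points extends to a genuine quasi-concave function). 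Hence $H^2_\beta$ is an interpolation space for $(\mathcal{D}_{-M},\mathcal{D}_M)$, and the boundedness of $C_\psi$ on the two endpoints forces its boundedness on $H^2_\beta$.

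I expect this last step to be the genuine obstacle — not the interpolation machinery, which is standard for Hilbert couples, but the recognition that the polynomial-growth hypothesis is exactly what is needed: the two-sided polynomial bound squeezes $\beta$ between the two endpoint weights, while $w_k\to1$ (the slow variation of $\beta$) is what turns $H^2_\beta$ into an interpolation space — for a rapidly oscillating weight trapped between $(n+1)^{-2M}$ and $(n+1)^{2M}$ the conclusion simply fails, as a direct example shows. A byproduct worth recording separately is the elementary fact that every function holomorphic on $\overline{\mathbb{D}}$ is a multiplier of $H^2_\beta$, since $\|M_z^{\,j}\|_{H^2_\beta}$ grows only polynomially in $j$ whereas $|\widehat\psi(j)|$ decays geometrically; combined with the intertwining identity $C_\psi M_z=M_\psi C_\psi$ this is a convenient auxiliary observation in handling the estimates above.
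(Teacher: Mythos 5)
The paper does not actually prove this statement: it is quoted verbatim from the authors' earlier preprint \cite{Hou} (Theorem 3.3 there), so there is no in-paper argument to measure you against. The machinery the paper does display around it, however, makes the intended route clear: boundedness is obtained by an induction on the polynomial order, bootstrapping from the classical Hardy space to $H^2_{\widetilde{\beta}}$ with $\widetilde{\beta}_n=(n+1)\beta_n$ via the identity behind Lemma \ref{n+1} (differentiation intertwines $C_{B_0}$ with $D_{\widetilde{w}}M_{B_0'}C_{B_0}$), followed by Cowen's weight-comparison theorem (Theorem \ref{Controll}) to pass from the model weights $(k+1)^N$ to a general $\beta$ of $N$-polynomial growth. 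Your route --- sandwiching $H^2_\beta$ between $\mathcal{D}_M$ and $\mathcal{D}_{-M}$, proving endpoint boundedness (automatic on the Bergman-type lower endpoint; Fa\`a di Bruno plus Schwarz--Pick, finite valence and boundedness of $\psi'$ on the Sobolev-type upper endpoint), and then invoking interpolation for diagonal Hilbert couples with a quasi-concave function parameter --- is genuinely different and, as far as I can check, correct. The key verification, that $n\mapsto\beta_n^2(n+1)^{2M}$ is quasi-increasing and $n\mapsto\beta_n^2(n+1)^{-2M}$ is quasi-decreasing, does follow from telescoping the two-sided bound on $w_k$, and the direction of the interpolation theory you actually need is the easy one: a quasi-concave $h$ gives a $K$-monotone norm, and $K(t,Tf)\le\max(\|T\|_0,\|T\|_1)K(t,f)$ for any $T$ bounded on both endpoints, so you do not even need the Calder\'on-couple classification you cite. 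What your approach buys is conceptual economy and generality (it isolates exactly where polynomial growth enters); what the paper's approach buys is the explicit, quantitative norm bounds of Proposition \ref{NormB} and Theorem \ref{NormC}, which Section 3 then consumes to locate spectra --- an interpolation argument would deliver boundedness but not those constants without extra work.

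Two local slips, both repairable. First, the claim $\sup_{z\in\mathbb{D}}\frac{1-|\psi(z)|^2}{1-|z|^2}<\infty$ is written upside down: at a boundary point where $|\psi|<1$ this quotient tends to $+\infty$, not $0$. The fact you actually use is its reciprocal (bounded by Julia--Carath\'eodory at the finitely many contact points and trivially elsewhere), or more simply the Schwarz--Pick inequality $|\psi'(z)|(1-|z|^2)\le 1-|\psi(z)|^2$ together with $\|\psi'\|_{\overline{\mathbb{D}}}<\infty$, which is exactly what the leading Fa\`a di Bruno term needs; and boundedness on the Bergman endpoint is automatic for any self-map, so nothing is lost. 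Second, for the lower-order terms $f^{(j)}(\psi)\cdot(\text{bounded})$ with $1\le j\le M$ you cannot literally ``change variables'' since the Jacobian factor $|\psi'|^2$ is absent; instead bound $\int_{\mathbb{D}}|f^{(j)}(\psi(z))|^2(1-|z|^2)\,dA(z)\le\|C_\psi\|^2_{A^2_1\to A^2_1}\|f^{(j)}\|^2_{A^2_1}\lesssim\|f\|^2_{\mathcal{D}_M}$, using the automatic boundedness of $C_\psi$ on $A^2_1$. With these two repairs the argument stands.
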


\begin{theorem}[Theorem 3.2 in \cite{Hou}]\label{MB}
Let $H^2_{\beta}$ be a weighted Hardy space of polynomial growth, and let $B(z)$ be a finite Blaschke product with order $m$ on $\mathbb{D}$. Then
\[
M_B\sim \bigoplus_{1}^{m}M_z.
\]
\end{theorem}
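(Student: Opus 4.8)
The plan is to realise the similarity by an explicit intertwining operator built from the model space of $B$ on the classical Hardy space, and to produce its inverse by hand; the polynomial‑growth hypothesis will enter only at the end, to control how the weight $\beta$ distorts the Hardy‑space estimates underlying the construction. I would first dispose of two special cases. For $m=1$, $B$ is a disk automorphism, so $M_B\sim M_z$ is Theorem~\ref{Mz}. For $B(z)=z^m$, writing $f(z)=\sum_{j=0}^{m-1}z^{j}f_{j}(z^{m})$ identifies $H^2_\beta$ isometrically with $\bigoplus_{j=0}^{m-1}H^2_{\beta^{(j)}}$, where $H^2_{\beta^{(j)}}$ is the weighted sequence space with moments $(\beta_{mn+j})_{n\ge0}$, and under this identification $M_{z^m}$ becomes $\bigoplus_{j=0}^{m-1}(M_z\text{ on }H^2_{\beta^{(j)}})$. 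The uniform comparison
\[
\bigl|\log(\beta_{mn+j}/\beta_n)\bigr|\le\sum_{i=n+1}^{mn+j}|\log w_i|\le\sum_{i=n+1}^{mn+j}\frac{M}{i+1}\le M\log(m+1)
\]
shows $\beta_{mn+j}\asymp\beta_n$, and since two injective unilateral weighted shifts are similar exactly when their moment sequences are comparable, $M_z$ on each $H^2_{\beta^{(j)}}$ is similar to $M_z$ on $H^2_\beta$; hence $M_{z^m}\sim\bigoplus_1^m M_z$.

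For a general finite Blaschke product $B$ of order $m$ I would work with the model space $K_B=H^2\ominus BH^2$, which has dimension $m$ and an orthonormal basis $h_1,\dots,h_m$ of functions holomorphic across $\overline{\mathbb D}$; since $B$ is inner, $\{B^nh_j\}_{n\ge0,\,j}$ is an orthonormal basis of $H^2$ (Wold decomposition). A routine check shows $K_B$ is also a closed, finite‑codimensional complement of $BH^2_\beta$ in $H^2_\beta$, so there is a bounded projection $\Pi:H^2_\beta\to K_B$ along $BH^2_\beta$ and a bounded ``backward shift'' $R:=M_B^{-1}(I-\Pi)$ on $H^2_\beta$ (here $M_B$ is bounded below, being a product of the operators $M_{b_a}\sim M_z$). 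Now set, as maps $X:\bigoplus_1^m H^2_\beta\to H^2_\beta$ and $Y:H^2_\beta\to\bigoplus_1^m H^2_\beta$,
\[
X(f_1,\dots,f_m)=\sum_{j=1}^m h_j\cdot(f_j\circ B),\qquad Y(f)=\Bigl(\sum_{n\ge0}\langle \Pi R^nf,h_j\rangle\,z^n\Bigr)_{j=1}^m .
\]
Using $z\circ B=B$, $RM_B=I$ and $\Pi|_{BH^2_\beta}=0$ one verifies the algebraic identities $X\circ(\bigoplus_1^m M_z)=M_B\circ X$, $Y\circ M_B=(\bigoplus_1^m M_z)\circ Y$, $Y\circ X=\mathrm{id}$, and that $X,Y$ are injective; then $XY$ is a bounded idempotent with trivial kernel, hence $XY=\mathrm{id}$ as well, so $X$ is an isomorphism intertwining $\bigoplus_1^m M_z$ with $M_B$ — provided both $X$ and $Y$ are bounded. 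Boundedness of $X$ is routine: $C_B$ is bounded on $H^2_\beta$ by Theorem~\ref{CompBHol}, and each $M_{h_j}=\sum_k\widehat{h_j}(k)M_z^k$ converges in norm because polynomial growth gives $\|M_z^k\|_{H^2_\beta}=\sup_n(\beta_{n+k}/\beta_n)\le(k+1)^M$ while $\widehat{h_j}(k)$ decays geometrically.

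The real content, and where I expect the main difficulty, is the boundedness of $Y$, i.e. the square function estimate
\[
\sum_{n\ge0}\beta_n^2\,\|\Pi R^nf\|^2\ \lesssim\ \|f\|_{H^2_\beta}^2 .
\]
On $H^2$ this is the \emph{equality} $\sum_n\|P_{K_B}(M_B^*)^nf\|^2=\|f\|^2$ coming from the orthogonal Wold expansion $f=\sum_nB^n(P_{K_B}(M_B^*)^nf)$, so the task is to upgrade that exact Hardy‑space identity to a one‑sided bound on $H^2_\beta$. I would proceed as follows: precompose $B$ with a disk automorphism (which changes neither side, since $C_\varphi M_BC_\varphi^{-1}=M_{B\circ\varphi}$ and $C_\varphi$ is invertible by Theorem~\ref{Mz}) so as to assume $B(0)=0$ with multiplicity $\mu\ge1$, i.e. $B=(\text{unimodular})\,z^\mu\widetilde B$ with $\widetilde B(0)\ne0$. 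Then the $n$‑th block $B^n(\Pi R^nf)$ of the $B$‑expansion of $f$ is supported on Taylor coefficients in a window $W_n\subseteq[\mu n,Cn]$ up to a geometrically small tail (because $\widetilde B$, being a finite Blaschke product, is holomorphic past $\overline{\mathbb D}$), and on $W_n$ one has $\beta_k\asymp\beta_n$ by the same logarithmic estimate as above, so $\beta_n^2\|\Pi R^nf\|^2\asymp\|B^n(\Pi R^nf)\|_{H^2_\beta}^2$; it then remains to show $\sum_n\|B^n(\Pi R^nf)\|_{H^2_\beta}^2\lesssim\|f\|_{H^2_\beta}^2$. Grouping $n$ into dyadic ranges $2^p\le n<2^{p+1}$, the windows of all blocks in one range lie in a common super‑window $\widehat W_p$, consecutive $\widehat W_p$ have \emph{bounded} overlap, and the oscillation of $\log\beta$ across each $\widehat W_p$ is bounded independently of $p$; combining this with the $H^2$‑orthogonality of distinct blocks should turn the inequality into an $\ell^2$ frame estimate with a uniformly controlled Gram matrix. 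The main obstacle is making this overlap/tail bookkeeping precise enough to keep the implied constants uniform, and it is exactly here that polynomial growth is indispensable — for weighted Hardy spaces of intermediate growth even the comparison $\beta_{mn+j}\asymp\beta_n$ can fail, and then already $M_{z^m}\not\sim\bigoplus_1^m M_z$. Once this estimate is established, $X$ and $Y$ are bounded, $X$ is an isomorphism, and $M_B\sim\bigoplus_1^m M_z$.
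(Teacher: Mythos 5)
A preliminary remark: this paper does not actually prove Theorem \ref{MB} --- it is imported verbatim as Theorem 3.2 of \cite{Hou} --- so there is no internal proof to measure your argument against. What Section 2 reviews of the machinery of \cite{Hou} (transformation operators $X_{\mathfrak{F}}$ and Gram-matrix estimates showing that families such as $\{(B_0)^n/\beta_n,\ z(B_0)^n/\beta_n\}$ are Riesz bases of $H^2_{\beta}$) indicates that your plan is the same reduction in spirit: your operator $X$ is precisely the transformation operator of the family $\{B^nh_j\}$, and the theorem is equivalent to $\{B^nh_j/\beta_n\}$ being a Riesz basis of $H^2_{\beta}$, with the Takenaka--Malmquist basis of $K_B$ replacing the monomial-type complements used there.

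The difficulty is that your proposal stops exactly where the theorem becomes nontrivial. The algebraic skeleton ($X$, $Y$, the intertwining identities, $YX=\mathrm{id}$), the two special cases, and the upper bound $\|X\|<\infty$ are indeed routine; the whole content of the statement is the lower Riesz bound, i.e.\ the boundedness of $Y$, and there you offer a strategy with several unrepaired holes rather than a proof. Concretely: (i) the individual windows $W_n\subseteq[\mu n,Cn]$ do \emph{not} have bounded overlap --- a frequency $k$ lies in roughly $k(1/\mu-1/C)$ of them --- so the estimate cannot be run block by block; your dyadic regrouping is the right repair, but after it the quantity to control is $\sum_p\|f_p\|_{H^2_{\beta}}^2$ where $f_p$ is an $H^2$-orthogonal (not Fourier-localized) piece of $f$, and converting $H^2$-orthogonality plus \emph{approximate} frequency localization into an $\ell^2$ bound in the $\beta$-norm requires a genuine almost-orthogonality or Schur-test argument, including summing the $\asymp 2^p$ geometric tails inside each dyadic block without losing a factor; none of this is carried out. (ii) The ``routine check'' that $K_B$ complements $BH^2_{\beta}$ in $H^2_{\beta}$, and the convergence in $H^2_{\beta}$ of the expansion $f=\sum_nB^n\Pi R^nf$ (needed to identify $f_p$ with a piece of $f$), both need proofs: the first can be extracted from Theorem \ref{Mz} applied to $B$ factor by factor, but the second is not automatic since $\|R\|_{H^2_{\beta}}$ may well exceed $1$, so the partial sums need not converge to $f$ a priori. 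Until the square-function estimate $\sum_n\beta_n^2\|\Pi R^nf\|^2\leq C\|f\|_{H^2_{\beta}}^2$ is actually established, your argument proves the similarity only in the two special cases treated at the outset.
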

\begin{remark}\label{BsemiF}
Notice that $M_BC_B=C_BM_z$. Then, following from the above Theorem, one can see that the composition operators with symbols of finite Blaschke products with order $m$ have closed range. Consequently they are semi-Fredholm, since
\[
\textrm{dim} \ \textrm{Ker}(T)=0<\infty \ \ \ \text{and} \ \ \ \textrm{dim} \ \textrm{Ker}(T^*)=\left\{\begin{array}{cc}
0, \ \ \ &\text{if} \ m=1 \\
\infty, \ \ \ &\text{if} \ m>1
\end{array}\right..
\]
\end{remark}

Now, we have known the boundedness of the composition operators induced by disk automorphisms acting on weighted Hardy spaces of polynomial growth. Furthermore, we will give the estimation of the norms of those composition operators. First of all, let us review some notations and basic statements in \cite{Hou}, which are related to the bases theory (we refer to \cite{Sing} and \cite{Nik}).

Let ${H}$ be a complex separable Hilbert space and let $\mathfrak{X}=\{x_n\}_{n=0}^{\infty}$ and $\mathfrak{Y}=\{y_n\}_{n=0}^{\infty}$ be two sequences of vectors in ${H}$.
Define
$$
\langle  \mathfrak{X}, \mathfrak{Y}\rangle_{{H}}=(\langle  x_j, y_i\rangle_{{H}})_{i,j}.
$$
In particular, the Gram matrix of $\mathfrak{X}$ on ${H}$ is defined by
$$
\Gamma_{{H}}(\mathfrak{X})=\langle  \mathfrak{X}, \mathfrak{X}\rangle_{{H}}=(\langle  x_i, x_j\rangle_{{H}})_{i,j}.
$$
Let $\mathfrak{F}=\{f_n\}_{n=0}^{\infty}$ be a sequence of vectors in a weighted Hardy space $H^2_{\beta}$. Then $\mathfrak{F}$ induces a linear operator $X_{\mathfrak{F}}$ on $H^2_{\beta}$, defined by
$$
X_{\mathfrak{F}}(z^n)=f_n(z), \ \ \ \text{for} \ n=0,1,\ldots.
$$
Furthermore, the operator $X_{\mathfrak{F}}$ has a matrix representation under the orthogonal base $\{z^n\}_{n=0}^{\infty}$ as follows
$$
X_{\mathfrak{F}}=\begin{bmatrix}
\widehat{f_0}(0) & \widehat{f_1}(0) & \widehat{f_2}(0)  & \cdots & \widehat{f_k}(0) & \cdots \\
\widehat{f_0}(1)   & \widehat{f_1}(1) & \widehat{f_2}(1)  & \cdots & \widehat{f_k}(1) & \cdots \\
\widehat{f_0}(2)  & \widehat{f_1}(2)   & \widehat{f_2}(2)  & \cdots  & \widehat{f_k}(2) & \cdots \\
\vdots   & \vdots & \vdots & \ddots &\vdots  &\vdots \\
\widehat{f_0}(k)   & \widehat{f_1}(k) & \widehat{f_2}(k) & \cdots & \widehat{f_k}(k) & \cdots \\
\vdots   & \vdots & \vdots&\vdots  &\vdots & \ddots
\end{bmatrix}.
$$
We say that the operator $X_{\mathfrak{F}}$ is the transformation operator of the sequence $\mathfrak{F}$. In particular, if $\mathfrak{F}=\{(g)^n\}_{n=0}^{\infty}$ for some $g\in {\rm Hol}(\mathbb{D})$ with $g(\mathbb{D})\subseteq\mathbb{D}$, then the transformation operator $X_{\mathfrak{F}}$ is just the composition operator $C_g$.

Define the operator $D_{\beta}: H^2_{\beta}\rightarrow H^2$ by
$$
D_{\beta}(z^k)=\beta_kz^k  \ \ \ \text{for} \ n=0,1,\ldots.
$$
As well known, $D_{\beta}$ is an isometry isomorphism. Moreover, $D_{\beta}$ and $D^{-1}_{\beta}$ has matrix representations under the orthogonal base $\{z^n\}_{n=0}^{\infty}$ as follows
$$
D_{\beta}=\begin{bmatrix}
\beta_0 & 0  & \cdots & 0 & \cdots \\
0   & \beta_1   & \cdots & 0 & \cdots \\
\vdots   & \vdots  & \ddots &\vdots  &\vdots \\
0   & 0  & \cdots &\beta_k & \cdots \\
\vdots   & \vdots &\vdots  &\vdots & \ddots
\end{bmatrix}, \ \
D^{-1}_{\beta}=D_{\beta^{-1}}=\begin{bmatrix}
\frac{1}{\beta_0} & 0   & \cdots & 0 & \cdots \\
0   & \frac{1}{\beta_1}  & \cdots & 0 & \cdots \\
\vdots   & \vdots  & \ddots &\vdots  &\vdots \\
0   & 0  & \cdots &\frac{1}{\beta_k} & \cdots \\
\vdots   & \vdots &\vdots  &\vdots & \ddots
\end{bmatrix}.
$$
Then, $D_{\beta}X_{\mathfrak{F}}D^{-1}_{\beta}$ is a linear operator on the classical Hardy space $H^2$.

From now on, we always write $\varphi_{z_0}(z)=\frac{z_0-z}{1-\overline{z_0}z}$, $z_0\in\mathbb{D}\setminus\{0\}$ and $B_0(z)=z\varphi_{z_0}(z)$. Denote
\begin{align*}
&\mathfrak{F}=\{(B_0(z))^n\}_{n=0}^{\infty}, \\
&\mathfrak{F}_{\beta}=\{\frac{(B_0(z))^n}{\beta_n}\}_{n=0}^{\infty}, \\
&\widetilde{\mathfrak{F}}_1=\{(B_0(z))^n, z(B_0(z))^n; n=0,1,\ldots\}, \\
&\widetilde{\mathfrak{F}}_2=\{(B_0(z))^n, \varphi_{z_0}(z)(B_0(z))^n; n=0,1,\ldots\}, \\
&\widetilde{\mathfrak{F}}_{1, \beta}=\{\frac{(B_0(z))^n}{\beta_n}, \frac{z(B_0(z))^n}{\beta_n}; n=0,1,\ldots\}, \\
&\widetilde{\mathfrak{F}}_{2, \beta}=\{\frac{(B_0(z))^n}{\beta_n}, \frac{\varphi_{z_0}(z)(B_0(z))^n}{\beta_n}; n=0,1,\ldots\}.
\end{align*}

It follows from $B_0(z)=z\varphi_{z_0}(z)$ and $\varphi_{z_0}(\varphi_{z_0}(z))=z$ that for every $n=0,1,2,\ldots$,
\begin{align*}
&C_{\varphi_{z_0}}((B_0(z))^n)=(B_0(\varphi_{z_0}(z)))^n=(B_0(z))^n, \\
&C_{\varphi_{z_0}}(z(B_0(z))^n)=\varphi_{z_0}(z)(B_0(\varphi_{z_0}(z)))^n=\varphi_{z_0}(z)(B_0(z))^n.
\end{align*}
Then, the composition operator $C_{\varphi_{z_0}}$ is just a base transformation operator from the Riesz base $\widetilde{\mathfrak{F}}_{1,\beta}$ to the Riesz base $\widetilde{\mathfrak{F}}_{2,\beta}$.
One can see that $C_{B}$ is bounded on $H^2_{\beta}$ and $H^2_{\beta^{-1}}$. Consequently, $D_{\beta}X_{\mathfrak{F}}D^{-1}_{\beta}$ and $D^{-1}_{\beta}X_{\mathfrak{F}}D_{\beta}$ are bounded on $H^2$.

In addition, let
\[
D_{tm}=\begin{bmatrix}
D_1  & 0    & \cdots \\
0   & D_2   & \cdots \\
\vdots &\vdots  &\ddots
\end{bmatrix}=\begin{bmatrix}
1 & w_1^{-1}\overline{z_0} & 0   & 0 & \cdots \\
w_1z_0   & 1 & 0   & 0 & \cdots \\
0   & 0   & 1    & w_2^{-1}\overline{z_0} & \cdots \\
0   & 0 & w_2z_0   & 1 & \cdots \\
\vdots   & \vdots &\vdots  &\vdots & \ddots
\end{bmatrix}.
\]
where
\[
D_n=\begin{bmatrix}
1 & w_n^{-1}\overline{z_0}  \\
w_nz_0   & 1
\end{bmatrix} \ \ \ \ \text{for any} \ n\in\mathbb{N}.
\]

Now, let us begin to estimate the norms of the composition operators induced by disk automorphisms acting on weighted Hardy spaces of polynomial growth.

\begin{theorem}\label{C}
Let $H^2_{\beta}$ be the weighted Hardy space of polynomial growth induced by a weight sequence $w=\{w_k\}_{k=1}^{\infty}$.  Then for any $\varphi\in {\rm Aut(\mathbb{D})}$.
\begin{align}
& (D_{\beta}X^*_{\widetilde{\mathfrak{F}}_1}D^{-1}_{\beta}) (D_{\beta}M^*_{\frac{\sqrt{1-|z_0|^2}}{1-\overline{z_0}z}}D^{-1}_{\beta})(D_{\beta}M_{\frac{\sqrt{1-|z_0|^2}}{1-\overline{z_0}z}}D^{-1}_{\beta})
 (D_{\beta}X_{\widetilde{\mathfrak{F}}_1}D^{-1}_{\beta})=D_{tm}, \label{F1} \\
& (D_{\beta}X^*_{\widetilde{\mathfrak{F}}_2}D^{-1}_{\beta})
 (D_{\beta}X_{\widetilde{\mathfrak{F}}_2}D^{-1}_{\beta})=D_{tm}. \label{F2}
\end{align}
Moreover,
\[
\|C^{-1}_{\varphi}\|_{H^2_{\beta}}=\|C_{\varphi^{-1}}\|_{H^2_{\beta}}=\|C_\varphi\|_{H^2_{\beta}}=\|X_{\widetilde{\mathfrak{F}}_{1}} X^{-1}_{\widetilde{\mathfrak{F}}_{2}}\|_{H^2_{\beta}}\leq
\|X_{\widetilde{\mathfrak{F}}_{1}}\|_{H^2_{\beta}}\| X_{\widetilde{\mathfrak{F}}_{2}}\|_{H^2_{\beta^{-1}}} \|D^{-1}_{tm}\|.
\]
\end{theorem}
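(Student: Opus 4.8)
The plan is to work through the isometric copy $H^2 \cong H^2_\beta$ given by $D_\beta$, where the composition operator $C_{\varphi_{z_0}}$ becomes a base-transformation operator between the two Riesz bases $\widetilde{\mathfrak{F}}_{1,\beta}$ and $\widetilde{\mathfrak{F}}_{2,\beta}$. The first task is to establish the identities \eqref{F1} and \eqref{F2}, i.e. to compute the Gram-type matrices $\langle \widetilde{\mathfrak{F}}_{1,\beta}, \widetilde{\mathfrak{F}}_{1,\beta}\rangle$ and $\langle \widetilde{\mathfrak{F}}_{2,\beta}, \widetilde{\mathfrak{F}}_{2,\beta}\rangle$ explicitly. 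I would exploit the fact that $B_0(z) = z\varphi_{z_0}(z)$ has modulus one on $\partial\mathbb{D}$, so that on $H^2$ multiplication by $(B_0)^n$ acts nearly isometrically; more precisely, since $\{(B_0)^n\}$ and $\{z(B_0)^n\}$ together span $H^2$ in a structured way, the inner products $\langle z^i (B_0)^m, z^j (B_0)^n\rangle_{H^2}$ collapse to a block-diagonal pattern — each $2\times 2$ block coming from the "defect" of $B_0$ being a two-step Blaschke product. The constant $\sqrt{1-|z_0|^2}/(1-\overline{z_0}z)$ appearing in \eqref{F1} is exactly the normalizing reproducing-kernel factor $k_{z_0}$, and $M_{k_{z_0}}$ is an isometry on $H^2$; inserting it converts the mixed basis $\widetilde{\mathfrak{F}}_1$ (using $z$) into something comparable with $\widetilde{\mathfrak{F}}_2$ (using $\varphi_{z_0}$), which is why the same matrix $D_{tm}$ appears on the right of both identities. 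The key computation is that $\varphi_{z_0}(z) = z_0 - (1-|z_0|^2)\sum_{k\ge 1}\overline{z_0}^{k-1} z^k$, which feeds the off-diagonal entries $w_n^{-1}\overline{z_0}$ and $w_n z_0$ of each block $D_n$ once one passes through $D_\beta$ (the weights $w_n$ enter precisely because $D_\beta$ rescales $z^n \mapsto \beta_n z^n$ and $\beta_{n+1}/\beta_n = w_{n+1}$).

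With \eqref{F1} and \eqref{F2} in hand, the operator-norm chain follows formally. From \eqref{F2}, $(D_\beta X_{\widetilde{\mathfrak{F}}_2} D_\beta^{-1})^* (D_\beta X_{\widetilde{\mathfrak{F}}_2} D_\beta^{-1}) = D_{tm}$ is bounded and boundedly invertible (the blocks $D_n$ have determinant $1 - |z_0|^2 \ne 0$ and, because $w_n \to 1$, are uniformly bounded above and below), so $X_{\widetilde{\mathfrak{F}}_2}$ is bounded and invertible on $H^2_\beta$; similarly \eqref{F1} together with the isometry of $M_{k_{z_0}}$ on $H^2$ shows $X_{\widetilde{\mathfrak{F}}_1}$ is bounded and invertible on $H^2_\beta$. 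Since $C_{\varphi_{z_0}}$ carries $\widetilde{\mathfrak{F}}_{1,\beta}$ onto $\widetilde{\mathfrak{F}}_{2,\beta}$, we get the factorization $C_{\varphi_{z_0}} = X_{\widetilde{\mathfrak{F}}_2} X_{\widetilde{\mathfrak{F}}_1}^{-1}$ — or its inverse, depending on the direction — hence $\|C_{\varphi_{z_0}}\|_{H^2_\beta} = \|X_{\widetilde{\mathfrak{F}}_1} X_{\widetilde{\mathfrak{F}}_2}^{-1}\|_{H^2_\beta}$ after matching the roles of $\widetilde{\mathfrak{F}}_1$ and $\widetilde{\mathfrak{F}}_2$ as recorded in the displayed computations preceding the theorem. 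The equality $\|C_\varphi\|_{H^2_\beta} = \|C_{\varphi^{-1}}\|_{H^2_\beta} = \|C_\varphi^{-1}\|_{H^2_\beta}$ is immediate from Theorem \ref{Mz} (the composition operator is an isomorphism and $C_\varphi^{-1} = C_{\varphi^{-1}}$), and the reduction from a general $\varphi \in \mathrm{Aut}(\mathbb{D})$ to the special form $\varphi_{z_0}$ uses that any automorphism is $e^{\mathbf{i}\theta}\varphi_{z_0}$ and that $C_{e^{\mathbf{i}\theta} z}$ is a diagonal unitary on every $H^2_\beta$, so it does not change the norm.

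Finally, for the inequality, I would write $X_{\widetilde{\mathfrak{F}}_1} X_{\widetilde{\mathfrak{F}}_2}^{-1}$ and estimate $\|X_{\widetilde{\mathfrak{F}}_2}^{-1}\|_{H^2_\beta}$. From \eqref{F2}, $\|X_{\widetilde{\mathfrak{F}}_2}^{-1}\|_{H^2_\beta} = \|D_{tm}^{-1}\|^{1/2}$; but the claimed bound pairs $\|D_{tm}^{-1}\|$ (not its square root) with $\|X_{\widetilde{\mathfrak{F}}_2}\|_{H^2_{\beta^{-1}}}$, so the right way is to use the identity $X_{\widetilde{\mathfrak{F}}_2}^{-1} = D_{tm}^{-1}\, X_{\widetilde{\mathfrak{F}}_2}^{*}$ interpreted across the pair $H^2_\beta, H^2_{\beta^{-1}}$ (the adjoint of $X_{\widetilde{\mathfrak{F}}_2}: H^2_\beta \to H^2_\beta$ relates to $X_{\widetilde{\mathfrak{F}}_2}$ acting on $H^2_{\beta^{-1}}$ via the $D_\beta$ conjugation, exactly as set up in the paragraph on $D_\beta X_{\mathfrak F} D_\beta^{-1}$), giving $\|X_{\widetilde{\mathfrak{F}}_2}^{-1}\|_{H^2_\beta} \le \|D_{tm}^{-1}\|\,\|X_{\widetilde{\mathfrak{F}}_2}\|_{H^2_{\beta^{-1}}}$; then submultiplicativity finishes it. The main obstacle I anticipate is the first step: verifying \eqref{F1}–\eqref{F2} requires a careful bookkeeping of the double-indexed inner products $\langle z^{\epsilon_1}(B_0)^m, z^{\epsilon_2}(B_0)^n\rangle_{H^2}$ with $\epsilon_i \in \{0,1\}$ and confirming that all cross terms between different blocks vanish — this is where the specific structure $B_0 = z\varphi_{z_0}$ and the orthogonality of $\{z^n\}$ in $H^2$ must be used precisely, and where the weights $w_n$ must be tracked through $D_\beta$ correctly to land on the stated entries of $D_n$.
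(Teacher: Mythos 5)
Your proposal follows essentially the same route as the paper: reduce to $\varphi_{z_0}$ via the unitarity of $C_{{\rm e}^{\mathbf{i}\theta}z}$, use $C_{\varphi_{z_0}}X_{\widetilde{\mathfrak{F}}_1}=X_{\widetilde{\mathfrak{F}}_2}$ together with $\varphi_{z_0}^{-1}=\varphi_{z_0}$ to get $\|C_{\varphi_{z_0}}\|_{H^2_{\beta}}=\|X_{\widetilde{\mathfrak{F}}_1}X_{\widetilde{\mathfrak{F}}_2}^{-1}\|_{H^2_{\beta}}$, and bound $\|X_{\widetilde{\mathfrak{F}}_2}^{-1}\|_{H^2_{\beta}}$ by $\|D_{tm}^{-1}\|\,\|X_{\widetilde{\mathfrak{F}}_2}\|_{H^2_{\beta^{-1}}}$ through $X_{\widetilde{\mathfrak{F}}_2}^{-1}=D_{tm}^{-1}X_{\widetilde{\mathfrak{F}}_2}^{*}$ and the $D_{\beta}$-conjugation identity, exactly as in the text (which itself defers the verification of \eqref{F1}--\eqref{F2} to the computation in \cite{Hou}). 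The one slip in your sketch of that computation is the assertion that $M_{\frac{\sqrt{1-|z_0|^2}}{1-\overline{z_0}z}}$ is an isometry on $H^2$ --- it is not, since its symbol has non-constant boundary modulus (the Poisson kernel at $z_0$); the fact actually driving \eqref{F1} is that the weighted composition operator $M_{\frac{\sqrt{1-|z_0|^2}}{1-\overline{z_0}z}}C_{\varphi_{z_0}}$ is unitary on $H^2$.
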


\begin{proof}
Each element in $\varphi\in{\rm Aut(\mathbb{D})}$ could be written as follows
$$
\varphi(z)={\rm e}^{\mathbf{i}\theta}\cdot \frac{z_0-z}{1-\overline{z_0}z}={\rm e}^{\mathbf{i}\theta}\cdot\varphi_{z_0}, \ \ \ \text{for some} \ z_0\in\mathbb{D}.
$$
Since $C_{{\rm e}^{\mathbf{i}\theta}z}$ is an isometry, we have
\[
\|C_{\varphi}\|_{H^2_{\beta}}=\|C_{\frac{z_0-z}{1-\overline{z_0}z}}C_{{\rm e}^{\mathbf{i}\theta}z}\|_{H^2_{\beta}}=\|C_{\varphi_{z_0}}\|_{H^2_{\beta}}.
\]
So, it suffices to consider $\varphi_{z_0}(z)$.
Following some computation (the details could be found in the proof of Theorem 3.1 in \cite{Hou}), one can see that the equations (\ref{F1}) and (\ref{F2}) hold.

Since $C_{\varphi_{z_0}}(\widetilde{\mathfrak{F}}_{1})=\widetilde{\mathfrak{F}}_{1}$, we have
\[
C_{\varphi_{z_0}} X_{\widetilde{\mathfrak{F}}_{1}}= X_{\widetilde{\mathfrak{F}}_{2}}.
\]
Together with $\varphi_{z_0}^{-1}=\varphi_{z_0}$, one can see that
\[
\|C^{-1}_{\varphi_{z_0}}\|_{H^2_{\beta}}=\|C_{\varphi_{z_0}^{-1}}\|_{H^2_{\beta}}=\|C_{\varphi_{z_0}}\|_{H^2_{\beta}}=\|X_{\widetilde{\mathfrak{F}}_{1}} X^{-1}_{\widetilde{\mathfrak{F}}_{2}}\|_{H^2_{\beta}}\leq
\|X_{\widetilde{\mathfrak{F}}_{1}}\|_{H^2_{\beta}}\| X^{-1}_{\widetilde{\mathfrak{F}}_{2}}\|_{H^2_{\beta}}.
\]
By the equation (\ref{F2}),
\begin{align*}
\| X^{-1}_{\widetilde{\mathfrak{F}}_{2}}\|_{H^2_{\beta}}=&\|D_{\beta} X^{-1}_{\widetilde{\mathfrak{F}}_{2}}D^{-1}_{\beta}\|   \\
=&\|D^{-1}_{tm}(D_{\beta} X^{*}_{\widetilde{\mathfrak{F}}_{2}}D^{-1}_{\beta})\|   \\
\le&\|D^{-1}_{tm}\|\|D_{\beta} X^{*}_{\widetilde{\mathfrak{F}}_{2}}D^{-1}_{\beta}\|   \\
=&\|D^{-1}_{tm}\|\|D^{-1}_{\beta} X_{\widetilde{\mathfrak{F}}_{2}}D_{\beta}\|   \\
=&\|D^{-1}_{tm}\|\| X_{\widetilde{\mathfrak{F}}_{2}}\|_{H^2_{\beta^{-1}}}.
\end{align*}
This finishes the proof.
\end{proof}

Then, our task is to estimate $\| X_{\widetilde{\mathfrak{F}}_{1}}\|_{H^2_{\beta}}$, $\| X_{\widetilde{\mathfrak{F}}_{2}}\|_{H^2_{\beta^{-1}}}$ and $\|D^{-1}_{tm}\|$.

\begin{lemma}\label{D-1tm}
The notations as above. We have
\[
\|D^{-1}_{tm}\|\leq\frac{\sqrt{1+|z_0|^2\sup\limits_{n}(w_n^2+w_n^{-2})}}{1-|z_0|^2} \ \ \ \text{and} \ \ \|D_{tm}\|\leq \sqrt{1+|z_0|^2\sup\limits_{n}(w_n^2+w_n^{-2})}.
\]
\end{lemma}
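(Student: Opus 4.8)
The key point is that $D_{tm}$ is block diagonal with respect to the orthogonal decomposition $\bigoplus_{n\geq 1}\mathbb{C}^{2}$, the $n$-th block being $D_{n}=\begin{bmatrix}1 & w_{n}^{-1}\overline{z_{0}}\\ w_{n}z_{0} & 1\end{bmatrix}$, so that $\|D_{tm}\|=\sup_{n\geq 1}\|D_{n}\|$. Moreover $\det D_{n}=1-|z_{0}|^{2}>0$ for every $n$, hence each $D_{n}$ is invertible; for a $2\times 2$ matrix $A$ the singular values $\sigma_{1}\geq\sigma_{2}>0$ satisfy $\sigma_{1}\sigma_{2}=|\det A|$, so $\|A^{-1}\|=\sigma_{2}^{-1}=\|A\|/|\det A|$, and therefore $\|D_{n}^{-1}\|=\|D_{n}\|/(1-|z_{0}|^{2})$. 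Since the inverse of a block diagonal invertible operator is the block diagonal of the inverses, this gives $\|D_{tm}^{-1}\|=\sup_{n}\|D_{n}^{-1}\|=\|D_{tm}\|/(1-|z_{0}|^{2})$. Thus the whole lemma reduces to the single uniform estimate $\|D_{n}\|\leq\sqrt{1+|z_{0}|^{2}\sup_{k}(w_{k}^{2}+w_{k}^{-2})}$ for all $n$.

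To obtain that estimate I would compute $\|D_{n}\|^{2}$ as the larger eigenvalue of the positive matrix
\[
D_{n}^{*}D_{n}=\begin{bmatrix}1+|z_{0}|^{2}w_{n}^{2} & (w_{n}+w_{n}^{-1})\overline{z_{0}}\\ (w_{n}+w_{n}^{-1})z_{0} & 1+|z_{0}|^{2}w_{n}^{-2}\end{bmatrix},
\]
whose trace is $2+|z_{0}|^{2}(w_{n}^{2}+w_{n}^{-2})$ and whose determinant is $(\det D_{n})^{2}=(1-|z_{0}|^{2})^{2}$. Writing $T_{n}$ for the trace and $\Delta=(1-|z_{0}|^{2})^{2}$, one has $\|D_{n}\|^{2}=\tfrac12\bigl(T_{n}+\sqrt{T_{n}^{2}-4\Delta}\bigr)$, and what remains is the elementary step of bounding this root. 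An equivalent and shorter route is to write $D_{n}=I+E_{n}$ with $E_{n}$ the off-diagonal part; then $E_{n}^{*}E_{n}=\operatorname{diag}(|z_{0}|^{2}w_{n}^{2},|z_{0}|^{2}w_{n}^{-2})$, so $\|E_{n}\|=|z_{0}|\max\{w_{n},w_{n}^{-1}\}\leq|z_{0}|\sqrt{w_{n}^{2}+w_{n}^{-2}}$ and $\|D_{n}\|\leq 1+\|E_{n}\|$. In either case one then takes the supremum over $n$, which is finite precisely because the regularity hypothesis $w_{k}\to 1$ forces $\sup_{k}(w_{k}^{2}+w_{k}^{-2})<\infty$; the bound for $\|D_{tm}^{-1}\|$ then follows immediately from the first paragraph.

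The only non-routine point is the first paragraph: recognizing the block structure so that the operator norm of the infinite matrix collapses to a supremum of $2\times 2$ norms, and using the $2\times 2$ inversion identity $\|A^{-1}\|=\|A\|/|\det A|$ to tie the two displayed inequalities together. Everything after that is a finite-dimensional computation, and $w_{k}\to 1$ is exactly what makes the supremum over the blocks finite. I expect the delicate part to be extracting the precise constant in the $2\times 2$ estimate — handling the square root $\sqrt{T_{n}^{2}-4\Delta}$, rather than settling for the coarser constant produced by $\|D_{n}\|\leq 1+\|E_{n}\|$ — so that is where I would put the written-out details.
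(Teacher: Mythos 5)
Your reduction is exactly the paper's: $\|D_{tm}\|=\sup_n\|D_n\|$ and $\|D_{tm}^{-1}\|=\sup_n\|D_n^{-1}\|$ by block-diagonality, and the block norms are read off from the eigenvalues of $D_n^*D_n$, whose trace is $T_n=2+|z_0|^2(w_n^2+w_n^{-2})$ and whose determinant is $\Delta=(1-|z_0|^2)^2$. The identity $\|D_n^{-1}\|=\|D_n\|/(1-|z_0|^2)$ that you extract from $\sigma_1\sigma_2=|\det D_n|$ is a slightly cleaner way to couple the two inequalities than the paper's separate computation of the smaller eigenvalue, but it is the same arithmetic.

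The trouble is the one step you postponed as ``elementary.'' The required bound $\lambda_{\max}(D_n^*D_n)=\tfrac12\bigl(T_n+\sqrt{T_n^2-4\Delta}\bigr)\le 1+|z_0|^2(w_n^2+w_n^{-2})=T_n-1$ is equivalent, after squaring $\sqrt{T_n^2-4\Delta}\le T_n-2$, to $T_n\le 1+(1-|z_0|^2)^2\le 2$, which fails for every $n$ once $z_0\neq 0$. Concretely, for $w_n=1$ and $z_0=r\in(0,1)$ the block is $\left(\begin{smallmatrix}1&r\\ r&1\end{smallmatrix}\right)$ with norm $1+r>\sqrt{1+2r^2}$. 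So the estimate as stated in the lemma is false; the paper's own proof reaches it only by miscopying the trace of $D_n^*D_n$ as $1+|z_0|^2(w_n^2+w_n^{-2})$ instead of $2+|z_0|^2(w_n^2+w_n^{-2})$ in its eigenvalue formulas. What your (correct) computation actually yields is $\|D_n\|\le\sqrt{T_n}=\sqrt{2+|z_0|^2(w_n^2+w_n^{-2})}$ and hence $\|D_{tm}^{-1}\|\le\sqrt{2+|z_0|^2\sup_n(w_n^2+w_n^{-2})}\,/(1-|z_0|^2)$. This corrected constant is harmless downstream: in the proof of Theorem \ref{NormC} one has $\sqrt{2+|z_0|^2\sup_n(w_n^2+w_n^{-2})}\le\sup_n(w_n+w_n^{-1})\le N+2$, exactly the bound used there. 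You should therefore prove the lemma with $2$ in place of $1$ under the square roots; no amount of care with $\sqrt{T_n^2-4\Delta}$ will produce the stated constant.
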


\begin{proof}
For each $n\in\mathbb{N}$,
\[
D^*_nD_n=\begin{bmatrix}
1+|w_nz_0|^2 & (w_n+w_n^{-1})\overline{z_0}  \\
(w_n+w_n^{-1})z_0   & 1+|z_0w_n^{-1}|^2
\end{bmatrix}.
\]
It has two eigenvalues as follows
\[
\lambda_1^{(n)}=\frac{1+(w_n^2+w_n^{-2})|z_0|^2-\sqrt{(1+(w_n^2+w_n^{-2})|z_0|^2)^2-4(1-|z_0|^2)^2}}{2},
\]
\[
\lambda_2^{(n)}=\frac{1+(w_n^2+w_n^{-2})|z_0|^2+\sqrt{(1+(w_n^2+w_n^{-2})|z_0|^2)^2-4(1-|z_0|^2)^2}}{2}.
\]
Consequently,
\[
\|D_{n}\|=|\lambda_2^{(n)}|^{\frac{1}{2}}\leq \sqrt{1+|z_0|^2(w_n^2+w_n^{-2})},
\]
\begin{align*}
\|D_{n}^{-1}\|=&|\lambda_1^{(n)}|^{-\frac{1}{2}} \\
=&\left|\frac{2}{1+(w_n^2+w_n^{-2})|z_0|^2-\sqrt{(1+(w_n^2+w_n^{-2})|z_0|^2)^2-4(1-|z_0|^2)^2}}\right|^{\frac{1}{2}} \\
=&\left|\frac{1+(w_n^2+w_n^{-2})|z_0|^2+\sqrt{(1+(w_n^2+w_n^{-2})|z_0|^2)^2-4(1-|z_0|^2)^2}}{2(1-|z_0|^2)^2}\right|^{\frac{1}{2}} \\
\leq& \frac{\sqrt{1+|z_0|^2(w_n^2+w_n^{-2})}}{1-|z_0|^2}.
\end{align*}
Then,
\[
\|D_{tm}\|=\sup\limits_{n}\|D_{n}\|  \leq \sqrt{1+|z_0|^2\sup\limits_{n}(w_n^2+w_n^{-2})},
\]
\[
\|D^{-1}_{tm}\|=\sup\limits_{n}\|D^{-1}_{n}\| \leq\frac{\sqrt{1+|z_0|^2\sup\limits_{n}(w_n^2+w_n^{-2})}}{1-|z_0|^2}.
\]
\end{proof}

\begin{lemma}\label{phiB}
The notations as above. We have
\[
\|X_{{\mathfrak{F}}}\|_{H^2_{\beta}}
\leq\|X_{\widetilde{\mathfrak{F}}_{1}}\|_{H^2_{\beta}} \leq (1+\|M_z\|_{H^2_{\beta}})\|X_{{\mathfrak{F}}}\|_{H^2_{\beta}}
\]
and
\[
\|X_{{\mathfrak{F}}}\|_{H^2_{\beta^{-1}}}
\leq\|X_{\widetilde{\mathfrak{F}}_{2}}\|_{H^2_{\beta^{-1}}}\leq(1+\|M_{\varphi_{z_0}}\|_{H^2_{\beta^{-1}}})\|X_{{\mathfrak{F}}}\|_{H^2_{\beta^{-1}}}.
\]
\end{lemma}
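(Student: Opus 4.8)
\emph{Proof sketch.}

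The plan is to make the combinatorial structure of $\widetilde{\mathfrak{F}}_1$ (and $\widetilde{\mathfrak{F}}_2$) completely explicit and to reduce the four inequalities to the boundedness of $X_{\mathfrak{F}}=C_{B_0}$, $M_z$ and $M_{\varphi_{z_0}}$ on $H^2_\beta$ and on $H^2_{\beta^{-1}}$, which is already available from Theorems~\ref{Mz}, \ref{CompBHol} and \ref{MB}. The key observation is that $\widetilde{\mathfrak{F}}_1$ is obtained by interleaving $\mathfrak{F}=\{(B_0)^n\}$ -- which occupies the even positions -- with its $M_z$-translate $\{z(B_0)^n\}$ -- which occupies the odd positions -- so that $X_{\widetilde{\mathfrak{F}}_1}$ acts by ``unzipping'' Taylor coefficients. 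Precisely, writing $f(z)=\sum_k\widehat{f}(k)z^k$ in the form $f(z)=g_e(z^2)+z\,g_o(z^2)$ with $g_e(w)=\sum_n\widehat{f}(2n)w^n$ and $g_o(w)=\sum_n\widehat{f}(2n+1)w^n$, one reads off directly from the definition of the transformation operator the identity
\[
X_{\widetilde{\mathfrak{F}}_1}(f)=X_{\mathfrak{F}}(g_e)+M_z\,X_{\mathfrak{F}}(g_o),
\]
and, specializing to $f(z)=h(z^2)$, the identity $X_{\widetilde{\mathfrak{F}}_1}(h(z^2))=X_{\mathfrak{F}}(h)$ for every $h$. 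The same formulas hold verbatim for $\widetilde{\mathfrak{F}}_2$ with $M_z$ replaced by $M_{\varphi_{z_0}}$; moreover polynomial growth guarantees $g_e,g_o\in H^2_\beta$ whenever $f\in H^2_\beta$.

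Given these identities the estimates split cleanly. For the upper bounds one applies the triangle inequality to the first identity,
\[
\|X_{\widetilde{\mathfrak{F}}_1}(f)\|_{H^2_\beta}\le\|X_{\mathfrak{F}}\|_{H^2_\beta}\bigl(\|g_e\|_{H^2_\beta}+\|M_z\|_{H^2_\beta}\,\|g_o\|_{H^2_\beta}\bigr),
\]
so that, once $\|g_e\|_{H^2_\beta}$ and $\|g_o\|_{H^2_\beta}$ are each controlled by $\|f\|_{H^2_\beta}$, the stated constant $1+\|M_z\|_{H^2_\beta}$ drops out; on $H^2_{\beta^{-1}}$ the argument for $\widetilde{\mathfrak{F}}_2$ is word-for-word the same with $\varphi_{z_0}$ in place of $z$. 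For the lower bounds one uses $X_{\mathfrak{F}}(h)=X_{\widetilde{\mathfrak{F}}_1}(h(z^2))$, which yields
\[
\|X_{\mathfrak{F}}(h)\|_{H^2_\beta}\le\|X_{\widetilde{\mathfrak{F}}_1}\|_{H^2_\beta}\,\|h(z^2)\|_{H^2_\beta},
\]
so that $\|X_{\mathfrak{F}}\|_{H^2_\beta}\le\|X_{\widetilde{\mathfrak{F}}_1}\|_{H^2_\beta}$ follows once $\|h(z^2)\|_{H^2_\beta}$ is controlled by $\|h\|_{H^2_\beta}$, and likewise on $H^2_{\beta^{-1}}$; one may equivalently phrase this lower estimate through the observation that the Gram matrix of $\mathfrak{F}$ is the even principal submatrix of the Gram matrix of $\widetilde{\mathfrak{F}}_1$, and that a principal compression of a positive operator does not increase the norm.

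The one point I expect to be genuinely delicate, and where the polynomial-growth hypothesis is indispensable, is exactly the remaining bookkeeping: comparing the $H^2_\beta$- and $H^2_{\beta^{-1}}$-norm of a coefficient sequence indexed by $n$ with the norm of its dyadically stretched versions indexed by $2n$ and $2n+1$ -- equivalently, controlling the dilation operators $z^n\mapsto z^{2n}$ and $z^n\mapsto z^{2n+1}$ together with their inverses, whose norms are governed by the dyadic products $\beta_{2n}/\beta_n=\prod_{j=n+1}^{2n}w_j$ and $\beta_{2n+1}/\beta_n$. Here the upper and lower estimates demand these comparisons in opposite directions, so a single crude supremum of $\beta_{2n}/\beta_n$ is not enough; one has to use the precise constraint $\sup_k(k+1)|w_k-1|<\infty$ (equivalently $\frac{k+1}{k+M+1}\le w_k\le\frac{k+M+1}{k+1}$), which pins down the size of the products $\prod_{j=n+1}^{2n}w_j$ tightly, tracking how $\beta$ behaves on dyadic blocks rather than globally, in order to return the clean constants in the statement. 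Once this weight analysis is in place, the four displayed inequalities follow by assembling the triangle-inequality bound and the compression/identity bound above.
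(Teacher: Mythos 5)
Your reduction of $X_{\widetilde{\mathfrak{F}}_1}$ to an ``unzipping'' identity $X_{\widetilde{\mathfrak{F}}_1}(f)=X_{\mathfrak{F}}(g_e)+M_zX_{\mathfrak{F}}(g_o)$ is a genuinely different route from the paper's, but it has a gap exactly where you flag one, and the gap does not close the way you hope. Both your upper and lower bounds hinge on comparing $\|g_e\|_{H^2_\beta},\|g_o\|_{H^2_\beta}$ with $\|f\|_{H^2_\beta}$, i.e.\ on the dilation maps $z^n\mapsto z^{2n}$ and $z^n\mapsto z^{2n+1}$ and their inverses. Their norms are governed by $\sup_n\beta_{2n}/\beta_n$ and $\sup_n\beta_n/\beta_{2n}$, and polynomial growth does \emph{not} pin the products $\prod_{j=n+1}^{2n}w_j$ near $1$: it only traps them in an interval of the form $[2^{-M},2^{M}]$ roughly. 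Concretely, for $\beta_k=(k+1)^N$ one has $\beta_{2n}/\beta_n=\bigl((2n+1)/(n+1)\bigr)^N\to 2^N$, so $\|h(z^2)\|_{H^2_\beta}\approx 2^N\|h\|_{H^2_\beta}$ and your lower estimate only yields $\|X_{\mathfrak{F}}\|_{H^2_\beta}\le 2^N\|X_{\widetilde{\mathfrak{F}}_1}\|_{H^2_\beta}$; the upper estimate likewise acquires an extra factor. So your argument proves the lemma only up to multiplicative constants depending on $M$, not with the clean constants $1$ and $1+\|M_z\|_{H^2_\beta}$ in the statement. A second, related issue: the operator $\|X_{\widetilde{\mathfrak{F}}_1}\|$ that the paper actually estimates is the one whose square norm equals $\|\Gamma_{H^2_\beta}(\widetilde{\mathfrak{F}}_{1,\beta})\|$, where \emph{both} members of the $n$-th pair are normalized by $\beta_n$; your literal convention $X_{\widetilde{\mathfrak{F}}_1}(z^{2n})=(B_0)^n$, $X_{\widetilde{\mathfrak{F}}_1}(z^{2n+1})=z(B_0)^n$ differs from this by the diagonal factors $\beta_{2n}/\beta_n$, $\beta_{2n+1}/\beta_n$, so your parenthetical ``principal compression'' remark is not equivalent to your dilation identity and also does not recover the paper's $\Gamma_{11}$.

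The paper avoids dilations entirely. It writes the Gram matrix of the pair-normalized system $\widetilde{\mathfrak{F}}_{1,\beta}$ as a $2\times 2$ block operator matrix with $\Gamma_{11}=\langle\mathfrak{F}_\beta,\mathfrak{F}_\beta\rangle$, $\Gamma_{12}=\langle\mathfrak{F}_\beta,M_z\mathfrak{F}_\beta\rangle$, etc., each block being a product of conjugated copies of $X_{\mathfrak{F}}$ and $M_z$. The lower bound is then the fact that a diagonal block of a positive block operator matrix has norm at most that of the whole matrix, $\|X_{\mathfrak{F}}\|^2=\|\Gamma_{11}\|\le\|\Gamma\|$, and the upper bound is $\|\Gamma\|\le\sum_{i,j}\|\Gamma_{ij}\|\le(1+\|M_z\|)^2\|X_{\mathfrak{F}}\|^2$. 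If you want to salvage your approach, the fix is to carry out your compression idea on the correctly normalized Gram matrix rather than on the raw coefficient unzipping; at that point it becomes the paper's proof.
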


\begin{proof}
One can see that
$$
\Gamma_{H^2_{\beta}}(\widetilde{\mathfrak{F}}_{1, \beta})=\langle  \widetilde{\mathfrak{F}}_{1, \beta}, \widetilde{\mathfrak{F}}_{1, \beta}\rangle_{H^2_{\beta}}=\begin{bmatrix}
\Gamma_{11} &  \Gamma_{12}  \\
\Gamma_{21}   & \Gamma_{22} \\
\end{bmatrix}=\begin{bmatrix}
\langle\mathfrak{F}_{\beta}, \mathfrak{F}_{\beta}\rangle_{H^2_{\beta}} &  \langle\mathfrak{F}_{\beta}, M_z\mathfrak{F}_{\beta}\rangle_{H^2_{\beta}}  \\
\langle M_z\mathfrak{F}_{\beta}, \mathfrak{F}_{\beta}\rangle_{H^2_{\beta}}   & \langle M_z\mathfrak{F}_{\beta}, M_z\mathfrak{F}_{\beta}\rangle_{H^2_{\beta}} \\
\end{bmatrix},
$$
where
\begin{align*}
&\Gamma_{11}=\langle\mathfrak{F}_{\beta}, \mathfrak{F}_{\beta}\rangle_{H^2_{\beta}}=(D^{-1}_{\beta}X^*_{\mathfrak{F}}D_{\beta})(D_{\beta}X_{\mathfrak{F}}D^{-1}_{\beta}), \\
&\Gamma_{12}=\langle\mathfrak{F}_{\beta}, M_z\mathfrak{F}_{\beta}\rangle_{H^2_{\beta}}=(D^{-1}_{\beta}X^*_{\mathfrak{F}}D_{\beta})(D^{-1}_{\beta}M_z^*D_{\beta})(D_{\beta}X_{\mathfrak{F}}D^{-1}_{\beta}),  \\
&\Gamma_{21}=\langle M_z\mathfrak{F}_{\beta}, \mathfrak{F}_{\beta}\rangle_{H^2_{\beta}}=(D^{-1}_{\beta}X^*_{\mathfrak{F}}D_{\beta})(D_{\beta}M_zD^{-1}_{\beta})(D_{\beta}X_{\mathfrak{F}}D^{-1}_{\beta}),   \\
&\Gamma_{22}=\langle M_z\mathfrak{F}_{\beta}, M_z\mathfrak{F}_{\beta}\rangle_{H^2_{\beta}}=
(D^{-1}_{\beta}X^*_{\mathfrak{F}}D_{\beta})(D^{-1}_{\beta}M_z^*D_{\beta})(D_{\beta}M_zD^{-1}_{\beta})(D_{\beta}X_{\mathfrak{F}}D^{-1}_{\beta}).
\end{align*}
Consequently,
\begin{align*}
&\|\Gamma_{11}\|=\|D_{\beta}X_{\mathfrak{F}}D^{-1}_{\beta})\|^2=\|X_{{\mathfrak{F}}}\|_{H^2_{\beta}}^2, \\
&\|\Gamma_{12}\|\le\|D^{-1}_{\beta}X^*_{\mathfrak{F}}D_{\beta}\|\|D^{-1}_{\beta}M_z^*D_{\beta}\|\|(D_{\beta}X_{\mathfrak{F}}D^{-1}_{\beta}\|
=\|M_z\|_{H^2_{\beta}}\|X_{{\mathfrak{F}}}\|_{H^2_{\beta}}^2,  \\
&\|\Gamma_{21}\|\le\|D^{-1}_{\beta}X^*_{\mathfrak{F}}D_{\beta}\|\|D_{\beta}M_zD^{-1}_{\beta}\|\|D_{\beta}X_{\mathfrak{F}}D^{-1}_{\beta}\|
=\|M_z\|_{H^2_{\beta}}\|X_{{\mathfrak{F}}}\|_{H^2_{\beta}}^2,   \\
&\|\Gamma_{22}\|\le\|D^{-1}_{\beta}X^*_{\mathfrak{F}}D_{\beta}\|\|D_{\beta}M_zD^{-1}_{\beta}\|^2\|D_{\beta}X_{\mathfrak{F}}D^{-1}_{\beta}\|
=\|M_z\|_{H^2_{\beta}}^2\|X_{{\mathfrak{F}}}\|_{H^2_{\beta}}^2.
\end{align*}
Then,
\[
\|X_{{\mathfrak{F}}}\|_{H^2_{\beta}}^2=\|\Gamma_{11}\|
\le\|\Gamma_{H^2_{\beta}}(\widetilde{\mathfrak{F}}_{1, \beta})\|=\|X_{\widetilde{\mathfrak{F}}_{1}}\|_{H^2_{\beta}}^2
\]
and
\begin{align*}
\|X_{\widetilde{\mathfrak{F}}_{1}}\|_{H^2_{\beta}}^2=\Gamma_{H^2_{\beta}}(\widetilde{\mathfrak{F}}_{1, \beta})
\le&\|\Gamma_{11}\|+\|\Gamma_{12}\|+\|\Gamma_{21}\|+\|\Gamma_{22}\| \\
\leq& (1+\|M_z\|_{H^2_{\beta}})^2\|X_{{\mathfrak{F}}}\|_{H^2_{\beta}}^2.
\end{align*}
Therefore,
\[
\|X_{{\mathfrak{F}}}\|_{H^2_{\beta}}
\leq\|X_{\widetilde{\mathfrak{F}}_{1}}\|_{H^2_{\beta}} \leq (1+\|M_z\|_{H^2_{\beta}})\|X_{{\mathfrak{F}}}\|_{H^2_{\beta}},
\]
and similarly, we have
\[
\|X_{{\mathfrak{F}}}\|_{H^2_{\beta^{-1}}}
\leq\|X_{\widetilde{\mathfrak{F}}_{2}}\|_{H^2_{\beta^{-1}}}\leq(1+\|M_{\varphi_{z_0}}\|_{H^2_{\beta^{-1}}})\|X_{{\mathfrak{F}}}\|_{H^2_{\beta^{-1}}}.
\]
\end{proof}

Recall that for $\lambda>0$, the weighted Dirichlet space $D_{\lambda}^2$ consists of those analytic functions $f$ on $\mathbb{D}$ with
\[
\|f\|_{D_{\lambda}^2}=\left(|f(0)|^2+ \int_{\mathbb{D}}|f'(z)|^2(1-|z|^2)^{\lambda-1}\textrm{d}A(z) \right)^{\frac{1}{2}}<\infty
\]
where $\textrm{d}A(z)=\frac{1}{\pi}\textrm{d}x\textrm{d}y$ is the normalized area measure on $\mathbb{D}$. As well known,
the weighted Dirichlet space $D_{\lambda}^2$ could be seemed as a weighted Hardy space $H^2_{\beta^{(\lambda)}}$, where $\beta^{(\lambda)}_0=1$ and for $k=1,2,\ldots$, $\beta_k^{(\lambda)}=\prod_{j=1}^{k}w_{j}^{(\lambda)}$ with $w_{j}^{(\lambda)}=\sqrt{\frac{j+2\lambda+1}{j+1}}$.

For $N\in\mathbb{N}$, let $\beta^{(N,1)}=\{\beta^{(N,1)}_k\}_{k=0}^{\infty}$ and $\beta^{(N,2)}=\{\beta^{(N,2)}_k\}_{k=0}^{\infty}$, where
\[
w_{j}^{(N,1)}=\frac{j+N}{j}, \ \ \beta_k^{(N,1)}=\prod_{j=1}^{k}w_{j}^{(N,1)}=C_{N+k}^{k}=\frac{(N+k)!}{N!\cdot k!},
\]
and
\[
w_{j}^{(N,2)}=\frac{(j+1)^N}{j^N}, \ \ \beta_k^{(N,2)}=\prod_{j=1}^{k}w_{j}^{(N,2)}=(k+1)^N.
\]
Furthermore, denote by $D_{N,1}^2$ and $D_{N,2}^2$ the weighted Hardy space $H^2_{\beta^{(N,1)}}$ and $H^2_{\beta^{(N,2)}}$, respectively. It is not difficult to see that both $D_{N,1}^2$ and $D_{N,2}^2$ are the weighted Dirichlet space $D_{N}^2$ with equivalent norms.

\begin{lemma}\label{Mnorm}
Let $H^2_{\beta}$ be a weighted Hardy space of $N$-polynomial growth induced by the weight sequence $\{\beta_n\}_{n=1}^{\infty}$. Then
\[
\|M_{z^n}\|_{H^2_{\beta}}\leq C_{N+k}^{k}=\beta^{(N,1)}_n, \ \ \ \|M_{\frac{1}{1-\overline{z_0}z}}\|_{H^2_{\beta}}\leq \frac{1}{(1-|z_0|)^{N+1}},
\]
and
\[
\|M_{\frac{z_0-z}{1-\overline{z_0}z}}\|_{H^2_{\beta}}\leq \frac{N+2}{(1-|z_0|)^{N+1}}.
\]
\end{lemma}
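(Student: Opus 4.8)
The plan is to reduce all three estimates to the single, completely explicit computation of the norm of the shift-type operators $M_{z^n}$ on a weighted Hardy space, and then bootstrap. Since $M_{z^n}$ carries the orthogonal basis vector $z^k$ to $z^{k+n}$, it is a weighted forward shift relative to the orthonormal basis $\{z^k/\beta_k\}_{k\ge 0}$, with weights $\beta_{k+n}/\beta_k=\prod_{j=k+1}^{k+n}w_j$, so that $\|M_{z^n}\|_{H^2_\beta}=\sup_{k\ge 0}\prod_{j=k+1}^{k+n}w_j$. Invoking the $N$-polynomial growth hypothesis in the form $w_j\le\frac{j+N+1}{j+1}$, I would bound $\prod_{j=k+1}^{k+n}w_j\le\prod_{j=k+1}^{k+n}\frac{j+N+1}{j+1}$ and check by a one-line cross-multiplication that the right side is non-increasing in $k$ (the ratio of the $(k+1)$-st product to the $k$-th one simplifies to $\frac{(k+n+N+2)(k+2)}{(k+n+2)(k+N+2)}\le 1$, the difference of numerator and denominator being $-nN$). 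Hence the supremum is attained at $k=0$, and the termwise inequality $\frac{j+N+1}{j+1}\le\frac{j+N}{j}$ for $j\ge 1$ gives
\[
\|M_{z^n}\|_{H^2_\beta}\le\prod_{j=1}^{n}\frac{j+N+1}{j+1}\le\prod_{j=1}^{n}\frac{j+N}{j}=C_{N+n}^{n}=\beta^{(N,1)}_n .
\]

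Next I would expand $\frac{1}{1-\overline{z_0}z}=\sum_{k\ge 0}\overline{z_0}^{\,k}z^k$ and consider the operator series $\sum_{k\ge 0}\overline{z_0}^{\,k}M_{z^k}$. By the first step together with the negative binomial identity $\sum_{k\ge 0}C_{N+k}^{k}x^k=(1-x)^{-(N+1)}$, the partial sums are Cauchy in operator norm, since
\[
\sum_{k\ge 0}|z_0|^k\,\|M_{z^k}\|_{H^2_\beta}\le\sum_{k\ge 0}C_{N+k}^{k}|z_0|^k=\frac{1}{(1-|z_0|)^{N+1}} .
\]
Evaluating the series on each monomial identifies the limit operator with $M_{1/(1-\overline{z_0}z)}$ — here one also uses that $\sum_{k}\overline{z_0}^{\,k}z^{k+m}\to z^m/(1-\overline{z_0}z)$ in $H^2_\beta$, which is immediate from $|z_0|<1$ and the polynomial growth of $\{\beta_k\}$ — whence $\|M_{1/(1-\overline{z_0}z)}\|_{H^2_\beta}\le(1-|z_0|)^{-(N+1)}$.

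Finally, for the Möbius multiplier I would factor $M_{\frac{z_0-z}{1-\overline{z_0}z}}=(z_0 I-M_z)\,M_{\frac{1}{1-\overline{z_0}z}}$ and combine the two previous steps with $\|M_z\|_{H^2_\beta}\le C_{N+1}^{1}=N+1$ (the case $n=1$), obtaining
\[
\Big\|M_{\frac{z_0-z}{1-\overline{z_0}z}}\Big\|_{H^2_\beta}\le\bigl(|z_0|+\|M_z\|_{H^2_\beta}\bigr)\cdot\frac{1}{(1-|z_0|)^{N+1}}\le\frac{N+2}{(1-|z_0|)^{N+1}} .
\]
The computations are all elementary; the only points that call for a little care are the monotonicity-in-$k$ claim in the first step (equivalently, that the weighted-shift norm is attained at the bottom of the shift) and the routine verification in the second step that the operator-norm-convergent series genuinely represents $M_{1/(1-\overline{z_0}z)}$. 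I do not expect either to pose a real obstacle.
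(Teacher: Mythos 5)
Your proposal is correct and follows essentially the same route as the paper: bound $\|M_{z^n}\|_{H^2_\beta}$ as a weighted shift by comparing the weights with the decreasing sequence $\frac{j+N}{j}$ to get $C_{N+n}^{n}$, sum the geometric series $\sum_k \overline{z_0}^{\,k}M_{z^k}$ against the negative binomial identity, and factor $M_{\varphi_{z_0}}=(z_0I-M_z)M_{1/(1-\overline{z_0}z)}$. The only difference is that you spell out two details the paper dismisses as easy (the monotonicity in $k$ of the window product, and the identification of the norm-convergent series with $M_{1/(1-\overline{z_0}z)}$), which is harmless.
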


\begin{proof}
Since $w_n\le\frac{n+1+N}{n+1}<\frac{n+N}{n}=w_n^{(N,1)}$ for all $n=1,2,\ldots$, and $\{w_n^{(N,1)}\}$ is a decreasing sequence of positive numbers, it is easy to see that
\[
\|M_{z^n}\|_{H^2_{\beta}}\le\|M_{z^n}\|_{D_{N}^2}=C_{N+n}^{n}=\beta^{(N)}_n.
\]
Furthermore,
\[
\|M_{\frac{1}{1-\overline{z_0}z}}\|_{H^2_{\beta}}=
\|\sum\limits_{n=0}^{\infty}z_0^nM_{z^n}\|_{H^2_{\beta}}\le\sum\limits_{n=0}^{\infty}|z_0|^n \|M_{z^n}\|_{H^2_{\beta}}\le\sum\limits_{n=0}^{\infty}|z_0|^n\beta^{(N,1)}_n.
\]
Then
\[
\frac{1}{(1-|z_0|)^{N+1}}=\sum\limits_{n=0}^{\infty}|z_0|^nC_{N+n}^{n}=\sum\limits_{n=0}^{\infty}|z_0|^n\beta^{(N,1)}_n\ge\|M_{\frac{1}{1-\overline{z_0}z}}\|_{H^2_{\beta}}
\]
and consequently,
\[
\|M_{\frac{z_0-z}{1-\overline{z_0}z}}\|_{H^2_{\beta}}\leq(|z_0|+\|M_{z}\|_{H^2_{\beta}})\|M_{\frac{1}{1-\overline{z_0}z}}\|_{H^2_{\beta}}\le \frac{N+2}{(1-|z_0|)^{N+1}}.
\]
This finishes the proof.
\end{proof}

Define $D_w, D:H^2_{\beta}\rightarrow H^2_{\beta}$ by, for any $f(z)\in H^2_{\beta}$
$$
D_wf(z)=\sum\limits_{k=0}^{\infty}w_{k+1}\widehat{f}(k)z^k  \ \ \text{and} \ \
Df(z)=\sum\limits_{k=0}^{\infty}\frac{k+2}{k+1}\cdot \widehat{f}(k)z^k .
$$
We could write the two operators $D_w$ and $D$ in matrix form under the orthogonal base $\{z^k\}_{k=0}^{\infty}$ as follows,
$$
D_w=\begin{bmatrix}
w_1 & 0 & 0  & \cdots & 0 & \cdots \\
0   & w_2 & 0  & \cdots & 0 & \cdots \\
0   & 0   & w_3  & \cdots  & 0 & \cdots \\
\vdots   & \vdots & \vdots & \ddots &\vdots  &\vdots \\
0   & 0 & 0 & \cdots &w_k & \cdots \\
\vdots   & \vdots & \vdots&\vdots  &\vdots & \ddots
\end{bmatrix} \ \ \text{and}  \ \
D=\begin{bmatrix}
2 & 0 & 0  & \cdots & 0 & \cdots \\
0   & \frac{3}{2} & 0  & \cdots & 0 & \cdots \\
0   & 0   & \frac{4}{3}  & \cdots  & 0 & \cdots \\
\vdots   & \vdots & \vdots & \ddots &\vdots  &\vdots \\
0   & 0 & 0 & \cdots \frac{k+1}{k} & \cdots \\
\vdots   & \vdots & \vdots&\vdots  &\vdots & \ddots
\end{bmatrix},
$$
Let $\widetilde{\beta}_n=(n+1)\beta_n$ for $n=0,1,\ldots$ and $\widetilde{w}_k=\frac{\beta_{k}}{\beta_{k-1}}=w_k\cdot\frac{k+1}{k}$ for $k=1,2,\ldots$. Moreover, define $D_{\widetilde{w}}:H^2_{\beta}\rightarrow H^2_{\beta}$ by, for any $f(z)\in H^2_{\beta}$
$$
D_wf(z)=\sum\limits_{k=0}^{\infty}w_{k+1}\widehat{f}(k)z^k,
$$
which could be written in a matrix form under the orthogonal base $\{z^k\}_{k=0}^{\infty}$ as follows,
$$
D_{\widetilde{w}}=DD_w=\begin{bmatrix}
\widetilde{w_1} & 0 & 0  & \cdots & 0 & \cdots \\
0   & \widetilde{w_2} & 0  & \cdots & 0 & \cdots \\
0   & 0   & \widetilde{w_3}  & \cdots  & 0 & \cdots \\
\vdots   & \vdots & \vdots & \ddots &\vdots  &\vdots \\
0   & 0 & 0 & \cdots &w_k & \cdots \\
\vdots   & \vdots & \vdots&\vdots  &\vdots & \ddots
\end{bmatrix}.
$$

\begin{lemma}\label{n+1}
Let $H^2_{\beta}$ be the weighted Hardy space induced by a weight sequence $w=\{w_k\}_{k=1}^{\infty}$ with $w_k\rightarrow 1$.
Denote $\widetilde{\beta}=\{\widetilde{\beta}_n\}_{n=0}^{\infty}$, where $\widetilde{\beta}_n=(n+1)\beta_n$. Then,
\[
\|C_{B_0}\|_{H^2_{\widetilde{\beta}}}\leq \|D_{\widetilde{w}}\| \cdot \|M_{B'_0}\|_{H^2_{\beta}}\cdot \|C_{B_0}\|_{H^2_{\beta}}.
\]
\end{lemma}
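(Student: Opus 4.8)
The idea is to transport the estimate from $H^2_{\widetilde\beta}$ down to $H^2_\beta$ by term-by-term integration, using that the weight $\widetilde\beta_n=(n+1)\beta_n$ makes the $H^2_{\widetilde\beta}$-norm of a function coincide with the $H^2_\beta$-norm of a $D_{\widetilde w}$-twisted antiderivative, and that $B_0(0)=0$ makes the chain rule interact cleanly with $C_{B_0}$. For $g\in H^2_\beta$ write $(Jg)(z)=\int_0^z g(\zeta)\,d\zeta$. Comparing Taylor coefficients and using only $\widetilde\beta_n=(n+1)\beta_n$ together with $\widetilde w_n=\widetilde\beta_n/\widetilde\beta_{n-1}$, one checks the exact identity $\|Jg\|_{H^2_{\widetilde\beta}}=\|D_{\widetilde w}g\|_{H^2_\beta}$. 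Now each $f\in H^2_{\widetilde\beta}$ splits $H^2_{\widetilde\beta}$-orthogonally as $f=f(0)\cdot 1+J(f')$, and since $\|1\|_{H^2_{\widetilde\beta}}=\widetilde\beta_0=1$ this gives $\|f\|_{H^2_{\widetilde\beta}}^2=|f(0)|^2+\|D_{\widetilde w}f'\|_{H^2_\beta}^2$.

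Next apply the chain rule. Because $B_0(z)=z\varphi_{z_0}(z)$ we have $B_0(0)=0$, hence $(C_{B_0}f)(0)=f(0)$, while $(C_{B_0}f)'=B_0'\cdot(f'\circ B_0)=M_{B_0'}C_{B_0}(f')$. Feeding this pair into the identity of the previous paragraph yields $\|C_{B_0}f\|_{H^2_{\widetilde\beta}}^2=|f(0)|^2+\|D_{\widetilde w}M_{B_0'}C_{B_0}(f')\|_{H^2_\beta}^2$. Since $\widetilde w_k\to 1$ and the $\widetilde w_k$ are positive, $D_{\widetilde w}$ is bounded and invertible on $H^2_\beta$; set $N:=\|D_{\widetilde w}M_{B_0'}C_{B_0}D_{\widetilde w}^{-1}\|_{H^2_\beta}$. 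Then $\|D_{\widetilde w}M_{B_0'}C_{B_0}(f')\|_{H^2_\beta}\le N\|D_{\widetilde w}f'\|_{H^2_\beta}$, so that $\|C_{B_0}f\|_{H^2_{\widetilde\beta}}^2\le|f(0)|^2+N^2(\|f\|_{H^2_{\widetilde\beta}}^2-|f(0)|^2)\le\max\{1,N^2\}\,\|f\|_{H^2_{\widetilde\beta}}^2$, i.e. $\|C_{B_0}\|_{H^2_{\widetilde\beta}}\le\max\{1,N\}$.

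It remains to prove $N\le\|D_{\widetilde w}\|\cdot\|M_{B_0'}\|_{H^2_\beta}\cdot\|C_{B_0}\|_{H^2_\beta}$; once this is available we are done, because the right-hand side is automatically at least $1$ (one has $\|D_{\widetilde w}\|=\sup_k\widetilde w_k\ge 1$, $\|C_{B_0}\|_{H^2_\beta}\ge\|C_{B_0}1\|_{H^2_\beta}=1$, and $\|M_{B_0'}\|_{H^2_\beta}\ge\|B_0'\|_{H^\infty(\mathbb D)}>1$ since $B_0$ is a finite Blaschke product, so $|B_0'|>1$ on the unit circle). This last estimate for $N$ — showing that the two-sided diagonal conjugation by $D_{\widetilde w}$ costs only one factor $\|D_{\widetilde w}\|$, not $\|D_{\widetilde w}\|\,\|D_{\widetilde w}^{-1}\|$ — is where the real work lies and will be the main obstacle. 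The plan for it is to push one copy of $D_{\widetilde w}^{-1}$ past $M_{B_0'}C_{B_0}$, using identities such as $C_{B_0}M_z=M_{B_0}C_{B_0}$ together with the finiteness of $\|M_z\|_{H^2_\beta}$, so that it is absorbed by the coefficient structure already carried by the weighted composition operator $M_{B_0'}C_{B_0}$, rather than bounding the three operator norms separately.
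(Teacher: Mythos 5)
Your preparatory steps are correct and are in substance the paper's own mechanism in basis-free clothing: the coefficient identity $\|Jg\|_{H^2_{\widetilde\beta}}=\|D_{\widetilde w}g\|_{H^2_{\beta}}$, applied to $g=(B_0^{i+1})'/\widetilde\beta_i=M_{B_0'}(B_0^i/\beta_i)$, is exactly the identity the paper imports from Proposition 2.15 of \cite{Hou}, and your orthogonal splitting $f=f(0)\cdot 1+J(f')$ corresponds to the paper's block decomposition of the Gram matrix of $\{(B_0)^n/\widetilde\beta_n\}_{n\ge 0}$ into $1\oplus(\cdots)$. The genuine gap is exactly where you locate it: the decisive inequality $N=\|D_{\widetilde w}M_{B_0'}C_{B_0}D_{\widetilde w}^{-1}\|_{H^2_{\beta}}\le\|D_{\widetilde w}\|\,\|M_{B_0'}\|_{H^2_{\beta}}\,\|C_{B_0}\|_{H^2_{\beta}}$ is never proved, and the plan you sketch for it --- pushing $D_{\widetilde w}^{-1}$ past $M_{B_0'}C_{B_0}$ using $C_{B_0}M_z=M_{B_0}C_{B_0}$ --- does not lead anywhere, since a general diagonal operator $D_{\widetilde w}$ has no usable commutation relation with either a multiplication or a composition operator. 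As written, the proposal establishes only $\|C_{B_0}\|_{H^2_{\widetilde\beta}}\le\max\{1,N\}$ with $N$ uncontrolled.

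What actually closes the gap is much less than the ``real work'' you anticipate. Submultiplicativity gives $N\le\|D_{\widetilde w}M_{B_0'}C_{B_0}\|_{H^2_{\beta}}\cdot\|D_{\widetilde w}^{-1}\|\le\|D_{\widetilde w}\|\,\|M_{B_0'}\|_{H^2_{\beta}}\,\|C_{B_0}\|_{H^2_{\beta}}\cdot\|D_{\widetilde w}^{-1}\|$, and the paper in effect discards the last factor: in its Gram-matrix formulation the two copies of $D_{\widetilde w}^{-1}$ appear as an outer conjugation and are dropped via $\|D_{\widetilde w}^{-1}\Gamma D_{\widetilde w}^{-1}\|\le\|\Gamma\|$, which is precisely the assertion $\|D_{\widetilde w}^{-1}\|\le 1$, i.e.\ $\widetilde w_k=\tfrac{k+1}{k}w_k\ge 1$ for all $k$. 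So the missing step is a one-line normalization, not a commutation argument. Be aware, though, that $\widetilde w_k\ge 1$ does not follow from the stated hypothesis $w_k\to 1$ alone (take $w_k=1-k^{-1/2}$); it does hold for the weight sequences $w^{(j,1)},w^{(j,2)}\ge 1$ to which the lemma is applied in Proposition \ref{NormB}, and the paper's own proof is equally silent on this point. If you either add the hypothesis $\widetilde w_k\ge 1$ or carry the extra factor $\|D_{\widetilde w}^{-1}\|$ into the conclusion, your argument closes and becomes a clean equivalent of the paper's.
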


\begin{proof}
Following from the proof of Proposition 2.15 in \cite{Hou}, one can see that
\[
\langle  \frac{(B_{0})^{i+1}}{\widetilde{\beta_i}}, \frac{(B_{0})^{j+1}}{\widetilde{\beta_j}}\rangle_{H^2_{\widetilde{\beta}}}=
\langle  DD_wM_{B'_{0}}\left(\frac{(B_{0})^{i}}{\beta_i}\right), DD_wM_{B'_{0}}\left(\frac{(B_{0})^{j}}{\beta_j}\right) \rangle_{H^2_{\beta}},
\]
and then
\[
\Gamma_{H^2_{\widetilde{\beta}}}\left(\left\{\frac{(B_{0})^{n+1}}{\widetilde{\beta}_n}\right\}_{n=0}^{\infty}\right)
=\Gamma_{H^2_{\beta}}\left(\left\{DD_wM_{B'_{0}}\left(\frac{(B_{0})^{n}}{\beta_n}\right)\right\}_{n=0}^{\infty}\right).
\]
Notice that
\[
\Gamma_{H^2_{\widetilde{\beta}}}\left(\left\{\frac{(B_{0})^{n}}{\widetilde{\beta}_n}\right\}_{n=0}^{\infty}\right)=\begin{bmatrix}
1 & 0 \\
0   & \Gamma_{H^2_{\widetilde{\beta}}}(\{\frac{B_{0}^{n}}{\widetilde{\beta}_n}\}_{n=1}^{\infty})
\end{bmatrix}=\begin{bmatrix}
1 & 0 \\
0   & D^{-1}_{\widetilde{w}}\Gamma_{H^2_{\widetilde{\beta}}}(\{\frac{B_{0}^{n+1}}{\widetilde{\beta}_n}\}_{n=0}^{\infty})D^{-1}_{\widetilde{w}}.
\end{bmatrix}
\]
Then,
\begin{align*}
\|X_{\mathfrak{F}}\|^2_{H^2_{\widetilde{\beta}}}
=&\left\|\Gamma_{H^2_{\widetilde{\beta}}}\left(\left\{\frac{(B_{0})^{n}}{\widetilde{\beta}_n}\right\}_{n=0}^{\infty}\right)\right\| \\
=&\max\left\{1,\left\|D^{-1}_{\widetilde{w}}\Gamma_{H^2_{\widetilde{\beta}}}\left(\left\{
\frac{(B_{0})^{n+1}}{\widetilde{\beta}_n}\right\}_{n=0}^{\infty}\right)D^{-1}_{\widetilde{w}}\right\|\right\}.
\end{align*}
Since
\begin{align*}
\left\|D^{-1}_{\widetilde{w}}\Gamma_{H^2_{\widetilde{\beta}}}\left(\left\{\frac{(B_{0})^{n+1}}{\widetilde{\beta}_n}\right\}_{n=0}^{\infty}\right)D^{-1}_{\widetilde{w}}\right\|
\le&\left\|\Gamma_{H^2_{\widetilde{\beta}}}\left(\left\{\frac{(B_{0})^{n+1}}{\widetilde{\beta}_n}\right\}_{n=0}^{\infty}\right)\right\| \\
=&\left\|\Gamma_{H^2_{\beta}}\left(\left\{DD_wM_{B'_{0}}\left(\frac{(B_{0})^{n}}{\beta_n}\right)\right\}_{n=0}^{\infty}\right)\right\| \\
=&\|D_{\widetilde{w}}M_{B'_{0}}X_{\mathfrak{F}}\|^2_{H^2_{\beta}} \\
\le&(\|D_{\widetilde{w}}\|_{H^2_{\beta}} \|M_{B'_{0}}\|_{H^2_{\beta}} \|X_{\mathfrak{F}}\|_{H^2_{\beta}})^2 \\
=&(\|D_{\widetilde{w}}\| \|M_{B'_{0}}\|_{H^2_{\beta}} \|C_{B_0}\|_{H^2_{\beta}})^2,
\end{align*}
we have
\begin{align*}
\|C_{B_0}\|_{H^2_{\widetilde{\beta}}}=&\|X_{\mathfrak{F}}\|_{H^2_{\widetilde{\beta}}} \\
\le&\max\{1,\|D_{\widetilde{w}}\| \|M_{B'_{0}}\|_{H^2_{\beta}} \|C_{B_0}\|_{H^2_{\beta}}\} \\
=&\|D_{\widetilde{w}}\| \|M_{B'_{0}}\|_{H^2_{\beta}} \|C_{B_0}\|_{H^2_{\beta}}.
\end{align*}
\end{proof}

The following result of C. Cowen \cite{C90} is also useful in our estimation.
\begin{theorem}[Theorem 7 in \cite{C90}]\label{Controll}
Suppose that $H^2_{\beta}$ and $H^2_{\beta'}$ are weighted Hardy spaces and
$$
w_{k+1}=\frac{\beta_{k+1}}{\beta_k}\geq \frac{\beta'_{k+1}}{\beta'_k}=w'_{k+1}, \ \ \ \text{for} \ k=0,1,2,\ldots.
$$
Let $\psi(z)$ be an analytic function on $\mathbb{D}$ with $\psi(\mathbb{D})\subseteq \mathbb{D}$ and $\psi(0)=0$. If $C_{\psi}$ is bounded on $H^2_{\beta}$, then $C_{\psi}$ is bounded on $H^2_{\beta'}$ and $\|C_{\psi}\|_{H^2_{\beta}}\geq\|C_{\psi}\|_{H^2_{\beta'}}$.
\end{theorem}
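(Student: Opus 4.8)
The plan is to use the two hypotheses separately: the weight comparison to rewrite the target norm, and $\psi(0)=0$ to control $C_\psi$ on the pieces of that rewriting. Since $\beta_0=\beta'_0=1$ by definition, observe first that $w_{k+1}\ge w'_{k+1}$ says exactly that $s_k:=(\beta'_k)^2/\beta_k^2$ is a non-increasing sequence of positive numbers with $s_0=1$, and that $\psi(0)=0$ makes $C_\psi$ ``lower triangular'' in the monomial basis, since $(\psi(z))^n$ has Taylor expansion beginning at order $n$, so $C_\psi z^n\in\overline{\operatorname{span}}\{z^n,z^{n+1},\ldots\}$.

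First I would record an averaging representation of the $\beta'$-norm. Put $a_N:=s_N-s_{N+1}\ge 0$ for $N\ge 0$ and $a_\infty:=\lim_N s_N\ge 0$, so that $a_\infty+\sum_{N\ge 0}a_N=1$ and $s_k=a_\infty+\sum_{N\ge k}a_N$. Let $P_N$ be the projection of $H^2_\beta$ onto $\operatorname{span}\{1,z,\ldots,z^N\}$, which is orthogonal because the monomials are mutually orthogonal in every weighted Hardy space. Interchanging two sums in a finite expression, every polynomial $f$ should satisfy
\[
\|f\|_{H^2_{\beta'}}^2=\sum_k|\widehat{f}(k)|^2\beta_k^2\,s_k=a_\infty\|f\|_{H^2_\beta}^2+\sum_{N=0}^{\infty}a_N\|P_Nf\|_{H^2_\beta}^2 ,
\]
so the $\beta'$-norm becomes a probabilistic average of the full $\beta$-norm and the $\beta$-norms of the Taylor sections of $f$; by monotone convergence the same identity extends to all $f\in H^2_\beta$ (and $s_k\le 1$ already gives $H^2_\beta\subseteq H^2_{\beta'}$).

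Next I would use triangularity. Comparing Taylor coefficients, and using $P_N(\psi(z))^k=0$ for $k>N$, one gets $P_NC_\psi=P_NC_\psi P_N$ on polynomials; hence, $P_N$ being a contraction on $H^2_\beta$ and $C_\psi$ bounded there,
\[
\|P_NC_\psi f\|_{H^2_\beta}=\|P_NC_\psi P_Nf\|_{H^2_\beta}\le\|C_\psi\|_{H^2_\beta}\,\|P_Nf\|_{H^2_\beta}.
\]
Inserting this bound, together with $\|C_\psi f\|_{H^2_\beta}\le\|C_\psi\|_{H^2_\beta}\|f\|_{H^2_\beta}$, into the averaging identity applied to $C_\psi f\in H^2_\beta$ should yield $\|C_\psi f\|_{H^2_{\beta'}}\le\|C_\psi\|_{H^2_\beta}\|f\|_{H^2_{\beta'}}$ for every polynomial $f$. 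Since polynomials are dense in $H^2_{\beta'}$, a routine closure argument (norm convergence forces coefficientwise, hence locally uniform, convergence, so the bounded extension really is $f\mapsto f\circ\psi$) then gives that $C_\psi$ is bounded on $H^2_{\beta'}$ with $\|C_\psi\|_{H^2_{\beta'}}\le\|C_\psi\|_{H^2_\beta}$.

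The hard part is conceptual rather than computational: one must recognize that the weight inequality is exactly what turns the $\beta'$-norm into a positive average of truncated $\beta$-norms, and that $\psi(0)=0$ is exactly what makes $C_\psi$ respect the truncation projections $P_N$ one-sidedly. The remaining points — the tail term $a_\infty$ (the case $\lim_k\beta'_k/\beta_k>0$), the interchange of summation with the limit, and identifying the density extension with $C_\psi$ — should be routine.
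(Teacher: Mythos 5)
Your proof is correct, but note that the paper itself contains no proof of this statement: it is quoted as Theorem~7 of Cowen's article \cite{C90} and used as a black box, so the only thing to compare against is Cowen's original argument, which proceeds via Hadamard (Schur) multiplication. There, one sets $t_k=\beta'_k/\beta_k$, observes that the weight hypothesis makes $t$ non\-increasing with $t_0=1$ while $\psi(0)=0$ makes the matrix of $C_\psi$ lower triangular, and then notes that conjugating by $D_\beta$ and $D_{\beta'}$ turns the matrix of $C_\psi$ on $H^2_{\beta'}$ into the entrywise product of its matrix on $H^2_\beta$ with the lower-triangular array $(t_j/t_k)_{j\ge k}$, a Schur multiplier of norm at most one. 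Your route is genuinely different in presentation and more self-contained: the Abel-summation identity $\mathrm{diag}(s_k)=a_\infty I+\sum_N a_N P_N$ with $s_k=t_k^2$ is precisely the positive decomposition that underlies the Schur-multiplier bound, but you feed it directly into the norm of $C_\psi f$ via the one-sided intertwining $P_NC_\psi=P_NC_\psi P_N$, so no multiplier theory is invoked. What this buys is elementarity and transparency about where each hypothesis enters (monotonicity of $s_k$ gives positivity of the $a_N$; $\psi(0)=0$ gives the triangularity); what Cowen's formulation buys is a general transference principle applicable to any lower-triangular operator, not just composition operators. All the steps you flag as routine are indeed routine: the interchange of sums is Tonelli for nonnegative terms, $s_0=1$ follows from the normalization $\beta_0=\beta'_0=1$, the $P_N$ are contractive orthogonal projections because the monomials are orthogonal in every weighted Hardy space, and the identification of the bounded extension with $f\mapsto f\circ\psi$ uses bounded point evaluations on $H^2_{\beta'}$, which hold under the standing assumption $w'_k\to1$ (and are built into the definition of a weighted Hardy space in Cowen's setting). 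I see no gap.
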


Now, based on all of the previous preliminaries, we can estimate the norms of the composition operators induced by $B_0$ and disk automorphisms acting on a weighted Hardy space of $N$-polynomial growth.

\begin{proposition}\label{NormB}
Let $H^2_{\beta}$ be a weighted Hardy space of $N$-polynomial growth. Then
\[
\|C_{B_0}\|_{H^2_{\beta}}\le\frac{K_2K_1^N\prod_{j=1}^{N}(j^3+5j^2+2j)}{(1-|z_0|)^{N(N+1)}},
\]
where $K_1$ and $K_2$ are positive constants.
\end{proposition}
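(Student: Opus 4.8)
The plan is to descend from $H^2_{\beta}$ through a finite chain of weighted Hardy spaces and run the telescoping inequality of Lemma~\ref{n+1} along the chain. Fix a weighted Hardy space $H^2_{\beta}$ of $N$-polynomial growth with weight sequence $w=\{w_k\}$ (so $w_k\le\frac{k+N+1}{k+1}$), and fix $z_0\in\mathbb{D}\setminus\{0\}$. For $j=0,1,\dots,N$ let $\beta^{(j)}$ be the weight sequence with $\beta^{(j)}_n=\beta_n/(n+1)^j$, so $\beta^{(0)}=\beta$ and $\widetilde{\beta^{(j)}}=\beta^{(j-1)}$ in the notation of Lemma~\ref{n+1}; the weights of $H^2_{\beta^{(j)}}$ are $w^{(j)}_k=w_k\bigl(\tfrac{k}{k+1}\bigr)^j$, which tend to $1$, so Lemma~\ref{n+1} applies to each $H^2_{\beta^{(j)}}$. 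The computational heart of the argument is the weight estimate
\[
w^{(j)}_k\ \le\ \frac{k+N-j}{k}\qquad(0\le j\le N,\ k\ge 1),
\]
which, after clearing denominators, reduces to $(k+N+1)k^{\,j+1}\le(k+N-j)(k+1)^{\,j+1}$; expanding $(k+1)^{\,j+1}$ by the binomial theorem, the left side equals $k^{\,j+2}+(N+1)k^{\,j+1}$, which are exactly the two top terms of the right side, and every remaining term on the right is non-negative because $N-j\ge 0$. In particular $w^{(N)}_k\le 1$ for all $k$, i.e.\ $H^2_{\beta^{(N)}}$ has weights bounded by those of $H^2$.

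First I would settle the bottom of the chain. Since $B_0=z\varphi_{z_0}$ is a Blaschke product of order two with $B_0(0)=0$, the operator $C_{B_0}$ is an isometry on the classical Hardy space $H^2$, so $\|C_{B_0}\|_{H^2}=1$; as $w^{(N)}_k\le 1$ for every $k$ and $B_0(0)=0$, Cowen's Theorem~\ref{Controll} gives $\|C_{B_0}\|_{H^2_{\beta^{(N)}}}\le\|C_{B_0}\|_{H^2}=1$. Next, applying Lemma~\ref{n+1} with $\beta$ replaced by $\beta^{(j)}$ (so $\widetilde{\beta^{(j)}}=\beta^{(j-1)}$) successively for $j=N,N-1,\dots,1$ and telescoping,
\[
\|C_{B_0}\|_{H^2_{\beta}}\ \le\ \Bigl(\prod_{j=1}^{N}\|D_{\widetilde{w}}\|_{H^2_{\beta^{(j)}}}\cdot\|M_{B_0'}\|_{H^2_{\beta^{(j)}}}\Bigr)\,\|C_{B_0}\|_{H^2_{\beta^{(N)}}}\ \le\ \prod_{j=1}^{N}\|D_{\widetilde{w}}\|_{H^2_{\beta^{(j)}}}\cdot\|M_{B_0'}\|_{H^2_{\beta^{(j)}}},
\]
where $D_{\widetilde{w}}$ is the diagonal operator of Lemma~\ref{n+1} formed from the weight sequence $\beta^{(j)}$.

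It remains to estimate the two factors at level $j$. A direct computation shows that the diagonal entries of $D_{\widetilde{w}}$ for $\beta^{(j)}$ are $w^{(j-1)}_{k+1}$ ($k\ge 0$), which by the weight estimate are $\le\frac{k+N-j+2}{k+1}\le N-j+2$, hence $\|D_{\widetilde{w}}\|_{H^2_{\beta^{(j)}}}\le N-j+2$. For $\|M_{B_0'}\|_{H^2_{\beta^{(j)}}}$ I would use the identity $B_0'(z)=\varphi_{z_0}(z)-\dfrac{(1-|z_0|^2)\,z}{(1-\overline{z_0}z)^2}$ together with the fact that the proof of Lemma~\ref{Mnorm} uses only the inequality $w_n\le\frac{n+m}{n}$ and the monotonicity of $\{\frac{n+m}{n}\}_n$: applying it to $H^2_{\beta^{(j)}}$ with $m=N-j$ yields $\|M_z\|_{H^2_{\beta^{(j)}}}\le N-j+1$, $\|M_{\frac{1}{1-\overline{z_0}z}}\|_{H^2_{\beta^{(j)}}}\le(1-|z_0|)^{-(N-j+1)}$ and $\|M_{\varphi_{z_0}}\|_{H^2_{\beta^{(j)}}}\le(N-j+2)(1-|z_0|)^{-(N-j+1)}$. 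Combining these via $M_{fg}=M_fM_g$, $1-|z_0|^2\le 2(1-|z_0|)$ and $1-|z_0|\le 1$ gives $\|M_{B_0'}\|_{H^2_{\beta^{(j)}}}\le\dfrac{3(N-j)+4}{(1-|z_0|)^{2(N-j)+1}}$.

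Finally, substituting $i=N-j$ and multiplying,
\[
\|C_{B_0}\|_{H^2_{\beta}}\ \le\ \prod_{i=0}^{N-1}\frac{(i+2)(3i+4)}{(1-|z_0|)^{2i+1}}\ =\ \frac{\prod_{i=0}^{N-1}(i+2)(3i+4)}{(1-|z_0|)^{N^2}}\ \le\ \frac{\prod_{j=1}^{N}(j^3+5j^2+2j)}{(1-|z_0|)^{N(N+1)}},
\]
using $\sum_{i=0}^{N-1}(2i+1)=N^2\le N(N+1)$ (and $1-|z_0|\le 1$) together with $(i+2)(3i+4)=3(i+1)^2+4(i+1)+1\le(i+1)^3+5(i+1)^2+2(i+1)$ for $i\ge 0$; this is the asserted bound, and the constants $K_1,K_2$ in the statement merely absorb any coarser inequalities one may prefer to the sharp ones above. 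The main difficulty is the set-up rather than the arithmetic: iterating Lemma~\ref{n+1} \emph{upward} from $H^2$ is useless, since its $j$-th iterate has weights $(n+1)^j$, which are exponentially far from being of $j$-polynomial growth. Descending instead forces one to locate the correct bottom space — after $N$ divisions by $(n+1)$ the weights do fall below $1$, so the bottom estimate collapses via Cowen's theorem — and, crucially, to verify that the effective polynomial degree of $\beta^{(j)}$ is exactly $N-j$ (not merely $\le N$): if it were only bounded by $N$ at every level, the exponent of $(1-|z_0|)^{-1}$ in the product would grow like $2N^2$ rather than staying at $N(N+1)$.
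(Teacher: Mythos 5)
Your argument is correct, and it reaches a slightly stronger conclusion (the bound holds with $K_1=K_2=1$), but it runs the machinery in the opposite direction from the paper. The paper iterates Lemma~\ref{n+1} \emph{upward} along the fixed model chain $H^2=D^2_{0,2}\to D^2_{1,2}\to\cdots\to D^2_{N,2}$ and only at the very end compares $H^2_{\beta}$ with $D^2_{N,1}$ via Cowen's Theorem~\ref{Controll}; since the $M_{B_0'}$ estimates of Lemma~\ref{Mnorm} are naturally stated for the binomial weights $\beta^{(j,1)}$ while the telescoping lives on the weights $(n+1)^j$, the paper must shuttle between the two models $D^2_{j,1}$ and $D^2_{j,2}$, and this is precisely where the constants $K_1^N$ and $K_2$ enter. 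You instead build the descending chain $\beta^{(j)}_n=\beta_n/(n+1)^j$ out of $\beta$ itself, apply Cowen once at the bottom (where $w^{(N)}_k\le 1$ gives $\|C_{B_0}\|_{H^2_{\beta^{(N)}}}\le\|C_{B_0}\|_{H^2}=1$), and telescope back up to $\beta$. The enabling observation, which I checked and which is the genuinely new ingredient here, is the inequality $w^{(j)}_k\le\frac{k+N-j}{k}$: it lets you run the proof of Lemma~\ref{Mnorm} directly on each intermediate space with effective degree $N-j$, so no norm-equivalence constants are needed and the exponent of $(1-|z_0|)^{-1}$ comes out as $N^2\le N(N+1)$. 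Two small remarks. First, you are invoking the \emph{proof} of Lemma~\ref{Mnorm} rather than its statement (the statement assumes two-sided bounds on $w_k$, while your spaces $H^2_{\beta^{(j)}}$ only come with the upper bound $\frac{k+N-j}{k}$); you say this explicitly, and it is legitimate, but it should be flagged as such in a write-up. Second, your closing claim that iterating Lemma~\ref{n+1} upward from $H^2$ is ``useless'' is overstated --- that is exactly what the paper does, and it works because the paper pays for the mismatch between $(n+1)^j$ and the binomial weights with the equivalence constants $K_1,K_2$; your route simply avoids that cost.
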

\begin{proof}
Obviously,
\[
B'_{0}(z)=\left( z\cdot\frac{z_0-z}{1-\overline{z_0}z}\right)'=\frac{z_0-2z+\overline{z_0}z^2}{(1-\overline{z_0}z)^2}.
\]
By Lemma \ref{Mnorm}, for $j=1,2,\ldots,N$,
\begin{align*}
\|M_{B'_{0}}\|_{D^2_{j-1,1}}\leq&(|z_0|+2\|M_{z}\|_{D^2_{j-1,1}}+|z_0|\|M_{z^2}\|_{D^2_{j-1,1}})\|M_{\frac{1}{1-\overline{z_0}z}}\|^2_{D^2_{j-1,1}} \\
\le& \frac{1+2j+j(j+1)/2}{(1-|z_0|)^{2j}} \\
=&\frac{j^2+5j+2}{2(1-|z_0|)^{2j}}.
\end{align*}
Moreover, since both $D_{N,1}^2$ and $D_{N,2}^2$ are the weighted Dirichlet space $D_{N}^2$ with equivalent norm, there exists constants $K_1>0$ and $K_2>0$ such that
\[
\|M_{B'_{0}}\|_{D^2_{j-1,2}}\le K_1\|M_{B'_{0}}\|_{D^2_{j-1,1}}\le \frac{K_1(j^2+5j+2)}{2(1-|z_0|)^{2j}}
\]
and
\[
\|C_{B_0}\|_{D^2_{N,1}}\le K_2\|C_{B_0}\|_{D^2_{N,2}}.
\]
In addition, for $j=1,2,\ldots,N$,
\[
\|DD_{w^{(j-1,1)}}\|=2j.
\]
Then, by Lemma \ref{n+1} and $C_{B_0}$ being an isometry on the classical Hardy space,
\begin{align*}
\|C_{B_0}\|_{D^2_{N,2}}\le&  \prod\limits_{j=1}^{N}\|DD_{w^{(j-1,1)}}\| \cdot  \prod\limits_{j=1}^{N}\|M_{C_{B'_0}}\|_{D^2_{j-1,2}}\cdot \|C_{B_0}\|_{D^2_{0,2}} \\
\le&\prod\limits_{j=1}^{N}(2j)\cdot\prod\limits_{j=1}^{N}\frac{K_1(j^2+5j+2)}{2(1-|z_0|)^{2j}}\cdot\|C_{B_0}\| \\
=&\frac{K_1^N\prod_{j=1}^{N}(j^3+5j^2+2j)}{(1-|z_0|)^{N(N+1)}}.
\end{align*}
Thus, it follows from Theorem \ref{Controll} that
\[
\|C_{B_0}\|_{H^2_{\beta}}\le\|C_{B_0}\|_{D^2_{N,1}}\le K_2\|C_{B_0}\|_{D^2_{N,2}}\le\frac{K_2K_1^N\prod_{j=1}^{N}(j^3+5j^2+2j)}{(1-|z_0|)^{N(N+1)}}.
\]
\end{proof}

\begin{theorem}\label{NormC}
Let $H^2_{\beta}$ be a weighted Hardy space of $N$-polynomial growth induced by the weight sequence $w=\{w_k\}_{k=1}^{\infty}$.  Then for any $\varphi\in {\rm Aut(\mathbb{D})}$, there exists a positive number $K$ such that
\[
\|C_\varphi\|_{H^2_{\beta}}\leq \frac{K}{(1-|z_0|)^{2N^2+3N+2}}.
\]
\end{theorem}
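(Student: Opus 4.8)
The plan is to chain together the estimates of the preceding lemmas and propositions, so that the only genuine work is bookkeeping. First I would reduce to the symbol $\varphi_{z_0}$ exactly as in the proof of Theorem \ref{C}: writing $\varphi=\mathrm{e}^{\mathbf{i}\theta}\varphi_{z_0}$ and using that $C_{\mathrm{e}^{\mathbf{i}\theta}z}$ is an isometry on $H^2_{\beta}$, one has $\|C_{\varphi}\|_{H^2_{\beta}}=\|C_{\varphi_{z_0}}\|_{H^2_{\beta}}$. If $z_0=0$ then $\varphi_{z_0}(z)=-z$ and $C_{\varphi_{z_0}}$ is an isometry, so the asserted bound holds as long as $K\ge 1$; hence I would assume $z_0\in\mathbb{D}\setminus\{0\}$, which is exactly what is needed for the apparatus built on $B_0(z)=z\varphi_{z_0}(z)$. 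Theorem \ref{C} then reduces the problem to bounding the three factors in
\[
\|C_{\varphi_{z_0}}\|_{H^2_{\beta}}\le\|X_{\widetilde{\mathfrak{F}}_{1}}\|_{H^2_{\beta}}\,\|X_{\widetilde{\mathfrak{F}}_{2}}\|_{H^2_{\beta^{-1}}}\,\|D^{-1}_{tm}\|.
\]

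The crucial preliminary remark is that $H^2_{\beta^{-1}}$ is again of $N$-polynomial growth: its weight sequence is $\{w_k^{-1}\}$, and taking reciprocals in $\frac{k+1}{k+N+1}\le w_k\le\frac{k+N+1}{k+1}$ gives the same two-sided bound for $w_k^{-1}$. Thus Lemma \ref{Mnorm} and Proposition \ref{NormB} are legitimately available for $H^2_{\beta^{-1}}$, with the same constants $K_1,K_2$ (which depend only on $N$). Now I would estimate: by Lemma \ref{phiB} with $X_{\mathfrak{F}}=C_{B_0}$, by $\|M_z\|_{H^2_{\beta}}\le C_{N+1}^{1}=N+1$ from Lemma \ref{Mnorm}, and by Proposition \ref{NormB},
\[
\|X_{\widetilde{\mathfrak{F}}_{1}}\|_{H^2_{\beta}}\le(N+2)\,\|C_{B_0}\|_{H^2_{\beta}}\le\frac{(N+2)\,K_2K_1^{N}\prod_{j=1}^{N}(j^3+5j^2+2j)}{(1-|z_0|)^{N(N+1)}};
\]
likewise, since $\|M_{\varphi_{z_0}}\|_{H^2_{\beta^{-1}}}=\|M_{\frac{z_0-z}{1-\overline{z_0}z}}\|_{H^2_{\beta^{-1}}}\le\frac{N+2}{(1-|z_0|)^{N+1}}$ by Lemma \ref{Mnorm} and $(1-|z_0|)^{N+1}\le 1$, one gets $1+\|M_{\varphi_{z_0}}\|_{H^2_{\beta^{-1}}}\le\frac{N+3}{(1-|z_0|)^{N+1}}$, and hence, by Lemma \ref{phiB} and Proposition \ref{NormB} applied to $H^2_{\beta^{-1}}$,
\[
\|X_{\widetilde{\mathfrak{F}}_{2}}\|_{H^2_{\beta^{-1}}}\le\frac{(N+3)\,K_2K_1^{N}\prod_{j=1}^{N}(j^3+5j^2+2j)}{(1-|z_0|)^{(N+1)+N(N+1)}}=\frac{(N+3)\,K_2K_1^{N}\prod_{j=1}^{N}(j^3+5j^2+2j)}{(1-|z_0|)^{(N+1)^2}}.
\]
Finally, $N$-polynomial growth forces $w_k,w_k^{-1}\le\frac{N+2}{2}$ for all $k$, so $\sup_k(w_k^2+w_k^{-2})\le\frac{(N+2)^2}{2}$, and Lemma \ref{D-1tm} together with $1-|z_0|^2\ge 1-|z_0|$ gives $\|D^{-1}_{tm}\|\le\frac{\sqrt{1+(N+2)^2/2}}{1-|z_0|}$.

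Multiplying the three bounds, all the numerical factors coalesce into one constant $K$ depending only on $N$ (through $K_1,K_2$ and the explicit polynomials), while the exponents of $(1-|z_0|)^{-1}$ add up to $N(N+1)+(N+1)^2+1=(N^2+N)+(N^2+2N+1)+1=2N^2+3N+2$, which is precisely the claimed exponent; enlarging $K$ so that $K\ge 1$ also absorbs the case $z_0=0$. The main obstacle is thus essentially just keeping the exponent arithmetic straight and recording at the outset that $H^2_{\beta^{-1}}$ inherits the $N$-polynomial growth order, so that the $\beta^{-1}$-factor can genuinely be controlled by the same lemmas; no estimate beyond those already proved is needed.
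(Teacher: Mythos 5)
Your proposal is correct and follows essentially the same route as the paper: reduce to $\varphi_{z_0}$, apply Theorem \ref{C} together with Lemma \ref{phiB}, then bound the factors via Lemma \ref{Mnorm}, Proposition \ref{NormB} (applied to both $H^2_{\beta}$ and $H^2_{\beta^{-1}}$), and Lemma \ref{D-1tm}, with the exponents summing to $2N^2+3N+2$ exactly as in the paper's computation. Your explicit observation that $H^2_{\beta^{-1}}$ inherits $N$-polynomial growth (so that the lemmas apply to it) is a point the paper uses only implicitly, and is a welcome clarification rather than a deviation.
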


\begin{proof}
By Theorem \ref{C},
\[
\|C_\varphi\|_{H^2_{\beta}}\leq
(1+\|M_{z}\|_{H^2_{\beta}})(1+\|M_{\varphi}\|_{H^2_{\beta^{-1}}})\|X_{\mathfrak{F}}\|_{H^2_{\beta}} \|X_{\mathfrak{F}}\|_{H^2_{\beta^{-1}}} \|D^{-1}_{tm}\|,
\]
it suffices to estimate the items in the right of the above inequality.
Following from Lemma \ref{Mnorm}, one can see
\[
\|M_{z}\|_{H^2_{\beta}}\le N+1 \ \ \ \text{and} \ \ \ \|M_{\frac{z_0-z}{1-\overline{z_0}z}}\|_{H^2_{\beta}}\le \frac{N+2}{(1-|z_0|)^{N+1}}.
\]
By Proposition \ref{NormB}, we have
\[
\|X_{\mathfrak{F}}\|_{H^2_{\beta}}=\|C_{B_0}\|_{H^2_{\beta}}\le\frac{K_2K_1^N\prod_{j=1}^{N}(j^3+5j^2+2j)}{(1-|z_0|)^{N(N+1)}}
\]
and
\[
\|X_{\mathfrak{F}}\|_{H^2_{\beta^{-1}}}=\|C_{B_0}\|_{H^2_{\beta^{-1}}}\le \frac{K_2K_1^N\prod_{j=1}^{N}(j^3+5j^2+2j)}{(1-|z_0|)^{N(N+1)}}.
\]
Notice that
\[
\sqrt{1+|z_0|^2\sup\limits_{n}(w_n^2+w_n^{-2})}\le\sqrt{\sup\limits_{n}(1+w_n^2+w_n^{-2})}\le\sqrt{\sup\limits_{n}(w_n+w_n^{-1})^{2}}\le N+2.
\]
Then by Lemma \ref{D-1tm},
\[
\|D^{-1}_{tm}\|=\sup\limits_{n}\|D^{-1}_{n}\| \leq\frac{\sqrt{1+|z_0|^2\sup\limits_{n}(w_n^2+w_n^{-2})}}{1-|z_0|^2}\le\frac{N+2}{1-|z_0|^2}\le\frac{N+2}{1-|z_0|}.
\]
Therefore, if let
\[
K=K^2_2K_1^{2N}(N+2)^2(N+3)\prod_{j=1}^{N}(j^3+5j^2+2j)^2,
\]
we obtain that
\begin{align*}
\|C_\varphi\|_{H^2_{\beta}}\leq&
(1+\|M_{z}\|_{H^2_{\beta}})(1+\|M_{\varphi}\|_{H^2_{\beta^{-1}}})\|X_{\mathfrak{F}}\|_{H^2_{\beta}} \|X_{\mathfrak{F}}\|_{H^2_{\beta^{-1}}} \|D^{-1}_{tm}\| \\
\le& (N+2)\left(1+\frac{N+2}{(1-|z_0|)^{N+1}}\right)\left(\frac{K_2K_1^N\prod_{j=1}^{N}(j^3+5j^2+2j)}{(1-|z_0|)^{N(N+1)}} \right)^2\frac{N+2}{1-|z_0|} \\
\le& \frac{K^2_2K_1^{2N}(N+2)^2(N+3)\prod_{j=1}^{N}(j^3+5j^2+2j)^2}{(1-|z_0|)^{2N^2+3N+2}} \\
\le& \frac{K}{(1-|z_0|)^{2N^2+3N+2}}.
\end{align*}
\end{proof}

Obviously, if $H^2_{\beta}$ is a weighted Hardy space of $M$-polynomial growth for some $M>0$, then for any positive integer $N$ more than $M$, $H^2_{\beta}$ is also a weighted Hardy space of $N$-polynomial growth. Thus, we could estimate the norms of the composition operators induced by disk automorphisms acting on any weighted Hardy space of polynomial growth.

\section{Spectra of composition operators with symbols of disk automorphisms}

In this section, we study the spectra of composition operators with symbols of disk automorphisms acting on weighted Hardy spaces of polynomial growth.
Every element of $\textrm{Aut}(\mathbb{D})$ has two fixed points (counting multiplicity) in $\mathbb{C}$, and thus can be classified by the location of the fixed points.

\begin{definition}
For an element $\varphi$ in $\textrm{Aut}\mathbb{D}$,
\begin{enumerate}
\item \ $\varphi$ is said to be elliptic, if it has one fixed point in $\mathbb{D}$ and one in the complement of $\overline{\mathbb{D}}$;
\item \ $\varphi$ is said to be parabolic, if it has just one fixed point on the unit circle $\partial\mathbb{D}$ (of multiplicity $2$);
\item \ $\varphi$ is said to be hyperbolic, if it has two distinct fixed points on $\partial\mathbb{D}$.
\end{enumerate}
\end{definition}

Two disk automorphisms $\varphi$ and  $\psi$ are called conformally equivalent, if there exists a disk automorphism $\tau$ such that
\[
\psi=\tau\circ\varphi\circ\tau^{-1}.
\]
The main advantage of conformal equivalence is in the placement of the fixed points.
Every elliptic disk automorphism is conformally equivalent to one whose fixed point in $\mathbb{D}$ is $0$.

\begin{lemma}\label{elliptic}
Let $\varphi$ be an elliptic disk automorphism with fixed point $\alpha$ in $\mathbb{D}$. Then
$\varphi$ is conformally equivalent to $\psi(z)=\lambda z$, where $\lambda=\varphi'(\alpha)$.
\end{lemma}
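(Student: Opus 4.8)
The plan is to exhibit the conformal conjugation explicitly and then read off the value of $\lambda$ from the chain rule. Since $\alpha\in\mathbb{D}$ is the fixed point of $\varphi$, I would take $\tau=\varphi_{\alpha}$, i.e. $\tau(z)=\frac{\alpha-z}{1-\overline{\alpha}z}$, which is a disk automorphism carrying $0$ to $\alpha$ and $\alpha$ to $0$, and which satisfies $\tau^{-1}=\tau$. Set $\psi=\tau\circ\varphi\circ\tau^{-1}=\tau\circ\varphi\circ\tau$. Then $\psi$ is again a disk automorphism, and $\psi(0)=\tau(\varphi(\tau(0)))=\tau(\varphi(\alpha))=\tau(\alpha)=0$, so $0$ is a fixed point of $\psi$ lying in $\mathbb{D}$; hence $\psi$ is elliptic with fixed point $0$.

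Next I would invoke the elementary fact that the only disk automorphisms fixing $0$ are the rotations: if $\psi\in\mathrm{Aut}(\mathbb{D})$ with $\psi(0)=0$, then writing $\psi(z)=\mathrm{e}^{\mathbf{i}\theta}\frac{z_0-z}{1-\overline{z_0}z}$ forces $z_0=0$, so $\psi(z)=-\mathrm{e}^{\mathbf{i}\theta}z$, a rotation $\psi(z)=\lambda z$ with $|\lambda|=1$. (Alternatively, this follows from the Schwarz lemma applied to $\psi$ and $\psi^{-1}$.) This identifies $\psi$ as multiplication by a unimodular constant $\lambda$.

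Finally I would compute $\lambda$. Differentiating the identity $\psi=\tau\circ\varphi\circ\tau$ at $z=0$ and using the chain rule, together with $\tau(0)=\alpha$, $\varphi(\alpha)=\alpha$, $\tau(\alpha)=0$, gives
\[
\lambda=\psi'(0)=\tau'(\varphi(\tau(0)))\,\varphi'(\tau(0))\,\tau'(0)=\tau'(\alpha)\,\varphi'(\alpha)\,\tau'(0).
\]
A direct computation of the derivative of $\tau=\varphi_\alpha$ yields $\tau'(z)=\frac{-(1-|\alpha|^2)}{(1-\overline{\alpha}z)^2}$, so $\tau'(0)=-(1-|\alpha|^2)$ and $\tau'(\alpha)=\frac{-(1-|\alpha|^2)}{(1-|\alpha|^2)^2}=\frac{-1}{1-|\alpha|^2}$; multiplying, the factors $\pm(1-|\alpha|^2)$ cancel and $\tau'(\alpha)\tau'(0)=1$, leaving $\lambda=\varphi'(\alpha)$, as claimed. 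There is no real obstacle here; the only point requiring care is the bookkeeping in the chain-rule computation and the verification that $\tau'(\alpha)\tau'(0)=1$, which is just the derivative version of $\tau\circ\tau=\mathrm{id}$.
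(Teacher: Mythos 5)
Your proof is correct and complete: the conjugation by the involution $\tau=\varphi_\alpha$, the Schwarz-lemma identification of an origin-fixing automorphism as a rotation, and the chain-rule computation using $\tau'(\alpha)\tau'(0)=1$ all check out. The paper itself gives no proof, merely citing Lemma 2.1 of \cite{A2016}; your argument is the standard one found there, so there is nothing to compare beyond noting that you have supplied the details the paper omits.
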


This result is well-known, and one can find a proof in \cite{A2016} (Lemma 2.1 in it).

Every parabolic disk automorphism is conformally equivalent to one whose fixed point (of multiplicity $2$) is $1$.
\begin{lemma}[Lemma 4.1.2 in \cite{Pons2007}]\label{parabolic}
Let $\varphi$ be a parabolic disk automorphism. Then $\varphi$ is conformally equivalent to either
\[
\psi_1(z)=\frac{(1+\mathbf{i})z-1}{z+\mathbf{i}-1} \ \ \ \text{or} \ \ \ \psi_2(z)=\frac{(1-\mathbf{i})z-1}{z-\mathbf{i}-1}.
\]
\end{lemma}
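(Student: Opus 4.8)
The plan is to transport $\varphi$ to the upper half-plane, where parabolic automorphisms become translations, and then normalize. First I would reduce to the case where the fixed point of $\varphi$ is $1$: since $\varphi$ is parabolic it has a unique fixed point $\zeta\in\partial\mathbb{D}$, and replacing $\varphi$ by $\tau_0\circ\varphi\circ\tau_0^{-1}$ for any $\tau_0\in\mathrm{Aut}(\mathbb{D})$ with $\tau_0(\zeta)=1$ produces a conformally equivalent parabolic automorphism whose fixed point is $1$. Since conformal equivalence is transitive, it suffices to treat this normalized $\varphi$.

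Next I would apply the Cayley transform $C(z)=\mathbf{i}\,\frac{1+z}{1-z}$, a biholomorphism of $\mathbb{D}$ onto the upper half-plane $\mathbb{H}=\{w:\mathrm{Im}\,w>0\}$ with $C(1)=\infty$ and $C^{-1}(w)=\frac{w-\mathbf{i}}{w+\mathbf{i}}$; conjugation by $C$ identifies $\mathrm{Aut}(\mathbb{D})$ with $\mathrm{Aut}(\mathbb{H})$, so conformal equivalence in $\mathbb{D}$ corresponds to conjugacy in $\mathrm{Aut}(\mathbb{H})$. The map $\Phi:=C\circ\varphi\circ C^{-1}$ fixes $\infty$, hence is affine, $\Phi(w)=aw+b$ with $a>0$ and $b\in\mathbb{R}$; parabolicity of $\varphi$ (a double fixed point at $\infty$, none in $\mathbb{H}$) forces $a=1$, and $\varphi\neq\mathrm{id}$ forces $b\neq 0$, so $\Phi$ is the translation $w\mapsto w+b$. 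Conjugating by the dilation $\delta_t(w)=tw$ with $t>0$ (an automorphism of $\mathbb{H}$, corresponding under $C$ to a disk automorphism fixing $\pm 1$) turns $\Phi$ into $w\mapsto w+tb$, so with $t=2/|b|$ we see that $\Phi$ is conjugate in $\mathrm{Aut}(\mathbb{H})$ to $w\mapsto w+2$ if $b>0$ and to $w\mapsto w-2$ if $b<0$. The sign of $b$ cannot be changed this way, because every automorphism of $\mathbb{H}$ fixing $\infty$ is orientation preserving with positive linear part; hence exactly these two possibilities occur.

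Finally I would match these two models with $\psi_1$ and $\psi_2$. Each of $C\circ\psi_1\circ C^{-1}$ and $C\circ\psi_2\circ C^{-1}$ is again a translation (it fixes $\infty$, and $\psi_j(1)=1$ with $\psi_j$ parabolic), so its translation amount is determined by evaluating at a single point, e.g. $w=\mathbf{i}=C(0)$: a short computation gives $C(\psi_1(0))=\mathbf{i}-2$ and $C(\psi_2(0))=\mathbf{i}+2$, so $C\circ\psi_1\circ C^{-1}$ is $w\mapsto w-2$ and $C\circ\psi_2\circ C^{-1}$ is $w\mapsto w+2$. Combined with the previous step, $\varphi$ is conformally equivalent to $\psi_1$ when $b<0$ and to $\psi_2$ when $b>0$; pulling the conjugating half-plane automorphism back by $C$ and composing with $\tau_0$ from the first step gives the desired $\tau\in\mathrm{Aut}(\mathbb{D})$. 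I expect no genuine analytic obstacle here; the only thing needing care is the bookkeeping of the successive conjugations and the observation that orientation preservation keeps the two translation directions from being identified.
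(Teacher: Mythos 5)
The paper does not prove this lemma at all --- it is quoted from Pons's thesis (Lemma 4.1.2 in \cite{Pons2007}) --- so there is no in-paper argument to compare against; your write-up is correct and is the standard proof (essentially the one in the cited source). The key steps all check: a parabolic automorphism conjugated to fix $1$ and transported by the Cayley transform becomes $w\mapsto w+b$ with $b\in\mathbb{R}\setminus\{0\}$ (parabolicity rules out $a\neq 1$), dilation conjugation rescales $b$ by a positive factor only, so the sign of $b$ is a conjugacy invariant and yields exactly the two models; and your evaluations $C(\psi_1(0))=\mathbf{i}-2$, $C(\psi_2(0))=\mathbf{i}+2$ are correct, identifying $\psi_1$ with $w\mapsto w-2$ and $\psi_2$ with $w\mapsto w+2$.
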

\begin{remark}\label{reparabolic}
After a straight computation, one can see
\[
\psi_1(z)= \psi_2^{-1}(z),
\]
and for any positive integer $n$,
\[
\psi_1^n(z)=\underset{n}{\underbrace{\psi_1\circ\cdots\circ\psi_1}}(z)=\frac{(n+\mathbf{i})z-n}{nz+\mathbf{i}-n},
\]
and
\[
\psi_2^n(z)=\underset{n}{\underbrace{\psi_2\circ\cdots\circ\psi_2}}(z)=\frac{(n-\mathbf{i})z-n}{nz-\mathbf{i}-n}.
\]
Moreover, for any positive integer $n$, $\psi_1^n(z)$ is conformally equivalent to $\psi_1(z)$, and $\psi_2^n(z)$ is conformally equivalent to $\psi_2(z)$. In fact, let
\[
\tau_n(z)=\frac{(n+1)z+(n-1)}{(n-1)z+(n+1)},
\]
then
\[
\tau_n\circ\psi_1\circ\tau_n^{-1}=\psi_1^n \ \ \ \text{and} \ \ \ \tau_n\circ\psi_2\circ\tau_n^{-1}=\psi_2^n.
\]
\end{remark}

Every hyperbolic disk automorphism is conformally equivalent to one whose fixed points in $\partial\mathbb{D}$ are $1$ and $-1$.

\begin{lemma}[Theorem 6 in \cite{Nor}]\label{hyperboic}
Let $\varphi$ be a hyperbolic disk automorphism. Then, $\varphi$ is conformally equivalent to
\[
\psi(z)=\frac{z+r}{1+rz},
\]
where $r=|\varphi(0)|$.
\end{lemma}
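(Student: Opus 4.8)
The plan is to construct the conjugating automorphism explicitly: first normalize the two boundary fixed points of $\varphi$ so that they become $1$ and $-1$, and then identify the one–parameter subgroup of ${\rm Aut}(\mathbb{D})$ that fixes $1$ and $-1$. Recall the classification recalled above: $\varphi\in{\rm Aut}(\mathbb{D})$ extends to a M\"obius transformation of $\widehat{\mathbb{C}}$ with exactly two fixed points, and $\varphi$ being hyperbolic means these are two distinct points $\zeta_1,\zeta_2\in\partial\mathbb{D}$. A direct computation shows that $\varphi'$ at a boundary fixed point is a positive real number and that the two values $\varphi'(\zeta_1),\varphi'(\zeta_2)$ multiply to $1$; so we may label the fixed points so that $\mu:=\varphi'(\zeta_1)\in(0,1)$, i.e.\ $\zeta_1$ is the attracting one.

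First I would pick $\tau\in{\rm Aut}(\mathbb{D})$ with $\tau(\zeta_1)=1$ and $\tau(\zeta_2)=-1$; such a $\tau$ exists because ${\rm Aut}(\mathbb{D})$ acts transitively on ordered pairs of distinct points of $\partial\mathbb{D}$ --- for instance, compose the rotation $z\mapsto\overline{\zeta_1}z$, which carries $\zeta_1$ to $1$, with an automorphism fixing $1$ that carries the image of $\zeta_2$ to $-1$ (the subgroup fixing $1$ already acts transitively on $\partial\mathbb{D}\setminus\{1\}$). Then $\psi:=\tau\circ\varphi\circ\tau^{-1}\in{\rm Aut}(\mathbb{D})$ fixes both $1$ and $-1$. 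The next step is to show that any automorphism fixing $1$ and $-1$ has the form $\frac{z+r}{1+rz}$ with $r\in(-1,1)$: writing $\psi(z)={\rm e}^{\mathbf{i}\alpha}\frac{w_0-z}{1-\overline{w_0}z}$ and imposing $\psi(1)=1$ and $\psi(-1)=-1$ forces $w_0$ to be real and ${\rm e}^{\mathbf{i}\alpha}=-1$, so $\psi(z)=\frac{z-w_0}{1-w_0z}=\psi_r$ with $r=-w_0$. (Equivalently, conjugating by the Cayley map $z\mapsto\mathbf{i}\frac{1-z}{1+z}$, which sends $1\mapsto0$ and $-1\mapsto\infty$, turns $\psi$ into an automorphism of the upper half-plane fixing $0$ and $\infty$, hence a dilation $\zeta\mapsto\lambda\zeta$ with $\lambda>0$; transporting back gives $\psi_r$ with $r=\frac{1-\lambda}{1+\lambda}$, which is the cleanest route to the normal form.)

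Since $\varphi$ is not the identity, $r\ne0$, and after replacing $\tau$ by $(z\mapsto-z)\circ\tau$ if necessary --- which interchanges $\zeta_1$ with $\zeta_2$ and replaces $\psi_r$ by $\psi_{-r}$ --- we may assume $r\in(0,1)$. From $\psi_r'(1)=\frac{1-r}{1+r}$ together with conjugation-invariance of the multiplier one gets $r=\frac{1-\mu}{1+\mu}$, so $r$ is uniquely determined by $\varphi$; and since $\psi_r(0)=r$, this $r$ is exactly the modulus of the value at the origin of the normalized conjugate $\psi$ (so the stated formula $r=|\varphi(0)|$ holds as written once $\varphi$ already has its fixed points at $\pm1$). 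I expect the only genuine work to be the two normalizations: exhibiting $\tau$ with $\tau(\zeta_1)=1$, $\tau(\zeta_2)=-1$, and verifying that the subgroup of ${\rm Aut}(\mathbb{D})$ fixing both $1$ and $-1$ is precisely $\{\psi_r:r\in(-1,1)\}$. Both are elementary, and the Cayley-transform picture makes the second essentially immediate, reducing it to the fact that the automorphisms of the upper half-plane fixing $0$ and $\infty$ are the dilations.
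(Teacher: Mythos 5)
The paper does not prove this lemma at all: it is imported verbatim as Theorem~6 of Nordgren \cite{Nor}, so there is no internal proof to measure your argument against. Your proposal is the standard proof of the normal form and it is correct: transitivity of ${\rm Aut}(\mathbb{D})$ on ordered pairs of distinct boundary points moves the fixed points to $\pm 1$; the stabilizer of the pair $(1,-1)$ is exactly $\{\psi_r(z)=\frac{z+r}{1+rz}: r\in(-1,1)\}$ (your direct computation forcing $w_0$ real and ${\rm e}^{\mathbf{i}\alpha}=-1$ checks out, and the Cayley-transform reduction to dilations of a half-plane is indeed the cleaner route); and the sign of $r$ is fixed by the multiplier, $r=\frac{1-\mu}{1+\mu}$ with $\mu=\varphi'(\zeta_1)\in(0,1)$ at the attracting fixed point. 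The one substantive issue is the formula $r=|\varphi(0)|$, which you correctly flag: it is literally true only for the normalized conjugate $\psi$ (equivalently, when $0$ already lies on the geodesic joining the two fixed points of $\varphi$). In general $|\varphi(0)|=\rho\bigl(\tau(0),\psi_r(\tau(0))\bigr)\ge r$, since the hyperbolic displacement of $\psi_r$ attains its minimum $r$ precisely on its axis; the conjugation-invariant description of $r$ is the multiplier formula you derive. This imprecision in the quoted statement is harmless for its use in Theorem~\ref{Sh}, because $x\mapsto\frac{1+x}{1-x}$ is increasing, so replacing $r$ by the larger quantity $|\varphi(0)|$ only enlarges the annulus asserted there.
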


\begin{lemma}\label{similar}
Let $H^2_{\beta}$ be a weighted Hardy space of polynomial growth. If $\varphi$ and  $\psi$ are conformally equivalent disk automorphisms, then the induced composition operators $C_{\varphi}$ and $C_{\psi}$ acting on $H^2_{\beta}$ are similar.
\end{lemma}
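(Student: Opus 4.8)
The plan is to exploit the elementary algebraic identity relating composition operators to composition of symbols. If $\varphi$ and $\psi$ are conformally equivalent, so that $\psi = \tau\circ\varphi\circ\tau^{-1}$ for some $\tau\in\mathrm{Aut}(\mathbb{D})$, then for every $f\in\mathrm{Hol}(\mathbb{D})$ one has
\[
C_\psi(f) = f\circ\tau\circ\varphi\circ\tau^{-1} = C_{\tau^{-1}}\bigl(C_\varphi\bigl(C_\tau(f)\bigr)\bigr),
\]
so as operators on $\mathrm{Hol}(\mathbb{D})$ we get $C_\psi = C_{\tau^{-1}} C_\varphi C_\tau$. Since $C_\tau C_{\tau^{-1}} = C_{\tau^{-1}\circ\tau} = C_{\mathrm{id}}$ and likewise $C_{\tau^{-1}}C_\tau = C_{\mathrm{id}}$, the operator $C_\tau$ is invertible on $\mathrm{Hol}(\mathbb{D})$ with inverse $C_{\tau^{-1}}$, whence $C_\psi = C_\tau^{-1} C_\varphi C_\tau$. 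This is the desired similarity, provided everything is interpreted on $H^2_\beta$ rather than on $\mathrm{Hol}(\mathbb{D})$.

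\textbf{The one point that needs justification} is precisely the passage from $\mathrm{Hol}(\mathbb{D})$ to $H^2_\beta$: we must know that $C_\varphi$, $C_\psi$, $C_\tau$ and $C_{\tau^{-1}}$ all restrict to bounded operators on $H^2_\beta$, and that the algebraic identity $C_\psi = C_\tau^{-1} C_\varphi C_\tau$ (together with $C_\tau^{-1} = C_{\tau^{-1}}$) persists at the level of bounded operators on $H^2_\beta$. The boundedness is immediate: $\varphi$, $\psi$, $\tau$ and $\tau^{-1}$ are all disk automorphisms, so by Theorem \ref{Mz} (Theorem 3.1 in \cite{Hou}) each of the corresponding composition operators is a bounded isomorphism of $H^2_\beta$; in particular $C_\tau$ is invertible on $H^2_\beta$. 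The identity then transfers because $H^2_\beta$ embeds in $\mathrm{Hol}(\mathbb{D})$ and the operators agree pointwise on the dense subset of polynomials (or simply on all of $H^2_\beta$, viewed inside $\mathrm{Hol}(\mathbb{D})$): both $C_\psi$ and $C_\tau^{-1} C_\varphi C_\tau$ send $f$ to $f\circ\psi$.

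\textbf{Carrying this out}, the steps in order are: (i) write $\psi = \tau\circ\varphi\circ\tau^{-1}$ from the definition of conformal equivalence; (ii) invoke Theorem \ref{Mz} to record that $C_\varphi$, $C_\psi$, $C_\tau$, $C_{\tau^{-1}}$ are bounded on $H^2_\beta$ and that $C_\tau$ is invertible there with $C_\tau^{-1} = C_{\tau^{-1}}$ (the latter because $C_\tau C_{\tau^{-1}} = C_{\tau^{-1}\circ\tau} = I = C_{\tau\circ\tau^{-1}} = C_{\tau^{-1}}C_\tau$ on $H^2_\beta$); (iii) compute, for $f\in H^2_\beta$,
\[
\bigl(C_{\tau^{-1}} C_\varphi C_\tau\bigr)(f) = \bigl((f\circ\tau)\circ\varphi\bigr)\circ\tau^{-1} = f\circ(\tau\circ\varphi\circ\tau^{-1}) = f\circ\psi = C_\psi(f),
\]
so $C_\psi = C_{\tau^{-1}} C_\varphi C_\tau = C_\tau^{-1} C_\varphi C_\tau$, which is exactly the assertion that $C_\varphi$ and $C_\psi$ are similar. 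There is no real obstacle here; the content of the lemma is entirely carried by the already-cited Theorem \ref{Mz}, and the proof is a short verification. The only thing to be careful about is not to conflate composition of symbols with composition of operators in the wrong order — the "reversal" $C_{g\circ h} = C_h C_g$ must be applied consistently.
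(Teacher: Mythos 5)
Your proof is correct and follows essentially the same route as the paper: write $\psi=\tau\circ\varphi\circ\tau^{-1}$, invoke the invertibility of $C_\tau$ on $H^2_\beta$ from Theorem \ref{Mz}, and verify $C_\psi=C_\tau^{-1}C_\varphi C_\tau$ by the contravariance identity $C_{g\circ h}=C_hC_g$. The extra care you take about transferring the identity from ${\rm Hol}(\mathbb{D})$ to $H^2_\beta$ is sound and, if anything, slightly more explicit than the paper's one-line computation.
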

\begin{proof}
Since $\varphi$ and  $\psi$ are conformally equivalent, there exists a disk automorphism $\tau$ such that
\[
\psi=\tau\circ\varphi\circ\tau^{-1}.
\]
By Theorem \ref{C}, $C_{\tau}$ is an invertible bounded operator on $H^2_{\beta}$. For any $f\in H^2_{\beta}$, we have
\[
C_\tau^{-1}\circ C_\varphi\circ C_\tau (f) =C_{\tau^{-1}}\circ C_\varphi\circ C_\tau (f) =C_{\tau\circ\varphi\circ\tau^{-1}}(f)=C_\psi(f),
\]
then $C_{\varphi}$ and $C_{\psi}$ acting on $H^2_{\beta}$ are similar.
\end{proof}

\begin{theorem}\label{Se}
Let $H^2_{\beta}$ be a weighted Hardy space of polynomial growth.  If $\varphi$ is an elliptic disk automorphism with fixed point $\alpha$ in $\mathbb{D}$, then $C_{\varphi}$ acting on $H^2_{\beta}$ is a diagonal operator and hence it is a normal operator. Moreover, the spectrum of $C_{\varphi}$ acting on $H^2_{\beta}$ is as follows
\[
\sigma(C_{\varphi})=\overline{\{(\varphi'(\alpha))^n; \ n=0,1,2,\ldots\}}.
\]
\end{theorem}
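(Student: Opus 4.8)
The plan is to transport the whole statement to the rotation model $\psi(z)=\lambda z$, where everything is transparent, and then read it back off. By Lemma~\ref{elliptic}, $\varphi$ is conformally equivalent to $\psi(z)=\lambda z$ with $\lambda=\varphi'(\alpha)$; fix $\tau\in\mathrm{Aut}(\mathbb{D})$ with $\psi=\tau\circ\varphi\circ\tau^{-1}$ (one may take $\tau$ with $\tau(\alpha)=0$). Since $\psi$ is an automorphism fixing $0$ it is a rotation, so $\lambda=\psi'(0)=\varphi'(\alpha)$ satisfies $|\lambda|=1$. By Theorem~\ref{C} (invoked in the proof of Lemma~\ref{similar}), $C_{\tau}$ is bounded and invertible on $H^2_{\beta}$, and
\[
C_{\varphi}=C_{\tau}\,C_{\psi}\,C_{\tau}^{-1}.
\]

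Next I would handle the model operator $C_{\psi}$ directly. Since $C_{\psi}(z^{n})=(\lambda z)^{n}=\lambda^{n}z^{n}$ for every $n\ge0$, the operator $C_{\psi}$ is diagonal in the monomial basis, equivalently in the orthonormal basis $\{z^{n}/\beta_{n}\}_{n\ge0}$ of $H^2_{\beta}$, with diagonal entries $\{\lambda^{n}\}_{n\ge0}\subseteq\partial\mathbb{D}$; in particular $C_{\psi}$ is a diagonal unitary, hence normal. By the elementary spectral theory of diagonal operators, $\sigma(C_{\psi})$ is the closure of the set of diagonal entries: if $\mu\notin\overline{\{\lambda^{n}\}}$ then $\inf_{n}|\lambda^{n}-\mu|>0$ and $(C_{\psi}-\mu)^{-1}$ is the bounded diagonal operator with entries $(\lambda^{n}-\mu)^{-1}$; if $\mu=\lambda^{n_{0}}$ then $z^{n_{0}}\in\ker(C_{\psi}-\mu)$; and if $\mu$ is an accumulation point of $\{\lambda^{n}\}$, picking $\lambda^{n_{k}}\to\mu$ shows $C_{\psi}-\mu$ is not bounded below. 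Hence $\sigma(C_{\psi})=\overline{\{\lambda^{n}:n\ge0\}}$.

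Finally I would push these facts through the similarity. Applying $C_{\varphi}=C_{\tau}C_{\psi}C_{\tau}^{-1}$ to $C_{\tau}(z^{n})=(\tau)^{n}$ gives $C_{\varphi}((\tau)^{n})=\lambda^{n}(\tau)^{n}$, so $C_{\varphi}$ is diagonal in the Riesz basis $\{(\tau)^{n}/\beta_{n}\}_{n\ge0}=\{C_{\tau}(z^{n}/\beta_{n})\}$; equivalently, $C_{\varphi}$ is a diagonal unitary, hence normal, with respect to the equivalent Hilbert-space norm $\|f\|'=\|C_{\tau}^{-1}f\|_{H^2_{\beta}}$ for which this basis is orthonormal. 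Since the spectrum is a similarity invariant, $\sigma(C_{\varphi})=\sigma(C_{\psi})=\overline{\{\lambda^{n}:n\ge0\}}=\overline{\{(\varphi'(\alpha))^{n}:n=0,1,2,\ldots\}}$, as claimed. One can additionally record the dichotomy: $\sigma(C_{\varphi})=\partial\mathbb{D}$ when $\lambda$ is not a root of unity, while $\sigma(C_{\varphi})=\{1,\lambda,\ldots,\lambda^{m-1}\}$ when $\lambda$ is a primitive $m$-th root of unity, in which case also $\varphi^{m}=\mathrm{id}$ and $C_{\varphi}^{m}=I$.

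Essentially no analytic obstacle remains once Lemma~\ref{similar} (equivalently Theorem~\ref{C}) is available: that single input, the bounded invertibility of the intertwiner $C_{\tau}$, is the crux, and it is precisely where the polynomial-growth hypothesis is needed, since on a general weighted Hardy space $C_{\tau}$ need not even be bounded. The one subtlety worth flagging in the write-up is the exact sense of \emph{diagonal}/\emph{normal} for $C_{\varphi}$ itself: $C_{\varphi}$ is diagonal with respect to a Riesz, not an orthonormal, basis of $H^2_{\beta}$, and is honestly normal only after replacing the norm of $H^2_{\beta}$ by the equivalent one above.
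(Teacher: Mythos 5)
Your proof is correct and follows essentially the same route as the paper: conjugate to the rotation model $\psi(z)=\lambda z$ via Lemma~\ref{elliptic} and the bounded invertibility of $C_{\tau}$ (Theorem~\ref{C}), observe that $C_{\psi}$ is diagonal in the orthonormal basis $\{z^{n}/\beta_{n}\}$ with entries $\lambda^{n}$, and transfer the spectrum by similarity. Your added caveat --- that $C_{\varphi}$ itself is diagonal only with respect to a Riesz basis and is normal only after passing to an equivalent norm, since normality is not a similarity invariant --- is a legitimate refinement of a point the paper states without qualification.
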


\begin{proof}
By Lemma \ref{elliptic},
$\varphi$ is conformally equivalent to $\psi(z)=\lambda z$, where $\lambda=\varphi'(\alpha)$. Since the composition operator induced by a disk automorphism is an invertible bounded operator on $H^2_{\beta}$, we have $C_{\varphi}$ is similar to $C_{\psi}$ acting on $H^2_{\beta}$. Notice that the composition operator $C_{\psi}$ is a diagonal operator, more precisely, it has the following matrix presentation under the orthogonal base $\{\frac{z^n}{\beta_n}\}_{k=1}^{\infty}$
$$
C_{\psi}=\begin{bmatrix}
1 & 0 & 0  & \cdots & 0 & \cdots \\
0   & \lambda & 0  & \cdots & 0 & \cdots \\
0   & 0   & \lambda^2  & \cdots  & 0 & \cdots \\
\vdots   & \vdots & \vdots & \ddots &\vdots  &\vdots \\
0   & 0 & 0 & \cdots \lambda^{n} & \cdots \\
\vdots   & \vdots & \vdots&\vdots  &\vdots & \ddots
\end{bmatrix},
$$
Then $C_{\varphi}$ acting on $H^2_{\beta}$ is a diagonal operator and hence it is a normal operator. Moreover,
\[
\sigma(C_{\varphi})=\sigma(C_{\psi})=\overline{\{(\varphi'(\alpha))^n; \ n=0,1,2,\}}.
\]
\end{proof}

\begin{theorem}\label{Sp}
Let $H^2_{\beta}$ be a weighted Hardy space of polynomial growth.  If $\varphi$ is a parabolic disk automorphism, then the spectrum of $C_{\varphi}$ acting on $H^2_{\beta}$ is the unit circle.
\end{theorem}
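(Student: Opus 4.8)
The plan is to reduce, via the classification of parabolic automorphisms and Lemma~\ref{similar}, to the two model maps $\psi_1,\psi_2$ of Lemma~\ref{parabolic}, and then to establish the two inclusions $\sigma(C_{\psi_1})\subseteq\partial\mathbb{D}$ and $\partial\mathbb{D}\subseteq\sigma(C_{\psi_1})$ separately.

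First I would reduce. Given a parabolic $\varphi$, Lemma~\ref{parabolic} makes it conformally equivalent to $\psi_1$ or $\psi_2$, so by Lemma~\ref{similar} the operator $C_\varphi$ on $H^2_{\beta}$ is similar to $C_{\psi_1}$ or $C_{\psi_2}$ and has the same spectrum. By Remark~\ref{reparabolic}, $\psi_2=\psi_1^{-1}$, hence $C_{\psi_2}=C_{\psi_1}^{-1}$ and $\sigma(C_{\psi_2})=\{\lambda^{-1}:\lambda\in\sigma(C_{\psi_1})\}$; since $\partial\mathbb{D}$ is invariant under $\lambda\mapsto\lambda^{-1}$, it suffices to prove $\sigma(C_{\psi_1})=\partial\mathbb{D}$.

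For the inclusion $\sigma(C_{\psi_1})\subseteq\partial\mathbb{D}$ I would control the spectral radius. Since $C_{\psi_1}^n=C_{\psi_1^n}$ and, by Remark~\ref{reparabolic}, $\psi_1^n(z)=\frac{(n+\mathbf{i})z-n}{nz+\mathbf{i}-n}$ is again a disk automorphism with $|\psi_1^n(0)|=n/\sqrt{n^2+1}$, so that $1-|\psi_1^n(0)|\geq\frac{1}{2(n^2+1)}$, Theorem~\ref{NormC} (applied with $H^2_{\beta}$ of $N$-polynomial growth) furnishes a constant $K$ independent of $n$ with
\[
\|C_{\psi_1}^n\|_{H^2_{\beta}}=\|C_{\psi_1^n}\|_{H^2_{\beta}}\leq\frac{K}{(1-|\psi_1^n(0)|)^{2N^2+3N+2}}\leq K\bigl(2(n^2+1)\bigr)^{2N^2+3N+2}.
\]
This is polynomial in $n$, so $r(C_{\psi_1})=\lim_n\|C_{\psi_1}^n\|_{H^2_{\beta}}^{1/n}\leq 1$; applying the same estimate to $\psi_1^{-n}=\psi_2^n$ (which also satisfies $|\psi_2^n(0)|=n/\sqrt{n^2+1}$) gives $r(C_{\psi_1}^{-1})\leq 1$. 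Hence every $\lambda\in\sigma(C_{\psi_1})$ has $|\lambda|\leq 1$ and $|\lambda^{-1}|\leq 1$, i.e.\ $\sigma(C_{\psi_1})\subseteq\partial\mathbb{D}$.

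For the reverse inclusion I would exhibit explicit eigenvectors. Let $h(z)=\frac{1+z}{1-z}$. The Cayley transform conjugates $\psi_1$ to a nontrivial translation $w\mapsto w+c$ of the upper half-plane fixing $\infty$ (with $c\in\mathbb{R}\setminus\{0\}$, in fact $c=-2$), which yields the identity $h(\psi_1(z))=h(z)-\mathbf{i}c$. Hence for each $\mu>0$ the function
\[
g_\mu(z)=\exp\!\left(-\mu\,\frac{1+z}{1-z}\right)
\]
satisfies $g_\mu\circ\psi_1=\mathrm{e}^{\mathbf{i}\mu c}g_\mu$, that is $C_{\psi_1}g_\mu=\mathrm{e}^{\mathbf{i}\mu c}g_\mu$. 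Since $\mathrm{Re}\,h(z)=\frac{1-|z|^2}{|1-z|^2}>0$ on $\mathbb{D}$, each $g_\mu$ is a bounded nonvanishing analytic function, and the essential point is that $g_\mu\in H^2_{\beta}$: writing $|g_\mu'(z)|=\frac{2\mu}{|1-z|^2}\exp\!\bigl(-\mu\frac{1-|z|^2}{|1-z|^2}\bigr)$ one checks that $\int_{\mathbb{D}}|g_\mu'(z)|^2(1-|z|^2)^{\lambda-1}\,\mathrm{d}A(z)<\infty$ for every $\lambda$, the exponential decay near $z=1$ dominating the polynomial blow-up, so $g_\mu$ lies in every weighted Dirichlet space $D_{N}^2$; since a weighted Hardy space of $N$-polynomial growth has $\beta_k\leq C_{N+k}^{k}$ by (the proof of) Lemma~\ref{Mnorm}, the inclusion $D_{N}^2\hookrightarrow H^2_{\beta}$ is bounded, whence $g_\mu\in H^2_{\beta}$. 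Therefore $\mathrm{e}^{\mathbf{i}\mu c}$ is an eigenvalue of $C_{\psi_1}$ for every $\mu>0$, and as $\mu$ ranges over $(0,\infty)$ the values $\mathrm{e}^{\mathbf{i}\mu c}$ fill $\partial\mathbb{D}$; since the spectrum is closed, $\partial\mathbb{D}\subseteq\sigma(C_{\psi_1})$. Combining with the previous step gives $\sigma(C_{\psi_1})=\partial\mathbb{D}$, hence $\sigma(C_\varphi)=\partial\mathbb{D}$ for every parabolic $\varphi$.

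I expect the main obstacle to be the membership $g_\mu\in H^2_{\beta}$: this is exactly where the polynomial-growth hypothesis is genuinely used (a general, smaller weighted Hardy space need not contain $H^\infty$), and it requires either the integral estimate above or a direct asymptotic analysis of the Taylor coefficients of $g_\mu$ (which are Laguerre-type polynomials). The two reductions and the spectral-radius bound are routine once Theorem~\ref{NormC} and the classification lemmas are in hand.
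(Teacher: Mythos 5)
Your reduction to $\psi_1$ and the spectral-radius bound giving $\sigma(C_{\psi_1})\subseteq\partial\mathbb{D}$ are correct and coincide with the first step of the paper's proof. The gap is in the reverse inclusion. The functions $g_\mu(z)=\exp\bigl(-\mu\tfrac{1+z}{1-z}\bigr)$ do satisfy $C_{\psi_1}g_\mu=\mathrm{e}^{-2\mathbf{i}\mu}g_\mu$ formally, but they are exactly the atomic singular inner functions with mass at $z=1$, and they do \emph{not} belong to a general weighted Hardy space of polynomial growth. Your integral estimate is where this breaks: the factor $\exp\bigl(-2\mu\tfrac{1-|z|^2}{|1-z|^2}\bigr)$ is small only when $z\to1$ non-tangentially. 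Writing $1-z=\delta\mathrm{e}^{\mathbf{i}\alpha}$ one gets $\tfrac{1-|z|^2}{|1-z|^2}=\tfrac{2\cos\alpha}{\delta}-1$, so in the tangential range $\alpha\to\pm\pi/2$ the exponential is $O(1)$ while $|g_\mu'|^2\asymp\delta^{-4}$; carrying out the integration shows $\int_{\mathbb{D}}|g_\mu'|^2(1-|z|^2)^{\lambda-1}\,\mathrm{d}A$ diverges for all $\lambda\le 3/2$. Equivalently, the Taylor coefficients of $g_\mu$ decay only like $n^{-3/4}$ (they are differences of Laguerre values), so $\sum_n|\widehat{g_\mu}(n)|^2(n+1)^{2N}=\infty$ for every integer $N\ge1$ and $g_\mu\notin D^2_{N,2}$. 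In particular $g_\mu$ is not in the classical Dirichlet space, which \emph{is} a weighted Hardy space of polynomial growth (indeed the only inner functions in the Dirichlet space are finite Blaschke products), so your argument proves nothing there. This is not a repairable detail: for the small spaces in this class $C_{\psi_1}$ has no unimodular eigenvectors of this form, so exact eigenvectors cannot yield $\partial\mathbb{D}\subseteq\sigma(C_{\psi_1})$.

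The paper completes the proof by an entirely different mechanism. Since $\psi_1^n$ is conformally equivalent to $\psi_1$ (Remark \ref{reparabolic}), Lemma \ref{similar} gives $\sigma(C_{\psi_1})=\sigma(C_{\psi_1}^n)$ for every $n$, and a short argument about closed subsets of $\partial\mathbb{D}$ stable under all power maps shows the spectrum is either all of $\partial\mathbb{D}$ or $\{1\}$. The case $\{1\}$ is then excluded quantitatively: evaluating at the origin gives
\[
\bigl|(C_{\psi_1}-\mathbf{I})^{4m}\psi_1^{-2m}(0)\bigr|\ \ge\ \frac{2}{4m^2+1},
\]
while $\sigma(C_{\psi_1}^{-1})=\{1\}$ would force $\|\psi_1^{-2m}\|_{H^2_\beta}\le(1+\epsilon)^{2m}$ for large $m$, so $\rho(C_{\psi_1}-\mathbf{I})\ge(1+\epsilon)^{-1/2}>0$, a contradiction. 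If you want to salvage an eigenvector-flavoured route you would need approximate eigenvectors adapted to the weight rather than the singular inner functions themselves.
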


\begin{proof}
Suppose that $H^2_{\beta}$ is a weighted Hardy space of $N$-polynomial growth.
By Lemma \ref{parabolic}, Remark \ref{reparabolic} and Lemma \ref{similar}, it suffices to consider the spectra of
\[
\psi(z)=\frac{(1+\mathbf{i})z-1}{z+\mathbf{i}-1} \ \ \ \text{and} \ \ \ \psi^{-1}(z)=\frac{(1-\mathbf{i})z-1}{z-\mathbf{i}-1}.
\]
We will prove the spectrum of $C_{\psi}$ acting on $H^2_{\beta}$ is the unit circle divided into three steps.

$\bf (1)$  The spectrum of $C_{\psi}$ acting on $H^2_{\beta}$ is contained in the unit circle.

By  Theorem \ref{NormC}, we have for any positive integer $n$, there exists a positive number $K$ such that
\[
\|C^n_{\psi}\|_{H^2_{\beta}}=\|C_{\psi^n}\|_{H^2_{\beta}}\leq \frac{K}{(1-|\frac{n}{n+\mathbf{i}}|)^{2N^2+3N+2}},
\]
and
\[
\|C^{-n}_{\psi}\|_{H^2_{\beta}}=\|C_{\psi^{-n}}\|_{H^2_{\beta}}\leq \frac{K}{(1-|\frac{n}{n-\mathbf{i}}|)^{2N^2+3N+2}}.
\]
Consequently
\[
\rho(C_{\psi})=\lim\limits_{n\rightarrow+\infty}\|C^n_{\psi}\|^{\frac{1}{n}}_{H^2_{\beta}}\leq 1,
\]
and
\[
\rho(C^{-1}_{\psi})=\lim\limits_{n\rightarrow+\infty}\|C^{-n}_{\psi}\|^{\frac{1}{n}}_{H^2_{\beta}}\leq 1.
\]
Therefore,
\[
\sigma(C_{\psi})\subseteq \partial\mathbb{D}.
\]

$\bf (2)$  The spectrum of $C_{\psi}$ acting on $H^2_{\beta}$ is either the whole unit circle or the single point set $\{1\}$.

By Remark \ref{reparabolic} and Lemma \ref{similar}, for any positive integer $n$, $C_{\psi}$ is similar to $C^n_{\psi}$ on $H^2_{\beta}$ and consequently,
\[
\sigma(C_{\psi})=\sigma(C^n_{\psi}),
\]
that means if $\lambda\in\sigma(C_{\psi})$, then $\lambda^n\in\sigma(C_{\psi})$.

Now, suppose that the spectrum of $C_{\psi}$ on $H^2_{\beta}$ is not the whole unit circle. Denote $\lambda=\mathrm{e}^{t2\pi\mathbf{i}}$, $t\in\mathbb{R}$. Then, if $t$ is a irrational number, $\lambda$ is not in $\sigma(C_{\psi})$. Because if $\lambda$ is in $\sigma(C_{\psi})$, then
\[
\sigma(C_{\psi})\supseteq\overline{\{\lambda^n; \ n\in\mathbb{N}\}}=\partial\mathbb{D}.
\]

If $\lambda\in\sigma(C_{\psi})$ and $t=\frac{q}{p}$ is a rational number, where $p$ and $q$ are two coprime positive integers, then
\[
\{1,\mathrm{e}^{\frac{1}{p}\cdot2\pi\mathbf{i}},\mathrm{e}^{\frac{2}{p}\cdot2\pi\mathbf{i}},\ldots,\mathrm{e}^{\frac{p-1}{p}\cdot2\pi\mathbf{i}}\}\subseteq\sigma(C_{\psi}).
\]
Since $\sigma(C_{\psi})$ is a closed proper subset of $\partial\mathbb{D}$ which contains no irrational points, $\sigma(C_{\psi})$ is a finite set and consequently we could write $\sigma(C_{\psi})$ as follows
\[
\sigma(C_{\psi})=\{1, \mathrm{e}^{\frac{1}{p_1}\cdot2\pi\mathbf{i}},\ldots,\mathrm{e}^{\frac{p_1-1}{p_1}\cdot2\pi\mathbf{i}},\ldots,\mathrm{e}^{\frac{1}{p_M}\cdot2\pi\mathbf{i}},\ldots,
\mathrm{e}^{\frac{p_M-1}{p_M}\cdot2\pi\mathbf{i}}\},
\]
where $M$ is positive integer. Furthermore,
\[
\sigma(C_{\psi})=\sigma(C^{p_1p_2\cdots p_M}_{\psi})=\{\lambda^{p_1p_2\cdots p_M}; \lambda\in\sigma(C_{\psi})\}=\{1\}.
\]
Therefore, if $\sigma(C_{\psi})\subsetneqq\partial\mathbb{D}$, we have $\sigma(C_{\psi})=\{1\}$.

$\bf (3)$  The spectrum of $C_{\psi}$ acting on $H^2_{\beta}$ is just the unit circle.

Suppose that $\sigma(C_{\psi})=\{1\}$. We also have $\sigma(C_{\psi^{-1}})=\sigma(C^{-1}_{\psi})=\{1\}$, and then for any $\epsilon>0$, when $m\in\mathbb{N}$ is large enough,
\[
\|C_{\psi^{-m}}\|_{H^2_{\beta}}=\|C^{-m}_{\psi}\|_{H^2_{\beta}}\le(1+\epsilon)^m.
\]
In particular,
\[
\|\psi^{-2m}(z)\|_{H^2_{\beta}}=\|C_{\psi^{-2m}}(z)\|_{H^2_{\beta}}\le(1+\epsilon)^{2m}.
\]

Consider the operator $(C_{\psi}-\mathbf{I})^{4m}$, where $\mathbf{I}$ is the identity operator. One can see
\[
(C_{\psi}-\mathbf{I})^{4m}(\psi^{-2m})=\sum\limits_{n=0}^{4m}(-1)^nC_{4m}^{n}\cdot C_{\psi^{n}}(\psi^{-2m})=\sum\limits_{n=0}^{4m}(-1)^nC_{4m}^{n}\cdot \psi^{n-2m}.
\]
Notice that for any $k\in\mathbb{N}$,
\[
\psi^{k}(0)=\frac{k}{k-\mathbf{i}}=\frac{k^2}{k^2+1}+\mathbf{i}\frac{k}{k^2+1} \ \ \text{and} \ \ \psi^{-k}(0)=\frac{k}{k+\mathbf{i}}=\frac{k^2}{k^2+1}-\mathbf{i}\frac{k}{k^2+1}.
\]
Then,
\begin{align*}
\mathrm{Re}((C_{\psi}-\mathbf{I})^{4m}\psi^{-2m}(0))=&\mathrm{Re}(\sum\limits_{n=0}^{4m}(-1)^nC_{4m}^{n}\cdot \psi^{n-2m}(0)) \\
=&\sum\limits_{n=0}^{4m}(-1)^nC_{4m}^{n}\cdot\frac{(n-2m)^2}{(n-2m)^2+1}.
\end{align*}
Furthermore, we have
\begin{align*}
&-\mathrm{Re}((C_{\psi}-\mathbf{I})^{4m}\psi^{-2m}(0))=(1-1)^{4m}-\mathrm{Re}((C_{\psi}-\mathbf{I})^{4m}\psi^{-2m}(0)) \\
=&\sum\limits_{n=0}^{4m}(-1)^nC_{4m}^{n}-\sum\limits_{n=0}^{4m}(-1)^nC_{4m}^{n}\cdot\frac{(n-2m)^2}{(n-2m)^2+1} \\
=&\sum\limits_{n=0}^{4m}(-1)^nC_{4m}^{n}\cdot\frac{1}{(n-2m)^2+1}.
\end{align*}
One can see that
\[
\sum\limits_{n=2m-1}^{2m+1}(-1)^nC_{4m}^{n}\cdot\frac{1}{(n-2m)^2+1}=-\frac{1}{2}C_{4m}^{2m-1}+C_{4m}^{2m}-\frac{1}{2}C_{4m}^{2m+1}>0,
\]
and for $k=1,2,\ldots,m-1$,
\[
C_{4m}^{2k}\cdot\frac{1}{(2k-2m)^2+1}-C_{4m}^{2k-1}\cdot\frac{1}{(2k-1-2m)^2+1}>0,
\]
and hence
\begin{align*}
&\sum\limits_{n=1}^{2m-2}\left((-1)^nC_{4m}^{n}\cdot\frac{1}{(n-2m)^2+1}\right) \\
=&\sum\limits_{k=1}^{m-1}\left(-C_{4m}^{2k-1}\cdot\frac{1}{(2k-1-2m)^2+1}+C_{4m}^{2k}\cdot\frac{1}{(2k-2m)^2+1}\right) \\
>& 0.
\end{align*}
In addition, for $n=0,1,\ldots,4m$,
\[
(-1)^nC_{4m}^{n}\cdot\frac{1}{(n-2m)^2+1}=(-1)^{4m-n}C_{4m}^{4m-n}\cdot\frac{1}{((4m-n)-2m)^2+1},
\]
and hence
\[
\sum\limits_{n=2m+1}^{4m-1}\left((-1)^nC_{4m}^{n}\cdot\frac{1}{(n-2m)^2+1}\right)=\sum\limits_{n=1}^{2m-2}\left((-1)^nC_{4m}^{n}\cdot\frac{1}{(n-2m)^2+1}\right)>0.
\]
Then,
\begin{align*}
&-\mathrm{Re}((C_{\psi}-\mathbf{I})^{4m}\psi^{-2m}(0)) \\
=&\frac{1}{(2m)^2+1}+\left(\sum\limits_{n=1}^{2m-2}+\sum\limits_{n=2m-1}^{2m+1}+\sum\limits_{n=2m+1}^{4m-1}\right)
\left((-1)^nC_{4m}^{n}\cdot\frac{1}{(n-2m)^2+1}\right) \\
&+\frac{1}{(2m)^2+1} \\
\ge&\frac{2}{4m^2+1}.
\end{align*}
Consequently,
\begin{align*}
&\left\|(C_{\psi}-\mathbf{I})^{4m}\frac{\psi^{-2m}}{\|\psi^{-2m}\|_{H^2_{\beta}}}\right\|_{H^2_{\beta}} \\
\ge&\frac{1}{\|\psi^{-2m}\|_{H^2_{\beta}}}\cdot |(C_{\psi}-\mathbf{I})^{4m}\psi^{-2m}(0)| \\
\ge&\frac{1}{\|\psi^{-2m}\|_{H^2_{\beta}}}\cdot |-\mathrm{Re}((C_{\psi}-\mathbf{I})^{4m}\psi^{-2m}(0))| \\
\ge& \frac{1}{(1+\epsilon)^{2m}}\cdot \frac{2}{4m^2+1}.
\end{align*}
Therefore,
\begin{align*}
\rho(C_{\psi}-\mathbf{I})=&\lim\limits_{m\rightarrow\infty}\|(C_{\psi}-\mathbf{I})^{4m}\|_{H^2_{\beta}}^{\frac{1}{4m}} \\
\ge&\lim\limits_{m\rightarrow\infty}\left\|(C_{\psi}-\mathbf{I})^{4m}\frac{\psi^{-2m}}{\|\psi^{-2m}\|_{H^2_{\beta}}}\right\|_{H^2_{\beta}}^{\frac{1}{4m}} \\
\ge&\lim\limits_{m\rightarrow\infty}\left(\frac{1}{(1+\epsilon)^{2m}}\cdot \frac{2}{4m^2+1}\right)^{\frac{1}{4m}} \\
=&\frac{1}{\sqrt{1+\epsilon}} \\
>&0,
\end{align*}
which is a contradiction to $\sigma(C_{\psi}-\mathbf{I})=\{0\}$.

Now we have obtained that the spectrum of $C_{\psi}$ acting on $H^2_{\beta}$ is the unit circle. Furthermore,
\[
\sigma(C_{\psi^{-1}})=\sigma(C^{-1}_{\psi})=\{\frac{1}{z}; \ z\in\sigma(C_{\psi})\}=\partial\mathbb{D}.
\]
Thus, for any parabolic disk automorphism $\varphi$, the spectrum of $C_{\varphi}$ acting on $H^2_{\beta}$ is the unit circle.

\end{proof}

\begin{theorem}\label{Sh}
Let $H^2_{\beta}$ be a weighted Hardy space of polynomial growth induced by the weight sequence $\beta=\{\beta_k\}_{k=1}^{\infty}$.  If $\varphi$ is a hyperbolic disk automorphism, then the spectrum of $C_{\varphi}$ acting on $H^2_{\beta}$ is contained in an annulus. More precisely,
\[
\sigma(C_{\varphi})\subseteq \{z\in \mathbb{C}; \left(\frac{1-|\varphi(0)|}{1+|\varphi(0)|}\right)^{2N^2+3N+2} \leq|z|\leq \left(\frac{1+|\varphi(0)|}{1-|\varphi(0)|}\right)^{2N^2+3N+2} \}.
\]
\end{theorem}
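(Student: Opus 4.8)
The plan is to mirror the strategy used in the parabolic case (Theorem \ref{Sp}): first normalize $\varphi$ by conformal equivalence, then control the norms of all iterates of the normalized map via Theorem \ref{NormC}, and finally convert these norm bounds into two-sided control on the spectral radii of $C_\varphi$ and $C_\varphi^{-1}$. By Lemma \ref{hyperboic}, $\varphi$ is conformally equivalent to $\psi(z)=\frac{z+r}{1+rz}$ with $r=|\varphi(0)|$, and by Lemma \ref{similar} the operators $C_\varphi$ and $C_\psi$ are similar on $H^2_\beta$, so $\sigma(C_\varphi)=\sigma(C_\psi)$. Thus it suffices to locate $\sigma(C_\psi)$.

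Next I would compute the iterates of $\psi$. Representing $\psi$ by the matrix $\left(\begin{smallmatrix}1 & r\\ r & 1\end{smallmatrix}\right)$ and diagonalizing it (eigenvalues $1\pm r$, eigenvectors $(1,\pm1)$), one obtains for every $n\ge 1$
\[
\psi^n(z)=\frac{z+r_n}{1+r_n z},\qquad r_n=\frac{(1+r)^n-(1-r)^n}{(1+r)^n+(1-r)^n},
\]
and similarly $\psi^{-n}(z)=\frac{z-r_n}{1-r_n z}$; in particular $|\psi^{\pm n}(0)|=r_n$ and, since $(1-r)^n\le (1+r)^n$,
\[
\frac{1}{1-r_n}=\frac{(1+r)^n+(1-r)^n}{2(1-r)^n}\le\left(\frac{1+r}{1-r}\right)^{\!n}.
\]
Each $\psi^{\pm n}$ is a disk automorphism of the form $e^{\mathbf{i}\theta}\varphi_{z_0}$ with $|z_0|=|\psi^{\pm n}(0)|=r_n$, so Theorem \ref{NormC} gives, with the fixed exponent $2N^2+3N+2$ and the fixed constant $K$ determined by the $N$-polynomial growth of $H^2_\beta$,
\[
\|C_\psi^{\pm n}\|_{H^2_\beta}=\|C_{\psi^{\pm n}}\|_{H^2_\beta}\le\frac{K}{(1-r_n)^{2N^2+3N+2}}\le K\left(\frac{1+r}{1-r}\right)^{\!n(2N^2+3N+2)}.
\]

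Taking $n$-th roots and letting $n\to\infty$ in the spectral-radius formula then yields
\[
\rho(C_\psi)\le\left(\frac{1+r}{1-r}\right)^{\!2N^2+3N+2}\quad\text{and}\quad \rho(C_\psi^{-1})\le\left(\frac{1+r}{1-r}\right)^{\!2N^2+3N+2}.
\]
Because $C_\psi$ is invertible, $\sigma(C_\psi^{-1})=\{z^{-1}:z\in\sigma(C_\psi)\}$, so every $z\in\sigma(C_\psi)$ satisfies $|z|^{-1}\le\rho(C_\psi^{-1})$, i.e. $|z|\ge\left(\frac{1-r}{1+r}\right)^{2N^2+3N+2}$; combining this with $|z|\le\rho(C_\psi)$ and substituting $r=|\varphi(0)|$ and $\sigma(C_\varphi)=\sigma(C_\psi)$ gives exactly the claimed annulus.

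The only genuine point requiring care is that the constant $K$ and the exponent $2N^2+3N+2$ produced by Theorem \ref{NormC} depend only on the growth class $N$ of $H^2_\beta$, not on the particular automorphism (this is clear from the proof of Theorem \ref{NormC}), so they stay fixed along the whole sequence $\{\psi^{\pm n}\}_{n\ge 1}$ and survive the passage to $n$-th roots; everything else is the elementary iterate computation above. I do not expect a serious obstacle here — unlike the elliptic and parabolic cases, the argument only confines $\sigma(C_\varphi)$ to an annulus rather than pinning it down exactly, which is all that is asserted.
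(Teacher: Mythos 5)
Your proposal is correct and follows essentially the same route as the paper: normalize to $\psi(z)=\frac{z+r}{1+rz}$ via Lemma \ref{hyperboic} and Lemma \ref{similar}, compute the iterates $\psi^{\pm n}$ explicitly, apply Theorem \ref{NormC} to each iterate, and pass to the spectral radius formula for both $C_\psi$ and $C_\psi^{-1}$. The only differences are cosmetic — you spell out the bound $\frac{1}{1-r_n}\le\left(\frac{1+r}{1-r}\right)^n$ and the step $\sigma(C_\psi^{-1})=\{z^{-1}:z\in\sigma(C_\psi)\}$, which the paper leaves implicit (and your formula for $\psi^n$ corrects an evident typo in the paper's display).
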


\begin{proof}
Suppose that $H^2_{\beta}$ is a weighted Hardy space of $N$-polynomial growth.
By Lemma \ref{hyperboic}, it suffices to consider the spectrum of
\[
\psi(z)=\frac{z+r}{1+rz},
\]
where $r=|\varphi(0)|$.
After a straight computation, one can see
\[
\psi^n(z)=\underset{n}{\underbrace{\psi\circ\cdots\circ\psi}}(z)=\frac{1+\frac{(1+r)^n-(1-r)^n}{(1+r)^n+(1-r)^n}}{1+\frac{(1+r)^n-(1-r)^n}{(1+r)^n+(1-r)^n}z},
\]
Then, by  Theorem \ref{NormC}, there exists a positive number $K$ such that
\[
\|C^n_\psi\|_{H^2_{\beta}}=\|C_{\psi^n}\|_{H^2_{\beta}}\leq \frac{K}{\left(1-\frac{(1+r)^n-(1-r)^n}{(1+r)^n+(1-r)^n}\right)^{2N^2+3N+2}},
\]
and consequently
\[
\rho(C_\psi)=\lim\limits_{n\rightarrow+\infty}\|C^n_\psi\|^{\frac{1}{n}}_{H^2_{\beta}}\leq \left(\frac{1+r}{1-r}\right)^{2N^2+3N+2}.
\]
Similarly, we also obtain
\[
\rho(C^{-1}_\psi)\leq \left(\frac{1+r}{1-r}\right)^{2N^2+3N+2}.
\]
Therefore,
\[
\sigma(C_{\varphi})\subseteq \left\{z\in \mathbb{C}; \left(\frac{1-|\varphi(0)|}{1+|\varphi(0)|}\right)^{2N^2+3N+2} \leq|z|\leq \left(\frac{1+|\varphi(0)|}{1-|\varphi(0)|}\right)^{2N^2+3N+2} \right\}.
\]
\end{proof}

\section{Composition operators with closed ranges and Fredholmness}

In this section, we completely characterize the closed range and Fredholm composition operators with symbols of analytic self-maps on the closed unit disk acting on weighted Hardy spaces of polynomial growth.

\begin{proposition}\label{cr}
Let $H^2_{\beta}$ be a weighted Hardy space of polynomial growth. Let $\varphi:\overline{\mathbb{D}}\rightarrow\overline{\mathbb{D}}$ be a nontrivial analytic map. If the composition operator $C_{\varphi}$ acting on $H^2_{\beta}$ has closed range, then $\mathbb{T}\subseteq\varphi(\overline{\mathbb{D}})$.
\end{proposition}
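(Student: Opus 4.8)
The plan is to argue by contraposition. Recall (as noted in the introduction) that, since $\varphi$ is a nonconstant analytic self-map, $C_\varphi$ is injective on $H^2_\beta$; an injective bounded operator has closed range if and only if it is bounded below. So it suffices to fix $\zeta\in\mathbb{T}$ with $\zeta\notin\varphi(\overline{\mathbb{D}})$ and produce a sequence $(h_n)$ in $H^2_\beta$ with $\|C_\varphi h_n\|_{H^2_\beta}/\|h_n\|_{H^2_\beta}\to 0$. The natural choice is the family of ``bump'' polynomials
\[
h_n(z)=\left(\frac{1+\overline{\zeta}z}{2}\right)^{n},\qquad n=1,2,\ldots,
\]
which lie in $H^2_\beta$, are close to $1$ in modulus near $\zeta$ (indeed $h_n(r\zeta)=\bigl(\tfrac{1+r}{2}\bigr)^{n}$ for $0<r<1$), and are uniformly small on any compact set avoiding $\zeta$. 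The geometric input is that $\varphi(\overline{\mathbb{D}})$ is a compact subset of $\overline{\mathbb{D}}$ missing $\zeta$, so $\rho_0:=\sup_{w\in\varphi(\overline{\mathbb{D}})}\bigl|\tfrac{1+\overline{\zeta}w}{2}\bigr|<1$ and hence $|C_\varphi h_n|\le\rho_0^{\,n}$ on $\overline{\mathbb{D}}$.

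First I would upgrade this sup-norm decay to decay in $H^2_\beta$. Since $\varphi\in{\rm Hol}(\overline{\mathbb{D}})$, it extends analytically to a disk $\{|z|<1+\varepsilon_0\}$; by continuity of $z\mapsto|1+\overline{\zeta}\varphi(z)|$ and compactness one may choose $\varepsilon\in(0,\varepsilon_0)$ so small that $\rho:=\sup_{|z|\le 1+\varepsilon}\bigl|\tfrac{1+\overline{\zeta}\varphi(z)}{2}\bigr|<1$. Then $h_n\circ\varphi$ (a polynomial composed with $\varphi$) is analytic on $\{|z|<1+\varepsilon_0\}$ with $|h_n\circ\varphi|\le\rho^{\,n}$ on $\{|z|\le 1+\varepsilon\}$, so Cauchy's estimates on the circle $|z|=1+\varepsilon$ give $|\widehat{h_n\circ\varphi}(k)|\le\rho^{\,n}(1+\varepsilon)^{-k}$ for every $k$. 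Because $H^2_\beta$ has polynomial growth, $\beta_k$ is polynomially bounded, hence $A:=\sum_{k\ge 0}\beta_k^{2}(1+\varepsilon)^{-2k}<\infty$, and therefore
\[
\|C_\varphi h_n\|_{H^2_\beta}=\|h_n\circ\varphi\|_{H^2_\beta}\le\sqrt{A}\,\rho^{\,n}.
\]

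Next I would bound $\|h_n\|_{H^2_\beta}$ from below. Polynomial growth also forces $\beta_k$ to be bounded below by a negative power of $k$, so for each fixed $r\in(0,1)$ one has $\sum_{k\ge 0}r^{2k}\beta_k^{-2}<\infty$, that is, the point-evaluation functional $f\mapsto f(r\zeta)$ is bounded on $H^2_\beta$, say with norm $K_r$. Hence $\|h_n\|_{H^2_\beta}\ge K_r^{-1}|h_n(r\zeta)|=K_r^{-1}\bigl(\tfrac{1+r}{2}\bigr)^{n}$. Since $\rho<1$ we may now pick $r\in(0,1)$ with $\tfrac{1+r}{2}>\rho$, and then
\[
\frac{\|C_\varphi h_n\|_{H^2_\beta}}{\|h_n\|_{H^2_\beta}}\le \sqrt{A}\,K_r\left(\frac{\rho}{(1+r)/2}\right)^{n}\longrightarrow 0\quad(n\to\infty),
\]
so $C_\varphi$ is not bounded below, contradicting the hypothesis that it has closed range. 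Thus $\zeta\in\varphi(\overline{\mathbb{D}})$, and as $\zeta\in\mathbb{T}$ was arbitrary, $\mathbb{T}\subseteq\varphi(\overline{\mathbb{D}})$.

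The step I expect to require the most care is the matching of the two exponential rates: the decay rate $\rho<1$ of $\|C_\varphi h_n\|_{H^2_\beta}$ is fixed by $\varphi$ and $\zeta$ and cannot be made arbitrarily small, so the argument hinges on the observation that the competing lower bound $\bigl(\tfrac{1+r}{2}\bigr)^{n}$ can be pushed arbitrarily close to $1^{n}$ by letting $r\to 1$. The polynomial-growth hypothesis enters precisely here: it is what keeps both geometric series $\sum\beta_k^{2}(1+\varepsilon)^{-2k}$ and $\sum r^{2k}\beta_k^{-2}$ convergent, so that neither the $H^2_\beta$-control of $h_n\circ\varphi$ nor the boundedness of evaluation at $r\zeta$ is lost.
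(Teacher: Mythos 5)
Your proof is correct, and it follows the same overall strategy as the paper (contraposition; closed range plus injectivity gives a lower bound $\|C_\varphi h\|\ge c\|h\|$; exhibit test functions violating it, with the decay of $\|C_\varphi h_n\|$ obtained from Cauchy estimates on a circle $|z|=1+\varepsilon$ where $\varphi$ is still analytic and $\bigl|\tfrac{1+\overline{\zeta}\varphi}{2}\bigr|$ is still $<1$). Where you differ is in the choice of test functions and the lower-bound mechanism. The paper does not use the raw peak functions $\bigl(\tfrac{1+\overline{\zeta}z}{2}\bigr)^n$; it first builds $f=g\circ\tau$, where $\tau$ is a hyperbolic disk automorphism squeezing the disk toward the missing point $\xi$ and $g(z)=z+\xi$, so that $f^k\circ\varphi$ maps $\mathbb{D}$ into $B(0,\epsilon^k)$ while $\|f^k\|_{H^2_\beta}\ge a\|(g)^k\|_{H^2_\beta}\ge a|g(0)|^k=a>0$; the constant lower bound comes from the zeroth Taylor coefficient together with the invertibility of $C_\tau$ (Theorem \ref{Mz}), so no comparison of exponential rates is needed. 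You instead lower-bound $\|h_n\|_{H^2_\beta}$ by the point evaluation at $r\zeta$ (bounded because $\sum r^{2k}\beta_k^{-2}<\infty$) and then must choose $r$ with $\tfrac{1+r}{2}>\rho$ to win the race of geometric rates; you identify and handle this correctly. Your route is more elementary and self-contained --- it avoids invoking the boundedness and invertibility of $C_\tau$ for an auxiliary automorphism --- at the cost of the explicit rate matching, while the paper's conjugation trick trades that bookkeeping for reliance on its earlier structural theorem.
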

\begin{proof}
Suppose that $H^2_{\beta}$ is a weighted Hardy space of $N$-polynomial growth.
If the composition operator $C_{\varphi}$ acting on $H^2_{\beta}$ has closed range, then there exists a positive number $c$ such that
\begin{equation}\label{rest}
\|C_{\varphi}(h)\|_{H^2_{\beta}} \ge c\|h\|_{H^2_{\beta}}, \ \ \ \ \ \text{for any} \ h\in H^2_{\beta}.
\end{equation}
Suppose that $\varphi(\overline{\mathbb{D}})$ dose not contain the unit circle $\mathbb{T}$. Then there exists a point $\xi\in\mathbb{T}$ and a positive number $\epsilon<\frac{1}{2}$ such that
\[
B(\xi, \epsilon)\bigcap \varphi(\overline{\mathbb{D}})=\emptyset,
\]
where $B(\xi, \epsilon)=\{z\in\mathbb{C}; \ |z-\xi|<\epsilon\}$.
Furthermore, we could select a positive number $t\in(0,1)$ such that the arc $J$ which lies in the unit disk, passes through the point $t\xi$ and is orthogonal to the unit circle, is contained in $B(\xi, \epsilon)$.

Let $\eta(z)=\frac{1+z}{1-z}$ be the map from the unit disk $\mathbb{D}$ to the right half-plane $\mathbb{H}_r$, and let $\varsigma(z)=\frac{(1-t)^2}{(1+t)^2}\cdot z$ be the map from $\mathbb{H}_r$ on to itself. Define $\tau : \mathbb{D}\rightarrow\mathbb{D}$ by
\[
\tau=\eta^{-1}\circ\varsigma\circ\eta.
\]
Then $\eta$ is a disk automorphism with fixed point $-\xi$ and maps $J$ to $-J$, where $-J=\{-z;z\in J\}$. Moreover, $\eta$ maps the region  bounded by $J$ and the arc in the unit circle passing through $-\xi$ to the region  bounded by $-J$ and the arc in the unit circle passing through $-\xi$ (see Figure \ref{tau}).
\begin{figure}[htbp]
	\centering
	\includegraphics[width=1\textwidth]{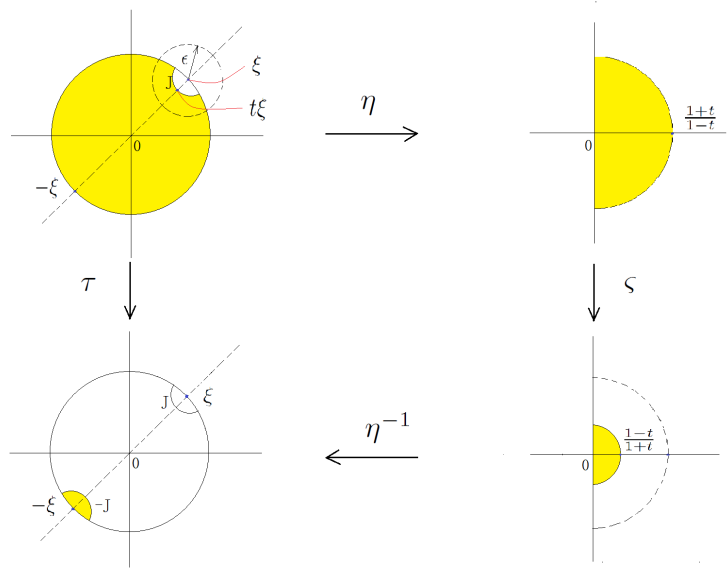}
	\caption{Disk automorphism $\tau$}
	\label{tau}
\end{figure}

Let $g(z)=z+\xi$. Consider the analytic function $f:\mathbb{D}\rightarrow\mathbb{D}$ defined by
\[
f(z)=g\circ\tau(z).
\]
Then $f\circ\varphi$ maps the unit disk into the disk $B(0, \epsilon)$ (more precisely, $f(\mathbb{D})$ is contained in the yellow region as following Figure \ref{fimage}.).
\begin{figure}[htbp]
	\centering
	\includegraphics[width=1\textwidth]{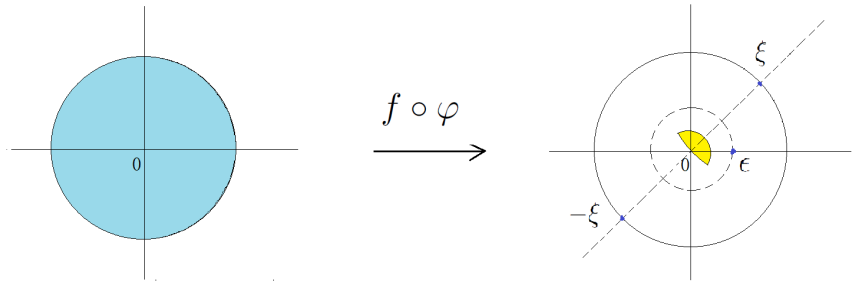}
	\caption{Analytic function $f\circ\varphi$}
	\label{fimage}
\end{figure}

Now set $f_k(z)=(f(z))^k$, for $k=1,2,\ldots$.  We write
\[
F=C_{\varphi}(f)=f\circ\varphi, \ \ \ \ F_k(z)=(F(z))^k
\]
and
\[
C_{\varphi}(f_k(z))=F_k=\sum\limits_{n=0}^{\infty}\widehat{F_k}(n)z^n.
\]
Suppose that $\varphi$ is analytic on a disk $B(0,R)$, for some certain $R>1$. Then each $F_k$ is also analytic on $B(0,R)$.

Since the maximum modulus of $f_k(z)$ on $\mathbb{D}$ is less than $\epsilon^k$, it follows from Cauchy inequality that
\[
|\widehat{F_k}(n)|\le \frac{\epsilon^{k}}{R^n},
\]
and consequently as $k\rightarrow\infty$,
\[
\|C_{\varphi}(f_k)\|_{H^2_{\beta}}=\sqrt{\sum\limits_{n=0}^{\infty}|\widehat{F_k}(n)|^2\beta_n^2} \le\sqrt{\sum\limits_{n=0}^{\infty}\frac{\epsilon^{2k}\beta_n^2}{R^{2n}}}=\epsilon^k\sqrt{\sum\limits_{n=0}^{\infty}\frac{\beta_n^2}{R^{2n}}}\rightarrow0.
\]

Notice that for every $k$,
\[
\|(g(z))^k\|_{H^2_{\beta}}\ge|(g(0))^k|=|\xi^k|=1.
\]
In addition, we have known that $C_{\tau}$ is a bounded invertible operator acting on $H^2_{\beta}$ by Theorem \ref{Mz}, i.e., there exist positive numbers $a$ and $A$ such that
\[
a\|h\|_{H^2_{\beta}}\le \|C_{\tau}(h)\|_{H^2_{\beta}} \le A\|h\|_{H^2_{\beta}}, \ \ \ \ \ \text{for any} \ h\in H^2_{\beta}.
\]
Then for all $k$,
\[
\|f_k\|_{H^2_{\beta}} = \|C_{\tau}((g(z))^k)\|_{H^2_{\beta}} \ge a\|(g(z))^k\|_{H^2_{\beta}} \ge a >0.
\]
This is a contradiction to the inequality (\ref{rest}).
\end{proof}

To continue our study, some results related to $0$-dimensional spaces are required (we refer to \cite{H41}).
\begin{definition}[Definition II 1 in \cite{H41} p.10]
A separable metric space $X$ has dimension $0$ at a point $p$ if $p$ has arbitrarily small neighborhood with empty boundaries. A nonempty space $X$ has $0$-dimension if it has dimension $0$ at each of its points.
\end{definition}

\begin{theorem}\label{0dim}
All spaces considered are separable metric. Then
\begin{enumerate}
\item \ The property of being $0$-dimensional is a topological invariant, i.e., if the space $X$ is homeomorphic to a $0$-dimensional space, then $X$ is also $0$-dimensional. {\rm((A) in \cite{H41} p.10)}
\item \ A nonempty subset of a $0$-dimensional space is $0$-dimensional. {\rm(Theorem II 1 in \cite{H41} p.13)}
\item \ A space which is the sum of countable $0$-dimensional closed subsets is itself $0$-dimensional. {\rm(Theorem II 2 in \cite{H41} p.18)}
\item \ Any set of real numbers containing no interval is $0$-dimensional. {\rm(Example II 4 in \cite{H41} p.11)}
\item \ A connected $0$-dimensional consists of only one point. {\rm((C) in \cite{H41} p.15)}
\end{enumerate}
\end{theorem}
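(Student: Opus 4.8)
The plan is to treat this theorem as a bookkeeping lemma: each of the five items is a classical fact from dimension theory, so the cleanest route is to quote Hurewicz--Wallman \cite{H41} directly, exactly as indicated in the parenthetical references attached to each clause. For a self-contained exposition I would nonetheless recall the one- or two-line arguments behind each item, since they all reduce immediately to the definition of $0$-dimensionality in terms of arbitrarily small neighborhoods with empty boundaries (equivalently, a neighborhood base of relatively clopen sets at each point).

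For (1), observe that the defining property is phrased purely in terms of the topology, so it is preserved by any homeomorphism; this is immediate once one notes that a homeomorphism carries open sets to open sets and boundaries to boundaries. For (2), given a subspace $Y\subseteq X$ and a point $p\in Y$, intersecting a small clopen-in-$X$ neighborhood of $p$ with $Y$ yields a clopen-in-$Y$ neighborhood of $p$, and its boundary in $Y$ can only shrink. For (4), if $S\subseteq\mathbb{R}$ contains no interval then $\mathbb{R}\setminus S$ is dense, so around any $p\in S$ one finds arbitrarily short open intervals $(a,b)$ with $a,b\notin S$; then $(a,b)\cap S$ is relatively clopen in $S$, giving a neighborhood base of clopen sets at $p$. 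For (5), if a $0$-dimensional space $X$ had two distinct points $p\neq q$, one picks (using that $X$ is metric, hence Hausdorff) a clopen neighborhood $U$ of $p$ small enough to exclude $q$; then $U$ and $X\setminus U$ disconnect $X$, contradicting connectedness, so $X$ is a single point.

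The only clause requiring a genuine theorem rather than a one-liner is (3), the countable closed sum theorem for dimension $0$: if $X=\bigcup_{n} F_n$ with each $F_n$ closed and $0$-dimensional, then $X$ is $0$-dimensional. The argument I would reproduce is the standard inductive separation procedure --- given a point $p$ and a small open $V\ni p$, one builds, using normality of $X$, a coherent family of sets that are clopen in the successive $F_n$ and patches them into an open set with empty boundary contained in $V$ --- but since this is precisely Theorem II.2 of \cite{H41}, I would simply cite it. I do not anticipate any obstacle: the whole statement is a curated list of facts from \cite{H41}, assembled here only because the later arguments on when $C_\varphi$ has closed range will need to conclude that certain subsets of $\mathbb{T}$ are $0$-dimensional, hence --- when they are also connected --- singletons.
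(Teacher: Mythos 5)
Your proposal is correct and takes essentially the same approach as the paper: the paper offers no proof of this theorem at all, relying entirely on the parenthetical citations to the indicated items of \cite{H41}, exactly as you suggest. Your supplementary one-line arguments for items (1), (2), (4) and (5) are accurate but superfluous to what the paper does.
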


\begin{theorem}\label{semiFr}
Let $H^2_{\beta}$ be a weighted Hardy space of polynomial growth induced by the weight sequence $\beta=\{\beta_k\}_{k=1}^{\infty}$. Let $\varphi:\overline{\mathbb{D}}\rightarrow\overline{\mathbb{D}}$ be a nontrivial analytic map. Then the followings are equivalent.
\begin{enumerate}
\item \ The composition operator $C_{\varphi}$ is semi-Fredholm.
\item \ The composition operator $C_{\varphi}$ has closed range.
\item \ $\mathbb{T}\subseteq\varphi(\overline{\mathbb{D}})$.
\item \ $\varphi(\mathbb{T})$ contains an arc in $\mathbb{T}$.
\item \ $\varphi$ is a finite Blaschke product.
\end{enumerate}
\end{theorem}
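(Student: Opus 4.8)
The plan is to close the cycle of implications $(5)\Rightarrow(1)\Rightarrow(2)\Rightarrow(3)\Rightarrow(4)\Rightarrow(5)$, three steps of which are essentially free. Since $\varphi$ is nontrivial, the composition operator $C_{\varphi}$ is injective, so $C_{\varphi}$ is semi-Fredholm precisely when it has closed range; this gives $(1)\Leftrightarrow(2)$. The implication $(5)\Rightarrow(1)$ is already recorded in Remark~\ref{BsemiF}: if $\varphi$ is a finite Blaschke product, then $M_{\varphi}C_{\varphi}=C_{\varphi}M_z$ together with Theorem~\ref{MB} shows that $C_{\varphi}$ has closed range, hence (being injective) is semi-Fredholm. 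And $(2)\Rightarrow(3)$ is exactly Proposition~\ref{cr}. Thus the real content lies in $(3)\Rightarrow(4)$ and, above all, $(4)\Rightarrow(5)$.

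For $(3)\Rightarrow(4)$ I would invoke the maximum modulus principle: since $\varphi$ is nonconstant and $\varphi(\overline{\mathbb{D}})\subseteq\overline{\mathbb{D}}$, no interior point can be mapped onto $\mathbb{T}$, so $\varphi(\mathbb{D})\subseteq\mathbb{D}$ and therefore $\varphi(\overline{\mathbb{D}})\cap\mathbb{T}=\varphi(\mathbb{T})\cap\mathbb{T}$. Under $(3)$ this set contains $\mathbb{T}$, hence equals $\mathbb{T}$, which certainly contains an arc. (Equivalently, one may argue by contradiction: $\varphi(\mathbb{T})\cap\mathbb{T}$ is always closed in $\mathbb{T}$, so if it contained no arc it would be $0$-dimensional by parts (2)--(4) of Theorem~\ref{0dim}, contradicting that it is the whole connected circle, part~(5).)

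The heart of the matter is $(4)\Rightarrow(5)$. Assume $\varphi(\mathbb{T})$ contains an arc $\gamma\subseteq\mathbb{T}$, and set $A=\{z\in\mathbb{T}:\ |\varphi(z)|=1\}$, which is closed in $\mathbb{T}$. Choosing for each $\zeta\in\gamma$ a point $z_{\zeta}\in\mathbb{T}$ with $\varphi(z_{\zeta})=\zeta$ (it lies on $\mathbb{T}$ again by the maximum modulus principle), the map $\zeta\mapsto z_{\zeta}$ is injective, so $A$ is uncountable. I would then upgrade this to the assertion that $A=\mathbb{T}$, i.e. $|\varphi|\equiv 1$ on $\mathbb{T}$: writing $\varphi^{*}(z)=\overline{\varphi(\overline{z})}$, which is holomorphic on the neighbourhood $B(0,R)$ of $\overline{\mathbb{D}}$ on which $\varphi$ is analytic, one has $|\varphi(z)|^{2}=\varphi(z)\varphi^{*}(1/z)$ for $z\in\mathbb{T}$, the restriction to $\mathbb{T}$ of a function holomorphic on the annulus $1/R<|z|<R$; hence $1-|\varphi|^{2}$ is real-analytic on $\mathbb{T}$, and having an uncountable (in particular non-discrete) zero set it must vanish identically — equivalently, the Schwarz reflection principle applied across $\mathbb{T}$ propagates $|\varphi|=1$ from any arc in $A$ to all of $\mathbb{T}$. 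Once $|\varphi|\equiv 1$ on $\mathbb{T}$, the classical argument finishes: $\varphi$ has finitely many zeros in $\mathbb{D}$ (none on $\mathbb{T}$, and they cannot accumulate in the compact set $\overline{\mathbb{D}}\subseteq B(0,R)$); letting $B$ be the finite Blaschke product with these zeros, $\varphi/B$ is analytic and zero-free on $\overline{\mathbb{D}}$ with modulus $1$ on $\mathbb{T}$, so applying the maximum modulus principle to both $\varphi/B$ and $B/\varphi$ shows $\varphi/B$ is a unimodular constant and $\varphi$ is a finite Blaschke product. (Here Theorem~\ref{0dim} provides the convenient language for the size dichotomy of the contact set $A$: a closed subset of $\mathbb{T}$ with no arc is $0$-dimensional, hence a proper subset of the connected circle, so the analytic input is exactly what forces $A$ out of that regime.)

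I expect $(4)\Rightarrow(5)$ to be the main obstacle, specifically the passage from the local information ``$|\varphi|=1$ on a set that meets an arc'' to the global identity ``$|\varphi|\equiv 1$ on $\mathbb{T}$'', together with the topological bookkeeping about how thin the contact set $A$ can be; the tools for this are the analyticity of $\varphi$ across $\mathbb{T}$ (Schwarz reflection, equivalently the annulus extension of $|\varphi|^{2}$) and the $0$-dimensionality results of Theorem~\ref{0dim}. The remaining implications are, by contrast, either formal or already supplied by the cited theorems.
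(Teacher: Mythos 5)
Your proposal is correct, and the outer structure of the cycle coincides with the paper's: $(1)\Leftrightarrow(2)$ from injectivity, $(2)\Rightarrow(3)$ from Proposition~\ref{cr}, $(5)\Rightarrow(1)$ from Theorem~\ref{MB} and Remark~\ref{BsemiF}, and $(3)\Rightarrow(4)$ essentially for free (the paper calls it obvious; your maximum-modulus remark that $\varphi(\mathbb{D})\subseteq\mathbb{D}$, so that boundary values of $\mathbb{T}$ must come from $\mathbb{T}$, is the small point actually needed there). Where you genuinely diverge is in $(4)\Rightarrow(5)$. The paper factors $\varphi=BF$ with $B$ a finite Blaschke product and $F$ outer and analytic on $\overline{\mathbb{D}}$, then runs a topological argument with the $0$-dimensionality machinery of Theorem~\ref{0dim} to show that $F^{-1}(J_1)$ must contain an arc of $\mathbb{T}$ (otherwise a genuine arc $J_3$ would be a finite union of $0$-dimensional sets), and finally uses Schwarz reflection across that arc plus Liouville to force $F$ to be a unimodular constant. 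You instead work directly with $\varphi$: the contact set $A=\{z\in\mathbb{T}:|\varphi(z)|=1\}$ is uncountable because it contains an injective copy of the arc $\gamma$, hence has an accumulation point; since $1-\varphi(z)\varphi^{*}(1/z)$ is holomorphic on the annulus $1/R<|z|<R$ and restricts to $1-|\varphi|^{2}$ on $\mathbb{T}$, the identity theorem gives $|\varphi|\equiv1$ on $\mathbb{T}$, and the classical divide-by-the-Blaschke-product argument finishes. This is a cleaner and more elementary route: it bypasses the inner--outer factorization, the entire $0$-dimensionality apparatus (which you correctly observe is not actually needed, only convenient language), and the reflection-to-an-entire-function step, replacing the paper's most delicate point (nonempty interior of $F^{-1}(J_1)$) with the bare fact that an uncountable closed subset of $\mathbb{T}$ is not discrete. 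Both arguments use in the same essential way that $\varphi$ is analytic on a neighbourhood of $\overline{\mathbb{D}}$. The one cosmetic difference is that you prove the stronger intermediate statement $|\varphi|\equiv1$ on all of $\mathbb{T}$ at once, whereas the paper only extracts an arc on which $|F|=1$ before reflecting; your version is tighter and I see no gap in it.
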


\begin{proof}
Obviously, $(1)$ implies $(2)$ and $(3)$ implies $(4)$. By Proposition \ref{cr}, one can see that $(2)$ implies $(3)$. By Theorem \ref{MB}, one can see that $(5)$ implies $(1)$, more precisely, the composition operator $C_{\varphi}$ with a symbol of finite Blaschke product with order $m$ is an injective operator with closed range, and has codimension $1$ if $m=1$, and has infinite codimension if $m>1$.
So it suffices to prove $(4)$ implies $(5)$.

Suppose that $\varphi(\mathbb{T})$ contains an arc in $\mathbb{T}$.
Since $\varphi$ is analytic on $\overline{\mathbb{D}}$, we could write $\varphi=BF$, where $B$ is a finite Blaschke product and $F$ is an outer function which is also analytic on $\overline{\mathbb{D}}$ (see \cite{Gar81} for exmple). Notice that $|B(\xi)|=1$ for all $\xi\in\mathbb{T}$. Then $F(\mathbb{T})$ contains a closed arc, denoted by $J_1$, in $\mathbb{T}$.
Obviously, $F^{-1}(J_1)$ is a closed subset in $\mathbb{T}$.

Now we claim that $F^{-1}(J_1)$ is a closed subset with nonempty interior in $\mathbb{T}$. Suppose that the closed set $F^{-1}(J_1)$  has empty interior in $\mathbb{T}$. Then by Theorem \ref{0dim}, $F^{-1}(J_1)$ is homeomorphic to a set of real numbers containing no interval and hence is $0$-dimensional. Since $F(z)$ is analytic on $\overline{\mathbb{D}}$, $F$ is analytic on a disc $B(0,R)$ for some $R>1$. To avoid confusion, let $F_{R}$ be the analytic function on $B(0,R)$ such that $F_R(z)=F(z)$ for any $z\in\overline{\mathbb{D}}$.
Notice that there is at most finite zero points of $F'_R(z)$(or written as $F'(z)$) on $\mathbb{T}$. Then we could choose a sub-arc $J_2$ of $J_1$ such that $F'(\lambda)\neq0$ for any $\lambda\in F^{-1}(J_2)$. Furthermore, one can select a point $\xi_0\in J_2$ such that
\[
\textrm{Card}\{F^{-1}(\xi_0)\}=\textrm{Card}\{\lambda\in\mathbb{T}; ~F(\lambda)=\xi_0\}=\max\limits_{\xi\in J_2}~\textrm{Card}\{\lambda\in\mathbb{T}; ~F(\lambda)=\xi\}.
\]
Denote $F^{-1}(\xi_0)=\{\lambda_j, j=1,2,\ldots,n_0.\}$, where $n_0=\textrm{Card}\{\lambda\in\mathbb{T}; ~F(\lambda)=\xi_0\}$. Following from $F'_R(\lambda_j)\neq 0$ for $j=1,2,\ldots,n_0$, there exists a neighborhood $V$ of $\xi_0$ and a neighborhood $U_j$ of $\lambda_j$ for all $j$ such that the restriction of $F_R$ on $U_j$ is an analytic homeomorphism to $V$. Denote $J_3$ be a closed arc in $J_2\bigcap V$, and moreover denote $A_j=F^{-1}(J_3)\bigcap U_j$ for $j=1,2,\ldots,n_0$. By Theorem \ref{0dim}, each $A_j$ is $0$-dimensional and consequently
\[
J_3=F_R(F^{-1}(J_3))=F_R(\bigcup\limits_{j=1}^{n_0}A_j)=\bigcup\limits_{j=1}^{n_0}F_R(A_j)
\]
is also $0$-dimensional. However, by $(5)$ in Theorem \ref{0dim}, the arc $J_3$ is not $0$-dimensional. This is a contradiction.

So, $F^{-1}(J_1)$ is a closed subset with nonempty interior in $\mathbb{T}$. Furthermore, there exists an arc $J$ in $F^{-1}(J_1)\subseteq\mathbb{T}$ and then $F(J)=\widetilde{J}$ is a sub-arc in $J_1\subseteq\mathbb{T}$.

In addition, $F(\mathbb{D})$ is contained in $\mathbb{D}$ and does not contain zero. Then by the Schwartz reflection principle, one can obtain an analytic function $\widetilde{F}$ on the domain $\Omega=J\bigcup(\mathbb{C}\setminus\mathbb{T})$, which is an analytic continuation of $F$ on $\mathbb{D}$.

Since $F$ is analytic on $\overline{\mathbb{D}}$ and coincide with $\widetilde{F}$ on $\mathbb{D}$, it follows the uniqueness of analytic functions that $F$ can be analytically extended to be an entire function on $\mathbb{C}$. Moreover, one can see that $F(\overline{\mathbb{D}})$ also does not contain zero. If $0\in F(\overline{\mathbb{D}})$, there exists a sequence of points $z_n\in\mathbb{D}$ tending to a point $\xi\in\mathbb{T}$ such that $F(z_n)\rightarrow 0$. By the Schwartz reflection principle, we have $\widetilde{F}(\frac{1}{\overline{z_n}})\rightarrow \infty$. However, this can not occur because of $\frac{1}{\overline{z_n}}\rightarrow \xi\in\mathbb{T}$. Then, $F$ can be analytically extended to be a bounded entire function, and consequently $F$ must be a constant function with module $1$. Therefore, $\varphi$ is just a finite Blaschke product.
\end{proof}

\begin{theorem}\label{Fr}
Let $H^2_{\beta}$ be a weighted Hardy space of polynomial growth.  Let $\varphi:\overline{\mathbb{D}}\rightarrow\overline{\mathbb{D}}$ be an analytic map. Then the composition operator $C_{\varphi}$ acting on $H^2_{\beta}$ is Fredholm if and only if $\varphi$ is a disk automorphism.
\end{theorem}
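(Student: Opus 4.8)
The plan is to prove the two implications separately, using Theorem~\ref{semiFr} together with the structure results of Section~2. The ``if'' direction is immediate: if $\varphi$ is a disk automorphism, then Theorem~\ref{Mz} says that $C_\varphi$ is an isomorphism of $H^2_\beta$, hence a Fredholm operator (of index $0$). So the content lies in the ``only if'' direction.

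For that direction, suppose $C_\varphi$ is Fredholm. A constant symbol would make $C_\varphi$ of rank one, so that $\textrm{dim}\,\textrm{Ker}(C_\varphi^*)=\infty$, contradicting Fredholmness; hence $\varphi$ is nonconstant and, by the open mapping theorem, $\varphi(\mathbb{D})\subseteq\mathbb{D}$, so $C_\varphi$ is a genuine, injective composition operator on $H^2_\beta$. Since a Fredholm operator has closed range, Theorem~\ref{semiFr} forces $\varphi$ to be a finite Blaschke product; let $m$ denote its order. A finite Blaschke product of order $1$ is precisely a disk automorphism, so it suffices to rule out $m\ge 2$.

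To exclude $m\ge 2$ I would compare Fredholm indices along the intertwining identity $M_\varphi C_\varphi=C_\varphi M_z$ recorded in Remark~\ref{BsemiF}. On $H^2_\beta$ the operator $M_z$ is injective with closed range of codimension one, hence Fredholm of index $-1$; and by Theorem~\ref{MB}, $M_\varphi\sim\bigoplus_{1}^{m}M_z$, so $M_\varphi$ is Fredholm of index $-m$. Assuming $C_\varphi$ Fredholm, all of $C_\varphi$, $M_z$, $M_\varphi$, and the composites $C_\varphi M_z=M_\varphi C_\varphi$ are Fredholm, and additivity of the Fredholm index under composition applied to both sides of the identity gives
\[
\textrm{ind}(C_\varphi)+\textrm{ind}(M_z)=\textrm{ind}(C_\varphi M_z)=\textrm{ind}(M_\varphi C_\varphi)=\textrm{ind}(M_\varphi)+\textrm{ind}(C_\varphi),
\]
i.e. $-1=-m$, whence $m=1$ and $\varphi$ is a disk automorphism. (Equivalently, one can quote directly the observation of Remark~\ref{BsemiF}, that $\textrm{dim}\,\textrm{Ker}(C_\varphi^*)=\infty$ whenever $\varphi$ is a finite Blaschke product of order $m>1$, which already contradicts Fredholmness.)

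The heart of the matter, and the only step needing care, is this last point: that the cokernel of $C_\varphi$ is infinite-dimensional for Blaschke symbols of order $\ge 2$. The index argument dispatches it quickly, provided one checks that every operator entering the computation is honestly Fredholm so that the finite index arithmetic is legitimate — $C_\varphi$ by hypothesis, $M_z$ and $M_\varphi$ as noted above, and the two composites as products of Fredholm operators. A more self-contained alternative is to use the Riesz-basis decomposition behind Theorem~\ref{MB}: with $\{z^j\varphi^n : 0\le j\le m-1,\ n\ge 0\}$ a Riesz basis of $H^2_\beta$ and $\textrm{ran}(C_\varphi)=\overline{\textrm{span}}\{\varphi^n : n\ge 0\}$ closed, the complementary ``$j\ge 1$'' part is an infinite-dimensional closed subspace, so $\textrm{dim}\,\textrm{Ker}(C_\varphi^*)=\infty$ directly, again contradicting Fredholmness unless $m=1$.
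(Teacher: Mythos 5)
Your proof is correct and follows essentially the same route as the paper: reduce to the case of a finite Blaschke product via Theorem~\ref{semiFr}, then use Theorem~\ref{MB} and Remark~\ref{BsemiF} to rule out order $m\ge 2$, with the converse direction coming from Theorem~\ref{Mz}. Your index-additivity computation on $M_\varphi C_\varphi=C_\varphi M_z$ is a clean, self-contained justification of the step the paper simply delegates to Remark~\ref{BsemiF} (namely that $\dim\textrm{Ker}(C_\varphi^*)=\infty$ when $m>1$), but it does not change the overall argument.
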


\begin{proof}
Notice that for a finite Blaschke product $B$ with order $m$, by Theorem \ref{MB} and Remark \ref{BsemiF}, the composition operator $C_{B}$ has finite codimension if and only if the order $m$ equals $1$. Then, by Theorem \ref{semiFr}, the composition operator $C_{\varphi}$ is Fredholm if and only if $\varphi$ is a disk automorphism.
\end{proof}

{\bf Acknowledgement}

The second author was supported by National Natural Science Foundation of China (Grant No. 11831006, 11920101001 and 11771117). The authors
thank Professor Guangfu Cao and Professor Maofa Wang for many valuable discussions on this paper.

\end{document}